\documentclass[reqno]{amsart}
\usepackage[foot]{amsaddr}
\usepackage[utf8]{inputenc}
\usepackage{amsthm}
\usepackage{enumerate}
\usepackage{amsfonts}
\usepackage{amssymb}
\usepackage{amsmath}
\usepackage{mathabx}
\usepackage{color}
\usepackage{wasysym}
\usepackage{tikz-cd}
\usepackage{hyperref}
\usepackage[normalem]{ulem}

\newtheorem{theorem}{Theorem}
\newtheorem{corollary}[theorem]{Corollary}
\newtheorem{lemma}[theorem]{Lemma}
\newtheorem{proposition}[theorem]{Proposition}
\theoremstyle{definition}
\newtheorem{definition}[theorem]{Definition}
\newtheorem{example}[theorem]{Example}
\newtheorem{remark}[theorem]{Remark}

\title[Generalizing shellable complexes and matroids]{A lattice framework for generalizing shellable complexes and matroids}
\author{Rakhi Pratihar$^1$}
\address[1]{Inria Centre de Saclay, Campus Polytechnique, Palaiseau, France}
\author{Tovohery H. Randrianarisoa$^2$ and Klara Stokes$^2$}
\address[2]{Department of Mathematics and Mathematical Statistics, Ume\aa ~University, Ume\aa, Sweden}
\keywords{Shellability, order complexes, matroids, sequential Cohen-Macaulay, multicomplexes, power lattices}
\email{rakhi.pratihar@inria.fr, klara.stokes@umu.se, tovo@aims.ac.za}

\subjclass[2010]{13F55, 05E45, 06C10, 05B35}
\newcommand{\N}{\mathbb{N}}

\renewcommand{\precdot}{\prec\mathrel{\mkern-5mu}\mathrel{\cdot}}
\newcommand{\covered}{\precdot}
\newcommand{\0}{\hat{0}}
\newcommand{\1}{\hat{1}}
\newcommand{\<}{\left<}
\renewcommand{\>}{\right>}
\newcommand{\Fq}{\mathbb{F}_q}
\newcommand{\FF}{\mathbb{F}}
\newcommand{\power}[1]{\mathrm{pow}( #1 )}

\newcommand{\I}{\mathcal{I}}
\newcommand{\B}{\mathcal{B}}

\newcommand{\A}{\mathfrak{A}}

\newcommand{\depth}{\mathrm{depth}}

\newcommand{\PP}{\mathcal{P}}

\DeclareSymbolFont{slenderlargesymbols}{OMX}{ccex}{m}{n}
\DeclareMathSymbol{\prod}{\mathop}{slenderlargesymbols}{"51}
\begin{document}

\begin{abstract}
    We introduce the notion of \emph{power lattices} that unifies and extends the equicardinal geometric lattices, Cartesian products of subspace lattices, and multiset subset lattices, among several others. The notions of shellability for simplicial complexes, $q$-complexes, and multicomplexes are then unified and extended to that of complexes in power lattices, which we name as \emph{$\PP$-complexes}. A nontrivial class of shellable $\PP$-complexes are obtained via $\PP$-complexes of the independent sets of a matroid in power lattice, which we introduce to generalize matroids in Boolean lattices, $q$-matroids in subspace lattices, and sum-matroids in Cartesian products of subspace lattices. We also prove that shellable $\PP$-complexes in a power lattice yield shellable order complexes, extending the celebrated result of shellability of order complexes of (equicardinal) geometric lattices by Bj{\"o}rner and also, a recent result on shellability of order complexes of lexicographically shellable $q$-complexes. Finally, we provide a construction of matroids on the lattice of multiset subsets from weighted graphs. We also consider a variation of Stanley-Reisner rings associated with shellable multicomplexes than the one considered by Herzog and Popescu and proved that these rings are sequentially Cohen-Macaulay.
\end{abstract}

\maketitle
\section{Introduction}

Shellability is an important notion having interesting applications in combinatorics, commutative algebra, and algebraic topology. Historically, it was introduced to prove the Euler-Poincar{\'e} formula for higher-dimensional convex polytopes. The shellability property was first implicitly assumed by Schl{\"a}fli \cite{Sch} in 1852 and was formally established after many decades by Bruggesser and Mani \cite{BM} in 1971.

The original definition of shellability for polyhedral complexes takes the following form for abstract simplicial complexes pioneered by Bj{\"o}rner and Wachs  \cite{Bjo92}. An abstract simplicial complex $\Delta$ is said to
be shellable if it is pure (i.e., all its facets or maximal faces have the same dimension) and there is a linear ordering $F_1, \ldots, F_t$ of its facets such that for each $j = 2, \ldots, t$, the complex
$\<F_j \> \cap \< F_1, \ldots, F_{j-1} \>$ is generated by a nonempty set of maximal proper faces of $F_j$. Here for $i = 1, \ldots, t$, by $\< F_1, \ldots, F_i \>$ we denote the complex generated by
$F_1, \ldots, F_i$, i.e., the smallest simplicial complex containing $F_1, \ldots, F_i$.

 From the topological point of view, a shellable simplicial complex is weak homotopy equivalent to a wedge sum of spheres, thus the homology groups are well understood \cite[\S2.3]{Hatcher}. Shellable simplicial complexes are important in commutative algebra, partly because their ``face rings'' or Stanley-Reisner rings (over any field) are Cohen-Macaulay. These rings were introduced independently by Hochster and Stanley and they have nice properties. Since Gr\"obner degenerations of the coordinate rings of many classes of algebraic varieties turn out to be Stanley-Reisner rings of simplicial complexes, shellability provides a tool to prove their Cohen-Macaulayness. Even if the shellable simplicial complexes are not pure, the associated Stanley-Reisner ring is still sequentially Cohen-Macaulay \cite[Chapter 2, Section 2]{Stan}. The notion of shellability has been generalized to multicomplexes (e.g. \cite[Definition 3.4]{Cim06}). It was also shown that when a pure multicomplex is shellable, the associated topological space is a wedge of spheres \cite[Corollary 3.10]{Cim06}. More recently, $q$-analogues of these results are obtained in \cite{GPR22, GPTVW24}where (multi)complexes are replaced with $q$-complexes, introduced at least by Rota \cite{Rota71} and then by Alder \cite{Alder10}. 
It was shown in \cite{GPTVW24} that when a $q$-complex is shellable, the associated topological space is also a wedge of spheres.

 Motivated by Stanley's work on R-labeling, Bj{\"o}rner introduced the notions of EL-labelling and CL-labelling that give efficient methods for establishing shellability.  Important known classes of shellable simplicial complexes include the boundary complex of a convex polytope \cite{BM}, the order complex of a bounded, locally upper semimodular poset \cite{folkman}, and matroid complexes, i.e., complexes formed by the independent subsets of matroids \cite{Porvan}. Among different constructions, one can obtain shellable multicomplexes and $q$-complexes as independent sets of discrete polymatroids \cite{HH02} and $q$-matroids \cite{GPR22}, respectively. For comprehensive background and the proofs supporting these claims, we refer to the literature, including the books by Stanley \cite{Stan} and Bruns and Herzog \cite{BH}, the compilation of lecture notes in \cite{GSSV}, and Björner's survey article \cite{Bjo92}.


Furthermore, a more general notion of sum-matroids has been introduced in \cite{PPR23}, which is a direct generalization of the notions of matroids and $q$-matroids. Therefore, it is natural to ask if the corresponding ``independent elements'' have a ``shellable'' property and if the associated topological space is also a wedge of spheres. With this primary motivation, we introduce the notion of \emph{power lattices} and give an affirmative answer to this question. Within this lattice, we define a general notion of ``$\PP$-complexes'' and their shellability. We will show that the associated topological space is again a wedge of spheres. Our approach is similar to the one in \cite{GPTVW24} where we show that the order complex associated to a shellable $\PP$-complex is a shellable simplicial complexes, and then we use the homotopy equivalence between the associated topological spaces. 
We obtain non-trivial classes of shellable $\PP$-complexes by considering the $\PP$-complexes of independent sets of matroids in power lattices, that we introduce to generalize the notion of matroids, polymatroids, $q$-matroids. 

In the remaining part of the paper, we focus on the Stanley-Reisner ring associated with multicomplexes.
 It was shown in \cite[Corollary 10.6]{HP06} that the corresponding quotient ring, which we denote by $R(\Gamma)$ is (sequentially) Cohen-Macaulay when $\Gamma$ is a shellable multicomplex. However, as mentioned in \cite[Remark 1.5]{Cim06}, the ideal $I(\Gamma)$ is not the same as the Stanley-Reisner ideal $I_\Gamma$ when the multicomplex is a simplicial complex. In \cite{HP06}, they consider the ideal $I(\Gamma)$ associated with a multicomplex $\Gamma$ where $I(\Gamma)$  is the monomial ideals spanned by all monomials which are not in $\Gamma$. 
To provide an appropriate generalization, we consider a monomial ideal $I_\Gamma$ generated by monomials not in $\Gamma$ but belonging to a power lattice of multiset subsets of some fixed multiset. We justify this choice by showing that $I_\Gamma$ can be obtained by using the same construction of the Stanley-Reisner ring using a section ring of sheaves in \cite{Yuz87}. Our result, therefore, states that the quotient ring $R_\Gamma$ by the ideal $I_\Gamma$ is sequentially Cohen-Macaulay whenever $\Gamma$ is a shellable multicomplex. 
Since, in general, a shellable multicomplex can be obtained from the independent elements of a matroid on the lattice of multiset subsets, we show how we can construct such matroids from weighted graphs. This construction can be seen as a generalization of graphic matroids.
    
The rest of this paper is organized as follows. In Section \ref{sec:powerlattices}, we introduce the notion of \emph{power lattices} and study their basic properties.
We describe some classes of lattices that it generalizes including the \emph{equicardinal geometric lattices} and the \emph{lattices of multiset subsets}. In Section \ref{sec:complexesinpowerlattices}, we introduce the notion of complexes in power lattices that generalizes the notions of simplicial complexes, $q$-complexes and multicomplexes. In this section, we also introduce the notion of shellability for complexes in a power lattice that extends and generalizes the existing notions of shellability for the aforementioned complexes. The main result of this section is the shellability of order complexes of a shellable complex in a power lattice. Then, we introduce the notion of matroids in power lattices in Section \ref{sec:matroidsinpowerlattices} and prove that the complexes induced by them are shellable complexes in power lattices. We construct a matroid in the power lattice of multiset subsets from weighted graphs. Finally, in Section \ref{sec:stanleyreisnerrings}, we associate Stanley-Reisner rings to multicomplexes, and we establish the sequentially Cohen-Macaulayness of the Stanley-Reisner rings of shellable multicomplexes.



\section{Power lattices}\label{sec:powerlattices}

In this section, we start with recalling basic notions and definitions regarding finite posets. Then we introduce power lattices that generalize and extend the equicardinal geometric lattices, subset lattices, subspace lattices and lattices of multiset subsets, among others. We give a total order on the elements of the same rank of a power lattice. This total order will be useful to study the complexes associated to the lattice in the subsequent sections.
\begin{definition}[Posets]
A partially ordered set or poset is a set $P$ with a binary relation $\preceq$ defined on its elements, which satisfies 
\begin{enumerate}[P1:]
    \item (Reflexive) for all $x \in P$, $x \preceq x$,

    \item (Transitive) if $x \preceq y$ and $y \preceq z$, then $x \preceq z$,

    \item (Antisymmetric) if $x \preceq y$ and $y \preceq x$, then $x = y$.
\end{enumerate}
Throughout the text, we will use $(P, \preceq)$ to denote a poset. If the relation $\preceq$ is clear from the context, we simply use $P$ omitting the symbol $\preceq$.
\end{definition}

The symbol $\preceq$ reads ``precedes" or ``contained in" or ``is less than or equal to". For $x,y\in P$, if $x\preceq y$ and $x\neq y$, then one writes $x \prec y$. If $x\prec y$ and there exists no $z\in P$ such that $x\prec z\prec y$, then one writes $x\covered y$ that reads $y$ covers $x$ (or $x$ is covered by $y$). A poset $P$ is said to have a lower (resp. upper) bound $x\in P$ if $\forall y\in P$, $x\preceq y$ (resp. $y\preceq x$). A poset is \emph{bounded} if it admits both an upper and a lower bound (which must be unique). In a bounded poset, the unique lower (resp. upper) bound is denoted by $\0$ (resp. $\1$). A \emph{subposet} $(S,\preceq)$ of $(P,\preceq)$ is a subset $S\subset P$ with the partial order in $S$ induced from the partial order $\preceq$ on $P$.
 

\begin{definition}
Let $x,y$ be elements of the poset $(P,\preceq)$. The \emph{join} of $x$ and $y$ (if it exists) is an element $z\in P$ such that $x, y \preceq z$ and every $u\in P$ with $x, \, y \preceq u$ satisfies $z\preceq u$. The join of $x$ and $y$ is denoted by $x\vee y$. The \emph{meet} of $x$ and $y$ (if it exists) is an element $z\in P$ such that $z\preceq x, \, y$ and every $u\in P$ with $u\preceq x, \,y$  satisfies $u\preceq z$. The meet of $x$ and $y$ is denoted by $x\wedge y$.

    
\end{definition}
By definition, meet and join of two elements in a poset are unique.
The meet and join satisfy the following properties.
\begin{equation}\label{eq:5}
x\preceq y \Longleftrightarrow x\vee y = y \Longleftrightarrow x\wedge y = x.    
\end{equation}

\begin{definition}[Lattices]
 A \emph{join-semilattice} (resp. \emph{meet-semilattice}) is a poset $P$ in which every pair of elements admits a join (resp. meet). A poset is called a \emph{lattice} if it is both a join-semilattice and a meet-semilattice.
\end{definition}

\begin{definition}[Ranked lattices]\label{def:rankedlattice}
A rank function on a poset $(P, \preceq)$ is a non-negative integer valued function $\rho: P\rightarrow \N$ which satisfies 
\begin{enumerate}[(i)]
    \item if $x \prec y\in P$, then $ \rho(x)<\rho(y)$,
    \item if $x\covered y$, then $\rho(y)=\rho(x)+1$.
\end{enumerate}
A lattice $(P, \preceq)$ with a rank function $\rho$ is called a \emph{graded} or \emph{ranked} lattice, often denoted as $(P, \preceq, \rho)$ or simply as $(P, \rho)$. We denote by $P(l)$ the elements of $P$ of rank $l$.
\end{definition}

For a bounded ranked lattice $P$, the standard convention is to take $\rho(\0)=0$ and the rank of the lattice $P$ is defined to be $\rho(P):=\rho(\1)$. 

\begin{definition}[Semimodular lattice]
A semimodular lattice is a ranked lattice $(P, \rho)$ such that for all $x,y\in P$, 
\begin{equation}\label{eq:semi}
    \rho(x\vee y)+\rho(x\wedge y)\leq \rho(x)+\rho(y).
\end{equation}
If equality holds in \eqref{eq:semi}, the lattice is called modular.
\end{definition}
The elements of $P(1)$ of a ranked lattice $(P, \rho)$ are called atoms of $P$. These elements cover $\0$. We introduce the notion of multiplicity of an element of $P$ with respect to an atom.
    

\begin{definition}[Multiplicity in a lattice]
Let $(P, \rho)$ be a ranked lattice. For any element $x\in P$, we describe the set of all atoms preceding $x$ by
$${A}(x):=\{w\in P(1)\colon w\preceq x\}.$$
In the case $A(x)=\{w\}$, we call $x\in P$ a \emph{power} of the atom $w$. The set of all powers of an atom $w$ is denoted by $\power{w}:=\{ y\in P\colon A(y) = w\}$. In this case $v_w(y):=\rho(y)$ is called the \emph{$w$-multiplicity} of $y$ (or the valuation of $y$ at $w$). 
    
    For an atom $w\preceq y$, the \emph{$w$-multiplicity} of $y$ (or the valuation of $y$ at $w$) is defined as
    \[
     v_w(y):=\max\{ v_w(z)\colon z\preceq y\text{ and } z\in \power{w} \}.
    \]
    If an atom $w\npreceq y$, then we define $v_w(y)=0$.
\end{definition}

\begin{definition}\label{def:powerlattice}
A power lattice is a finite, semimodular lattice $(P, \rho)$ which satisfies the following properties:
\begin{enumerate}[{\em (i)}]
    \item For any $w \in P(1)$ and integer $r$, there exists at most one element in $\power{w}$ of rank $r$.
    \item\label{four} For any $x,y \in P$, $\sum\limits_{w\in P(1)}v_w(x) = \sum\limits_{w\in P(1)}v_w(y)\Longleftrightarrow\rho(x) = \rho(y)$.
\end{enumerate}  
\end{definition}

Since by definition, a power lattice is finite, it is bounded. If $y$ is a power of $x$, for some atom $x$, then we write $y = x^{\rho(y)} = x^{v_x(y)}$. 

\begin{lemma}\label{lem:iniqualityvaluation}
Given a power lattice $P$,
    if $x\preceq y$, then $v_w(x)\leq v_w(y)$ for all $w \in P(1)$. 
\end{lemma}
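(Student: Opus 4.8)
The plan is to argue directly from the definition of the valuation functions, distinguishing two cases according to whether the atom $w$ sits below $x$. No structural hypothesis on $P$ beyond being a ranked lattice is actually needed, so none of the axioms in Definition~\ref{def:powerlattice} will enter.

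First I would dispose of the trivial case $w \npreceq x$: here $v_w(x) = 0$ by definition, and since $v_w$ takes values in $\N$ we get $v_w(x) = 0 \le v_w(y)$ regardless of whether $w \preceq y$.

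The substantive case is $w \preceq x$. Then transitivity of $\preceq$ (axiom P2), applied to $w \preceq x \preceq y$, yields $w \preceq y$, so both $v_w(x)$ and $v_w(y)$ are given by the ``maximum over powers'' formula rather than by the degenerate value $0$. The key observation is that the two index sets are nested: if $z \in \power{w}$ and $z \preceq x$, then $z \preceq y$ by transitivity, whence
\[
\{\, z \in \power{w} : z \preceq x \,\} \;\subseteq\; \{\, z \in \power{w} : z \preceq y \,\}.
\]
Both sets are nonempty, since $w$ itself lies in $\power{w}$ (its only atom below it is $w$ itself) and $w \preceq x \preceq y$; hence the maxima defining $v_w(x)$ and $v_w(y)$ are taken over nonempty sets of non-negative integers. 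The maximum over the smaller set is at most the maximum over the larger one, which is exactly the inequality $v_w(x) \le v_w(y)$.

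I do not expect any genuine obstacle here. The only two points that deserve a line of care are checking that $w \preceq y$ (so that $v_w(y)$ is not the degenerate $0$, which is needed for the comparison of maxima to make sense) and checking that the index sets are nonempty so that the maxima are well defined; both are immediate from $w \in \power{w}$ and transitivity.
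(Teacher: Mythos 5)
Your argument is correct and follows essentially the same route as the paper's: the paper takes the witness power $w^{v_w(x)} \preceq x \preceq y$ and concludes $v_w(x) \le v_w(y)$ from the maximum defining $v_w(y)$, which is exactly your observation that the index set of powers of $w$ below $x$ is contained in the one below $y$. Your version is a bit more explicit, separating out the degenerate case $w \npreceq x$ and checking nonemptiness of the index sets, but the underlying idea is identical.
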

\begin{proof}
    Suppose $x\preceq y$, then for an atom $w$, $w^{v_w(x)}\preceq x\preceq y$ and therefore $v_w(x)\leq v_w(y)$. 
\end{proof}
Later we will see that the converse of this lemma is also true.

\begin{example}\

\begin{minipage}{0.5\textwidth}
    The example on the right provides a finite ranked lattice $(P,\rho)$ which is semimodular. Moreover, $3$ is the only element of rank 2 which is power of the atom $6$. Notice that $\rho(3) = \rho(2) = 2$, but $A(3) = \{6\}$ and $A(2) = \{4, 5,6 \}$. Thus the total valuation of 3 is $\sum\limits_{w\in P(1)} v_w(3) = 2$, whereas $\sum\limits v_w(2) = 3$. Hence it does not satisfy Definition \ref{def:powerlattice} $(ii)$ and thus $P$ is not a power lattice.
\end{minipage}
\hfill
\begin{minipage}{0.5\textwidth}
        \centering
        \begin{tikzpicture}
    \matrix (A) [matrix of nodes, row sep=1.2cm]
    { 
        &&$\1$&&\\  
	& $2$ & & $3$\\
	$4$ & & $5$ & & $6$\\
	&&$\0$&&\\
    };
    \draw (A-1-3)--(A-2-2);
    \draw (A-1-3)--(A-2-4);
    \draw (A-2-2)--(A-3-1);
    \draw (A-2-2)--(A-3-3);
    \draw (A-2-2)--(A-3-5);
    \draw (A-2-4)--(A-3-5);
    \draw (A-3-1)--(A-4-3);
    \draw (A-3-3)--(A-4-3);
    \draw (A-3-5)--(A-4-3);
\end{tikzpicture}
\end{minipage}
\end{example}

Following the non-example, now we give a list of some lattices which are included in the class of power lattices.
\begin{example}\label{exa:1}

\begin{enumerate}[(1)]
    \item For a finite set $A$, the Boolean lattice $2^A$ of all subsets of $A$ is a power lattice $(2^A,\subseteq)$, where $\vee$ is the union of two sets and $\wedge$ is the intersection. The rank $\rho$ is given by the size of subsets. For any subset $U\in 2^A$, $v_{\{x\}}(U)=1$ if $x\in U$ and $v_{\{x\}}(U)=0$, otherwise. Thus for any atom $w$, the power set of $w$ is $\power{w} = \{ w\}$. Also, the condition of equal rank elements having equal cardinality in Definition \ref{def:powerlattice} $(ii)$ is satisfied from the definition of the rank function.

    \item Let $(\Sigma(\Fq^n), \subseteq)$ be the lattice of $\Fq$-subspaces of the $n$-dimensional vector space $\Fq^n$ over $\Fq$, where $\vee$ is the sum of two spaces and $\wedge$ is the intersection. Then $(\Sigma(\Fq^n),\subseteq)$ is power lattice with the rank function $\rho$ given by dimension. Here the atoms are the one-dimensional subspaces. For $U\in \Sigma(\Fq^n)$, $v_{\<x\>}(U)=1$, if $x\in U$ and $v_{\<x\>}(U)=0$, otherwise. The conditions $(i)$ and $(ii)$ of Definition \ref{def:powerlattice} are satisfied for similar reasons as in the previous example.

    \item Let $n$ be an integer. The set $div(n)$ of all divisors of $n$ forms a power lattice $(div(n),|)$ where $a|b$ if a divides $b$, $\vee$ is the least common multiple and $\wedge$ is  greatest common divisor of two numbers. The rank function $\rho$ is the number of prime factors counted with multiplicity. If $x\in div(n)$ and $x=p_1^{a_1}\dots p_t^{a_t}$ is the prime factorization of $x$, then $v_{p_i}(x)=a_i$, if $p$ doesn't divide $x$, then $v_{p}(x)=0$. This is ``equivalent'' to the lattice of multiset we define next and which we will also see in a later section.
    \item Let $M$ be a multiset. The set $2^M$ of all multiset subsets of $M$ forms a power lattice $(2^M,\subseteq)$ where $\vee$ is the multiset union of two sets and $\wedge$ is the intersection. The rank function $\rho$ is defined by the size of subsets (with multiplicity). Let $U\in 2^M$, if $x\in U$, then $v_{\{x\}}(U)$ is the multiplicity of $x$ in $U$ and if $x\notin U$, then $v_{\{x\}}(U)=0$. 
    \item For $i=1,\dots,l$, let $n_i$ be an integer. The Cartesian product of lattices $\Sigma(\Fq^{n_1})\times \dots\times \Sigma(\Fq^{n_l})$ is a power lattice. $\vee$ is defined by the componentwise sums of subspaces whereas $\wedge$ is defined by the componentwise intersections. The rank function $\rho$ is the sum of the dimension of the components. Let $U=(U_1,\dots,U_l)$ and let $u=(\langle 0\rangle,\dots,\langle 0\rangle,\langle x\rangle, \langle 0\rangle, \dots,\langle 0\rangle)$ where $x\neq 0$ and $\<x\>$ is at the position $i$  for some $1\leq i\leq l$. If  $x\in U_i$ then $v_{u}(U)=1$. Otherwise, $v_{u}(U)=0$.
    \item  If $G$ is a cyclic group, then the lattice of subgroups of $G$ forms a power lattice $L(G)$ where $\vee$ is given by the subgroup generated by the union of two subgroups and $\wedge$ is the intersection. Let $H\in L(G)$ such that $|H| = \prod p_i^{l_i}$, where $p_i$'s are pairwise coprime. Then $\rho(H)=\sum_i l_i$. If $C\in L(H)$ with $|C|=p_i$, then $v_{C}(H)=l_i$, otherwise if $C\notin L(H)$, then $v_{C}(H)=0$.
    \item  Let $G$ be a finite abelian group, not necessarily cyclic. By the fundamental theorem of finite abelian groups, $G$ can be expressed as the direct sum of cyclic groups of prime-power order say $G=\prod_{i=1}^s G_i$. So the lattice of subgroups of $G$ can be defined as the Cartesian product of lattices $L(G) = \prod_{i=1}^s L(G_i)$. Similarly to the Cartesian product of lattice of subspaces, we also get a power  lattice of subgroups of abelian groups.

    \item
    Equicardinal geometric lattices are power lattices. Indeed, a geometric lattice is equicardinal if its elements of equal rank have equal cardinality. Note that, the atomistic property of a geometric lattice implies that the valuation of an element $x$ w.r.t. any atom $w$ is $v_w(x) = 1$ if $w \in A(x)$ and 0, otherwise. Thus the property $(ii)$ of Definition \ref{def:powerlattice} becomes equivalent to the property of the same rank elements having the same cardinality. The other properties follows from the definition of geometric lattices.


\end{enumerate}
\end{example}

\begin{remark}
\begin{enumerate}[(a)]
    \item Following the characterization of geometric lattices as lattices of flats of simple matroids, the class of equicardinal geometric (semi)lattices include the equicardinal matroids. For more on this topic, one can refer to, e.g., the article \cite{ABS91} by Alon, Babai, and Suzuki and the references mentioned there.

    \item  
    { There exists a solvable non-abelian group $G$ whose subgroup lattice do not satisfy the property (i) in the definition of a power lattice. For example, take the quaternion group $Q_8=\{\pm 1, \pm i, \pm j,\pm k\}$. Then $\{-1,1,i,-i\}$ and $\{-1,1,j,-j\}$ are two powers of the atom $\{-1,1\}$ and they have the same order.}

\end{enumerate}
    
\end{remark}

\begin{lemma}\label{lem:valuation}
    Let $P$ be a ranked lattice with $p \in P(1)$. Then for $x, \, y \in P$, 
    \begin{enumerate}[(1)]
        \item $v_p(x\wedge y)= \min\{v_p(x),v_p(y)\}$,
        \item $v_p(x\vee y)\geq \max\{v_p(x),v_p(y)\}$.
    \end{enumerate}
\end{lemma}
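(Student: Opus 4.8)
The plan is to reduce both parts to monotonicity of the valuation $v_p$, together with the observation that, below any power of $p$, the powers of $p$ form an initial segment of a maximal chain and are unique in each rank. This last (uniqueness) input is precisely axiom (i) of a power lattice; in fact part (1) genuinely fails without it — for a ranked (even modular) lattice with one atom $p$ and two incomparable rank-two elements $a,b$ (both necessarily powers of $p$) one has $a\wedge b=p$, so $v_p(a)=v_p(b)=2$ but $v_p(a\wedge b)=v_p(p)=1$ — so I read $P$ here as a power lattice, or at least as a ranked lattice satisfying property (i) of Definition \ref{def:powerlattice}.

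First I would record that $v_p$ is order-preserving: $a\preceq b$ implies $v_p(a)\leq v_p(b)$. This is Lemma \ref{lem:iniqualityvaluation}; its proof needs only that the maximum defining $v_p(a)$ is attained by some $z\in\power{p}$ with $z\preceq a$, and such a $z$ then also satisfies $z\preceq b$. Applying this to $x\preceq x\vee y$ and $y\preceq x\vee y$ gives $v_p(x\vee y)\geq\max\{v_p(x),v_p(y)\}$, which is part (2); applying it to $x\wedge y\preceq x$ and $x\wedge y\preceq y$ gives the inequality ``$\leq$'' in part (1). So the only thing left to prove is $v_p(x\wedge y)\geq\min\{v_p(x),v_p(y)\}$.

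For this, assume without loss of generality $v_p(x)\leq v_p(y)$ and put $m:=v_p(x)$; if $p\npreceq x$ then $m=0$ and the bound is trivial, so assume $p\preceq x$. The maximum defining $v_p(x)$ is attained, so fix $z_0\in\power{p}$ with $z_0\preceq x$ and $\rho(z_0)=m$, and likewise fix $z_1\in\power{p}$ with $z_1\preceq y$ and $\rho(z_1)=v_p(y)\geq m$. The crux is to show $z_0\preceq z_1$: choose a maximal chain $\0=c_0\covered c_1\covered\cdots\covered c_{v_p(y)}=z_1$ in the finite ranked lattice $P$. For each $i\geq 1$ we have $A(c_i)\subseteq A(z_1)=\{p\}$ by monotonicity of $A$, while the atom $c_1$ lies below $c_i$, forcing $A(c_i)=\{p\}$, i.e. $c_i\in\power{p}$ with $\rho(c_i)=i$. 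In particular $c_m\in\power{p}$ has rank $m$, so property (i) of Definition \ref{def:powerlattice} yields $c_m=z_0$, whence $z_0=c_m\preceq c_{v_p(y)}=z_1\preceq y$. Combined with $z_0\preceq x$ this gives $z_0\preceq x\wedge y$ by the definition of meet, and therefore $v_p(x\wedge y)\geq\rho(z_0)=m=\min\{v_p(x),v_p(y)\}$, as desired.

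The one genuine obstacle is the identification $c_m=z_0$, i.e. matching the witnessing power sitting under $x$ with the rank-$m$ power extracted from a maximal chain under $y$; recognizing that this is exactly where (and why) the uniqueness-of-powers axiom is indispensable is the heart of the argument, and everything else is routine bookkeeping with monotonicity and the definition of meet.
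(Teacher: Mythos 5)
Your proof is correct, and in its overall strategy (one direction by monotonicity, the other by exhibiting a power of $p$ below both $x$ and $y$) it matches the paper's, but it is substantially more careful in two respects that deserve highlighting. First, you correctly observe that the statement is false for general ranked lattices: your four-element counterexample (a unique atom $p$ covered by two rank-$2$ elements $a,b$ with $a\wedge b=p$) satisfies the ranked-lattice axioms yet violates part (1), so the hypothesis must be read as a power lattice, or at least a ranked lattice satisfying property (i) of Definition \ref{def:powerlattice} --- the paper's proof also silently uses this, since the notation $p^{m}$ for the rank-$m$ power of $p$ is only well-defined under that axiom. Second, the paper's proof simply asserts $p^{\min\{v_p(x),v_p(y)\}}\preceq x$ and $p^{\min\{v_p(x),v_p(y)\}}\preceq y$; this is not immediate from the definitions, because it requires knowing that the powers of $p$ below $p^{v_p(y)}$ form a chain with one element of every rank up to $v_p(y)$, a fact the paper only establishes afterwards in the unnamed chain lemma closing Section \ref{sec:powerlattices}. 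Your maximal-chain argument --- showing that every $c_i$ with $1\le i\le v_p(y)$ in a maximal chain from $\0$ to $z_1=p^{v_p(y)}$ has $A(c_i)=\{p\}$, hence lies in $\power{p}$, hence $c_m=z_0$ by uniqueness --- supplies exactly the missing justification and keeps the proof self-contained and in the right logical order.
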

\begin{proof}
    Since $x\wedge y\preceq x$ and $x\wedge y\preceq y$, we have $v_p(x\wedge y)\leq v_p(x)$ and $v_p(x\wedge y)\leq v_p(y)$. Therefore $v_p(x\wedge y)\leq \min\{v_p(x),v_p(y)\}$. Now, $p^{\min\{v_p(x),v_p(y)\}}\preceq x$ and $p^{\min\{v_p(x),v_p(y)\}}\preceq y$. Therefore $p^{\min\{v_p(x),v_p(y)\}}\preceq x\wedge y$. Hence $\min\{v_p(x),v_p(y)\}\leq v_p(x\wedge y)$. Thus $\min\{v_p(x),v_p(y)\}= v_p(x\wedge y)$.
    For the second point, $p^{v_p(x\vee y)}\preceq x$ and $p^{v_p(x\vee y)}\preceq x$. Hence $v_p(x\vee y)\geq v_p(x)$ and $v_p(x\vee y)\geq v_p(x)$.
\end{proof}
\begin{remark}
    Notice that in the second point of Lemma \ref{lem:valuation}, strict inequality can happen. For example if we have the lattice of subspaces, the sum of two subspaces can produce a new vector which was not in the two original subspaces.
\end{remark}


The rest of the section deals with defining a total order on the elements of same rank of a power lattice. For that, first we will introduce the notion of factorization of an element of $P$ into atoms. We consider an arbitrary but fixed total order $\preceq_1$ on $P(1)$ i.e., the atoms of $P$. 
\begin{definition}[Factorization]
Let $x\in P$ and $A(x)=\{x_1,\dots,x_l\}$ such that $x_1 \prec_1 x_{2} \prec_1 \cdots \prec_1 x_l$. Then the factorization of $x$ is defined as
\[
F(x)= (\overbrace{x_1,\dots,x_1}^{v_{x_1}(x)\text{ times }}, \dots, \overbrace{x_l,\dots,x_l}^{v_{x_l}(x)\text{ times }}).
\]
The length of the factorization of $x$ is defined as $\sum\limits_{x_i \in A(x)} v_{x_i}(x)$. For brevity, we will denote $F(x)$ as
\[
F(x) = \prod_{i=1}^l x_i^{v_{x_i}(x)}.
\]
\end{definition}
 
\begin{remark}\label{rem:1}
Because of the property \eqref{four} in Definition \ref{def:powerlattice} of a power lattice, the factorizations of two elements of the same rank must have the same length i.e., the cardinalities are the same when counted with multiplicity.
\end{remark}

We show next that factorization uniquely defines an element of a power lattice.
\begin{lemma}\label{lemma1}
Let $(P, \preceq)$ be a power lattice. For $x \in P$, if $F(x) = \prod_{i=1}^l x_i^{v_i}$, then $x = \bigvee\limits_{i=1}^l x_i^{v_i}$. Moreover, if $x,y \in P$ such that $x\neq y$, then $F(x)\neq F(y)$.
\end{lemma}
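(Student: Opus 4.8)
The plan is to prove the two assertions in turn. For the first, set $z := \bigvee_{i=1}^l x_i^{v_i}$. Since each $x_i^{v_i} \preceq x$ (by definition of $v_{x_i}(x)$ and the fact that $x_i^{v_i}$ is the unique element of $\power{x_i}$ of rank $v_i = v_{x_i}(x)$), the join $z$ satisfies $z \preceq x$. So it suffices to show $\rho(z) = \rho(x)$, because then $z \prec x$ is impossible and hence $z = x$. By property \eqref{four} of Definition \ref{def:powerlattice}, $\rho(z) = \rho(x)$ is equivalent to $\sum_{w \in P(1)} v_w(z) = \sum_{w \in P(1)} v_w(x)$, so I want to compute the total valuation of $z$. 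On the one hand, $z \preceq x$ gives $v_w(z) \le v_w(x)$ for every atom $w$ by Lemma \ref{lem:iniqualityvaluation}. On the other hand, for each index $j$ we have $x_j^{v_j} \preceq z$, so $v_{x_j}(z) \ge v_j = v_{x_j}(x)$; and if $w \notin A(x)$ then $v_w(x) = 0 \ge v_w(z)$ forces $v_w(z) = 0$. Combining, $v_w(z) = v_w(x)$ for all $w \in P(1)$, hence the total valuations agree, hence $\rho(z) = \rho(x)$, hence $z = x$.

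For the second assertion, suppose $x \neq y$; I must show $F(x) \neq F(y)$. The factorization $F(x)$ records precisely the data of the function $w \mapsto v_w(x)$ on the atoms (the support being $A(x)$, and the multiplicity of $w$ in the tuple being $v_w(x)$), and likewise for $y$. So $F(x) = F(y)$ would mean $v_w(x) = v_w(y)$ for every atom $w \in P(1)$. Then, by the first part of the lemma, $x = \bigvee_{i} x_i^{v_{x_i}(x)} = \bigvee_i x_i^{v_{x_i}(y)} = y$ (the families of atoms and exponents being literally the same), contradicting $x \neq y$. Hence $F(x) \neq F(y)$.

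I expect the main (though modest) obstacle to be the verification that $\rho(z) = \rho(x)$, i.e.\ pinning down $v_w(z)$ for every atom $w$ — in particular the step $v_w(z) = 0$ for $w \notin A(x)$, which uses Lemma \ref{lem:iniqualityvaluation} together with $v_w(x)=0$, and the step $v_{x_j}(z) \ge v_{x_j}(x)$, which uses that $x_j^{v_j} \preceq z$ and the uniqueness of powers of a given rank (Definition \ref{def:powerlattice}(i)). Everything else is a direct unwinding of definitions; no delicate estimates or case analysis are needed beyond this.
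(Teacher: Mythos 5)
Your proposal is correct and follows essentially the same route as the paper: show $z := \bigvee_i x_i^{v_i} \preceq x$, establish $v_w(z) = v_w(x)$ for every atom $w$ (via monotonicity of the valuation in both directions and the observation that $v_w(x)=0$ forces $v_w(z)=0$ off the support), invoke property (ii) of Definition \ref{def:powerlattice} to get $\rho(z)=\rho(x)$, and conclude $z=x$; the second assertion then follows from the first. The only cosmetic difference is that you treat the atoms outside $A(x)$ directly, whereas the paper packages the same observation as the set equality $A(z)=A(x)$.
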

\begin{proof}
Suppose that $F(x) = \prod_{i=1}^l x_i^{v_i}$. We know that $v_i=v_{x_i}(x)$ and from the definition of $v_{x_i}(x)$, $x_i^{v_i} \preceq x$ and therefore, by definition of join, $\bigvee\limits_{i=1}^l x_i^{v_i} \preceq x$. By Lemma \ref{lem:iniqualityvaluation}, $v_{x_i}\left(\bigvee\limits_{i=1}^l x_i^{v_i}\right)\leq v_{x_i}(x)$. On the other side, since $x_i^{v_i} \preceq \bigvee\limits_{i=1}^l x_i^{v_i}$ and $v_i=v_{x_i}(x)$, by Lemma \ref{lem:iniqualityvaluation}, $v_{x_i}(x) \leq v_{x_i}\left(\bigvee\limits_{i=1}^l x_i^{v_i}\right)$. Hence $v_{x_i}(x) = v_{x_i}\left(\bigvee\limits_{i=1}^l x_i^{v_i}\right)$. It is also clear that $A(x)\subseteq A\left(\bigvee\limits_{i=1}^l x_i^{v_i}\right)$. Since $\bigvee\limits_{i=1}^l x_i^{v_i} \preceq x$, then $ A\left(\bigvee\limits_{i=1}^l x_i^{v_i}\right) \subseteq A(x)$. Thus $A(x)= A\left(\bigvee\limits_{i=1}^l x_i^{v_i}\right)$.
Therefore $\sum\limits_{w\in P(1)}v_w(x) = \sum\limits_{w\in P(1)}v_w\left(\bigvee\limits_{i=1}^l x_i^{v_i}\right)$.
By the property \eqref{four} of a power lattice, this implies that $\rho(x) = \rho\left(\bigvee\limits_{i=1}^l x_i^{v_i}\right)$. Since $\bigvee\limits_{i=1}^l x_i^{v_i} \preceq x$, we must have $\bigvee\limits_{i=1}^l x_i^{v_i} = x$.

For the second part, if $F(x)= F(y)=\prod_{i=1}^l x_i^{v_{x_i}(x)}$, then the first part of the lemma says that $x=y=\bigvee\limits_{i=1}^l x_i^{v_{x_i}(x)}$.
\end{proof}


Now, we can show that the converse of Lemma \ref{lem:iniqualityvaluation} is also true.

\begin{corollary}
Let $(P,\preceq)$ be a power lattice. For $x,y\in P$, $x\preceq y$ if and only if $v_w(x)\leq v_w(y)$ for all $w\in P(1)$ (equivalently, $F(x) \subseteq F(y)$).
\end{corollary}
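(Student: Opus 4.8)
The plan is to prove both directions, noting that the forward implication is exactly Lemma \ref{lem:iniqualityvaluation} and the parenthetical equivalence $v_w(x)\le v_w(y)$ for all $w$ iff $F(x)\subseteq F(y)$ is immediate from the definition of factorization (the multiset $F(x)$ records each atom $w$ with multiplicity $v_w(x)$). So the real content is the converse: if $v_w(x)\le v_w(y)$ for all atoms $w$, then $x\preceq y$.

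First I would invoke Lemma \ref{lemma1} to write $x=\bigvee_{i=1}^l x_i^{v_i}$ where $F(x)=\prod_{i=1}^l x_i^{v_i}$, i.e. $v_i=v_{x_i}(x)$ and $\{x_1,\dots,x_l\}=A(x)$. To show $x\preceq y$ it then suffices, by the definition of join, to show that each $x_i^{v_i}\preceq y$. By hypothesis $v_{x_i}(y)\ge v_{x_i}(x)=v_i$, and since $v_{x_i}(y)=\max\{v_{x_i}(z)\colon z\preceq y,\ z\in\power{x_i}\}$, there is some power $z\in\power{x_i}$ with $z\preceq y$ and $v_{x_i}(z)=v_{x_i}(y)\ge v_i$. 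By power lattice property (i) there is at most one element of $\power{x_i}$ of each rank, so the powers of $x_i$ are totally ordered by rank; in particular $x_i^{v_i}$ is the unique power of rank $v_i$ and $x_i^{v_i}\preceq z$ (since $z$ is a power of $x_i$ of rank $\ge v_i$ — here I would note that any two powers of the same atom are comparable, because $x_i^{a}$ and $x_i^{b}$ for $a\le b$ satisfy $x_i^a\preceq x_i^b$; this follows because $x_i^a = x_i^a \wedge x_i^b$ has $v_{x_i}$-valuation $\min\{a,b\}=a$ and all other valuations $0$, combined with Lemma \ref{lemma1}, or more simply from the fact that $\power{x_i}$ with a single element per rank forced to be a chain by semimodularity). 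Hence $x_i^{v_i}\preceq z\preceq y$, giving $x=\bigvee_i x_i^{v_i}\preceq y$.

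The step I expect to be the main obstacle is the claim that the powers of a fixed atom form a chain, i.e. that $x_i^a\preceq x_i^b$ whenever $a\le b$ are both $v_{x_i}$-valuations of elements in $\power{x_i}$ — this is what lets me conclude $x_i^{v_i}\preceq z$ from merely knowing both are powers of $x_i$ with comparable ranks. I would handle this as a short sub-argument: given $z\in\power{x_i}$ with $\rho(z)=b\ge a=v_i$, consider $z\wedge x_i^{v_i}$; by Lemma \ref{lem:valuation}(1) its $x_i$-valuation is $\min\{a,b\}=a$, and it lies below $x_i^{v_i}$ hence has no other atoms below it, so $A(z\wedge x_i^{v_i})=\{x_i\}$, meaning it is a power of $x_i$ of total valuation $a$, hence rank $a$ by property \eqref{four}; by property (i) it equals the unique power of $x_i$ of rank $a$, namely $x_i^{v_i}$ itself, so $x_i^{v_i}=z\wedge x_i^{v_i}\preceq z$. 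Once this is in place the corollary follows cleanly, and the equivalence with $F(x)\subseteq F(y)$ is just unwinding notation.
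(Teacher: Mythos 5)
Your argument takes essentially the same route as the paper's: both write $x = \bigvee_{w} w^{v_w(x)}$ via Lemma~\ref{lemma1} and reduce to showing that each atom power $w^{v_w(x)}$ lies below $y$. The one place you add content is in justifying the step $w^{v_w(x)} \preceq w^{v_w(y)}$, i.e.\ that powers of a fixed atom are nested --- the paper simply asserts this in one line, and your sub-argument via $z \wedge x_i^{v_i}$, Lemma~\ref{lem:valuation}, and properties (i)--(ii) of a power lattice is a clean way to close that small gap.
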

\begin{proof}
The first part of the proof is just Lemma \ref{lem:iniqualityvaluation}.
Conversely, suppose that for all $w\in P(1)$, $v_w(x)\leq v_w(y)$. Therefore, $w^{v_w(x)} \preceq w^{v_w(y)} \preceq \bigvee\limits_{w\in P(1)} w^{v_w(y)} = y$. The last equality is a consequence of Lemma \ref{lemma1}. This implies $\bigvee\limits_w w^{v_w(x)} \preceq y$ and by Lemma \ref{lemma1}, $x\preceq y$.
\end{proof}


\begin{definition}[Total order on the elements of same rank]\label{Def:totalorder}
Let $(P, \preceq, \rho )$ be a power lattice. For a positive integer $l$ with $P(l)\neq \emptyset$, we define a total order on $P(l)$ as follows. Let $x,y$ be two distinct elements of $P(l)$ with the corresponding factorizations $F(x)=(x_1,\dots,x_t)$ and $F(y)=(y_1,\dots,y_t)$. 

We say that $x\preceq_l y$ if
\[
\begin{cases}
x=y  \quad \text{or} \\
x \neq y \text{ and if } i \text{ is the smallest integer such that } x_i\neq y_i, \text{ then } x_i\prec_1 y_i.
\end{cases}
\]
\end{definition}
From Remark \ref{rem:1}, $F(x)$ and $F(y)$ in the previous definition have the same length. Moreover, Corollary \ref{lemma1} says that two distinct elements of $P(l)$ must have distinct factorization. The relation $\preceq_l$ on the factorizations is just the lexicographic ordering induced by $\preceq_1$, so it is clear that we have a total order.

We give an equivalent definition of the total order of Definition \ref{Def:totalorder}, that will be useful in the following sections.
\begin{lemma}\label{lem:0}
Let $P$ be a power lattice and let $x\neq y$ be two elements of $P(l)$. Then $x\prec_l y$ if and only if 
\[
\min_{\preceq_1} F(x)\backslash F(y) \prec_1 \min_{\preceq_1} F(y)\backslash F(x),
\]
where the multiset difference is done by also considering the multiplicity. 
\end{lemma}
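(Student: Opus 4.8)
The plan is to reduce the biconditional to a single implication by exploiting the evident symmetry of the statement in $x$ and $y$, and then to prove that implication by locating, inside the two sorted factorization tuples, the first coordinate at which they disagree.

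First I would record the preliminaries. Write $D_x:=F(x)\setminus F(y)$ and $D_y:=F(y)\setminus F(x)$ for the multiset differences, so that the multiplicity of an atom $w$ in $D_x$ is $\max\{0,\,v_w(x)-v_w(y)\}$. By Remark \ref{rem:1} the tuples $F(x)$ and $F(y)$ have the same length, and by Lemma \ref{lemma1} they are distinct since $x\neq y$; hence neither $D_x$ nor $D_y$ is empty, because an empty $D_x$ would make $F(x)$ a submultiset of $F(y)$ and then the equal-length property would force $F(x)=F(y)$. Thus $a:=\min_{\preceq_1}D_x$ and $b:=\min_{\preceq_1}D_y$ are well defined, and $a\neq b$ because no single atom can have strictly larger multiplicity in $x$ than in $y$ and simultaneously the reverse. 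Since $\preceq_1$ totally orders $P(1)$, exactly one of $a\prec_1 b$, $b\prec_1 a$ holds; since $\preceq_l$ totally orders $P(l)$ and $x\neq y$, exactly one of $x\prec_l y$, $y\prec_l x$ holds. Consequently it is enough to prove the implication $a\prec_1 b\Longrightarrow x\prec_l y$: interchanging the roles of $x$ and $y$ then yields $b\prec_1 a\Longrightarrow y\prec_l x$, and the two implications together with the two trichotomies give the equivalence.

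For the implication, assume $a\prec_1 b$. The first claim is that $v_w(x)=v_w(y)$ for every atom $w\prec_1 a$: we have $v_w(x)\le v_w(y)$, since otherwise $w\in D_x$ would contradict the minimality of $a$, and $v_w(y)\le v_w(x)$, since otherwise $w\in D_y$ would give $b\preceq_1 w\prec_1 a$, contradicting $a\prec_1 b$. Now write $F(x)=(x_1,\dots,x_t)$ and $F(y)=(y_1,\dots,y_t)$ in nondecreasing $\preceq_1$-order and put $N:=\sum_{w\prec_1 a}v_w(x)=\sum_{w\prec_1 a}v_w(y)$ and $m:=v_a(y)$, where $m<v_a(x)$ because $a\in D_x$. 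The first $N$ coordinates of each tuple are precisely the atoms $\prec_1 a$ listed in $\preceq_1$-order, and they coincide by the claim; the next $m$ coordinates of each tuple equal $a$. Since $F(x)$ already contains $N+v_a(x)\ge N+m+1$ coordinates taken from atoms $\preceq_1 a$, and $|F(y)|=|F(x)|=t$, we get $t\ge N+m+1$, so coordinate $N+m+1$ exists in both tuples; there $x_{N+m+1}=a$ while $y_{N+m+1}\succ_1 a$ (the $m$ copies of $a$ in $F(y)$ are exhausted and the tuple is sorted). Hence $N+m+1$ is the smallest index where the two tuples differ, and there $x_{N+m+1}\prec_1 y_{N+m+1}$, so $x\prec_l y$ by Definition \ref{Def:totalorder}.

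I do not expect a real obstacle: the statement is the standard fact that lexicographic comparison of two equal-size sorted multisets is decided by the $\preceq_1$-least element that lies in exactly one of them, and the only power-lattice input is Remark \ref{rem:1}, which is precisely what guarantees that both multiset differences are nonempty and that the discriminating coordinate $N+m+1$ genuinely occurs in $F(y)$. The one point needing care is the bookkeeping that the first $N+m$ coordinates of the two sorted tuples agree and that coordinate $N+m+1$ exists; I would handle this by splitting each factorization into the block of atoms $\prec_1 a$, the block of copies of $a$, and the remainder, rather than juggling indices directly.
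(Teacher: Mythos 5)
Your proof is correct and takes a genuinely different route from the paper's. The paper establishes both directions of the biconditional separately: the forward direction by locating the first index $j$ where the sorted factorizations disagree and sandwiching $\min_{\preceq_1} F(x)\backslash F(y) \preceq_1 x_j \prec_1 y_j \preceq_1 \min_{\preceq_1} F(y)\backslash F(x)$, and the converse by contradiction using a similar positional argument. You instead exploit the manifest $x\leftrightarrow y$ symmetry of the statement together with the trichotomy of the two total orders $\preceq_1$ and $\preceq_l$ to reduce the biconditional to the single implication $\min_{\preceq_1} F(x)\backslash F(y) \prec_1 \min_{\preceq_1} F(y)\backslash F(x) \Rightarrow x\prec_l y$, and you prove that implication by explicitly decomposing the sorted tuples into three blocks (atoms $\prec_1 a$, the copies of $a$, the remainder) and reading off the first discrepant coordinate $N+m+1$. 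This buys a shorter and more symmetric argument that avoids proof by contradiction, and it isolates the one genuine power-lattice input — Remark \ref{rem:1}, which makes both multiset differences nonempty and guarantees the discriminating coordinate exists in $F(y)$ — very cleanly; the underlying combinatorial fact (lexicographic comparison of equal-size sorted multisets is decided by the $\preceq_1$-least element lying in exactly one of them) is of course the same in both.
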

\begin{proof}
Since $F(x)$ and $F(y)$ have the same length and they are distinct, then $F(x)\backslash F(y)$ and $F(y)\backslash F(x)$ are non-empty. Let $F(x)=(x_1,x_2,\dots,x_t)$ and $F(y)=(y_1,y_2\dots,y_t)$ be the factorizations of $x$ and $y$ respectively. Suppose that $j$ is the smallest integer such that $x_j\neq y_j$ and $x_j\prec_1 y_j$. Then $x_j\in F(x)\backslash F(y)$ and therefore $\min_{\preceq_1} F(x)\backslash F(y)\preceq_1 x_j$. Now $F(y)\backslash F(x)\subset \{y_j,\dots, y_t\}$. Therefore $y_j\preceq_1 \min_{\preceq_1} F(y)\backslash F(x)$. Since $x_j\prec_1 y_j$, we get the desired result.

Conversely, let $\min_{\preceq_1} F(x)\backslash F(y) \prec_1 \min_{\preceq_1} F(y)\backslash F(x)$. Suppose, $x \succ_l y $ and then it follows that $x_j \succ_1 y_j$. This implies $y_j \in F(y)\backslash F(x)$ and in fact, $y_j = \min_{\preceq_1} F(y)\backslash F(x)$. Combining all these we get,
\begin{equation}
\min_{\preceq_1} F(x)\backslash F(y) \prec_1 \min_{\preceq_1} F(y)\backslash F(x) = y_j \prec_1 x_j.
\end{equation}
Since all $x_i$ for $i < j$, precedes $y$, $\min_{\preceq_1} F(x)\backslash F(y) \prec_1 x_j$ gives a contradiction. This proves $x \prec_l y$.
\end{proof}

\begin{lemma}
Let $P$ be a power lattice. Let $x, y \in P$ be such that $x\in P(1)$ and $A(y)= \{x\}$. Then there is a unique chain, i.e., totally ordered set $x\covered x^2 \covered \cdots \covered y=x^{\rho(y)}$. The elements in this chain are all powers of $x$.
\end{lemma}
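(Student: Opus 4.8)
The plan is to analyze the interval $[\hat{0}, y]$ in the power lattice $P$ and show that it is a chain whose elements are exactly $x, x^2, \dots, x^{\rho(y)} = y$. First I would observe that, by hypothesis, $A(y) = \{x\}$, so $y$ is a power of the atom $x$ by definition, and hence $\rho(y) = v_x(y)$. By the Corollary following Lemma \ref{lemma1}, for any $z \in P$ we have $z \preceq y$ if and only if $v_w(z) \le v_w(y)$ for all $w \in P(1)$; since $v_w(y) = 0$ for every atom $w \ne x$, this forces $v_w(z) = 0$ for all $w \ne x$ as well, so $z$ is itself a power of $x$ (or $z = \hat 0$), with $0 \le v_x(z) \le v_x(y) = \rho(y)$. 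Thus every element of $[\hat 0, y]$ lies in $\{\hat 0\} \cup \power{x}$.

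Next I would use property (i) of Definition \ref{def:powerlattice}: for the atom $x$ and each integer $r$ with $0 \le r \le \rho(y)$, there is \emph{at most} one element of $\power{x}$ of rank $r$. To get existence, I would build the elements explicitly: set $x^{r} := x^{v_x(x)} \vee \cdots$; more directly, for $1 \le r \le \rho(y)$ define $x^{r}$ via its factorization $F = (x, x, \dots, x)$ ($r$ times), which is a legitimate factorization of length $r$, and by Lemma \ref{lemma1} there is a (unique) element of $P$ realizing it — call it $x^r$ — which is a power of $x$ with $v_x(x^r) = r$, hence $\rho(x^r) = r$ by property (ii) (or directly since $x^r$ is a power of $x$). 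Since $F(x^r) \subseteq F(x^{r+1}) \subseteq F(y)$, the Corollary gives $x^r \preceq x^{r+1} \preceq y$, and the ranks differ by exactly one, so in fact $x^r \covered x^{r+1}$. Combined with the first paragraph, every element of $[\hat 0, y]$ equals $x^r$ for a unique $r \in \{0, 1, \dots, \rho(y)\}$ (with $x^0 = \hat 0$, $x^1 = x$), and distinct ranks give distinct elements, so $[\hat 0, y]$ is totally ordered: $\hat 0 \covered x \covered x^2 \covered \cdots \covered x^{\rho(y)} = y$. Uniqueness of the chain follows because any maximal chain in $[\hat 0,y]$ must pass through one element of each rank $0, 1, \dots, \rho(y)$ (ranks jump by one along covers), and we have just shown there is exactly one such element at each rank.

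The main obstacle I anticipate is the \emph{existence} half — knowing that an element of $\power{x}$ of each intermediate rank $r$ actually exists, not merely that at most one does. I would resolve this either by appealing to Lemma \ref{lemma1} (every formal factorization is realized by an element of $P$, since the length-$r$ string $(x,\dots,x)$ is a valid factorization whose associated join $\bigvee_{i=1}^{r} x$ lies in $P$ and has the right valuation profile), or, if one prefers to stay internal to the interval, by taking a maximal chain from $\hat 0$ to $y$ in the finite poset $[\hat 0, y]$ and noting that such a chain has length $\rho(y)$ because $\rho$ increases by exactly $1$ along each cover; its $r$-th element then has rank $r$, is a power of $x$ by the first paragraph, and is the unique such by property (i). Everything else is bookkeeping with the valuation functions and the already-established Corollary.
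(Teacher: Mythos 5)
Your overall structure matches the paper's: (1) any $z$ with $\hat{0} \prec z \preceq y$ must be a power of $x$, because $A(x) \subseteq A(z) \subseteq A(y) = \{x\}$ (Lemma \ref{lem:iniqualityvaluation}, or equivalently the valuation corollary you invoke); (2) at most one power of $x$ per rank, by Definition \ref{def:powerlattice}(i); (3) at least one power of $x$ at each rank $1, \ldots, \rho(y)$, from the ranked-lattice structure. The second of your two proposed routes to (3) — take a maximal chain from $\hat{0}$ to $y$ in the finite interval and use that covers raise rank by exactly one — is precisely what the paper does, citing Definition \ref{def:rankedlattice}.

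Your first route to existence, however, does not work, and it is worth understanding why. Lemma \ref{lemma1} is a uniqueness/recovery statement about elements that are already in $P$: if $z \in P$ has factorization $F(z) = \prod_i x_i^{v_i}$, then $z = \bigvee_i x_i^{v_i}$, and distinct elements have distinct factorizations. It does \emph{not} assert that every formal string of atoms is realized by some element of $P$. Writing the string $(x, \dots, x)$ ($r$ times) and invoking Lemma \ref{lemma1} is circular: the join ``$\bigvee_{i=1}^r x$'' you write collapses to $x$ by idempotency, while the join that Lemma \ref{lemma1} would actually form, namely $x^r$, is exactly the element whose existence is at issue. Since you flag existence as the main obstacle and supply the maximal-chain alternative, your proof does go through on that second route — just drop the Lemma \ref{lemma1}-based argument.
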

\begin{proof}

For the existence of such a chain, we first note that any element $z \in P$ such that $x \prec z \prec y = x^{\rho(y)}$ has to be in $\power{x}$. Indeed, since Lemma \ref{lem:iniqualityvaluation} implies $\{x\} = A(x) \subseteq A(z) \subseteq A(y) =\{x\}$ and thus $A(z) =\{x\}$. The existence of power of $x$ for each $r$ with $1 \le r \le \rho(y)$ follows from Definition \ref{def:rankedlattice} of ranked lattice.
The uniqueness follows from $(i)$ of Definition \ref{def:powerlattice} of a power lattice.
\end{proof}


\section{Complexes in power lattices}\label{sec:complexesinpowerlattices}
In this section, we introduce the notion of complexes in power lattices, that we call $\PP$-complexes, to generalize and extend the notion of abstract simplicial complexes, $q$-complexes, and multicomplexes. Then we define the notion of $\PP$-shellability of $\PP$-complexes extending the existing notions of shellability of the complexes mentioned above. The main result of this section is the shellability of the order complex of a $\PP$-shellable $\PP$-complex.


\begin{definition}[$\PP$-complexes]
Let $(P,\preceq)$ be a power lattice. A $\PP$-complex $S$ in $P$ is a non-empty subposet of $P$ such that $\forall x\in S$ and $y\in P$, $y\preceq x$ implies $y\in S$.
\end{definition}

In other words, a $\PP$-complex is a subset of $P$ which is closed under $\preceq$. 
For $Q\subseteq P$, the $\PP$-complex generated by $Q$ is the $\PP$-complex defined by
\[
\langle Q\rangle := \{x\in P\colon x\preceq y \text{ for some } y\in Q\}.
\]

\begin{definition}
Let $(P,\rho) $ be a power lattice and let $S$ be a $\PP$-complex in $P$. The elements of $S$ are called the faces of $S$ and the maximum elements of $S$ are called the facets of $S$. If all the facets have same rank, then $S$ is called a \emph{pure} $\PP$-complex. The \emph{rank} of a $\PP$-complex is defined to be the maximum of ranks of its faces.
\end{definition}

\begin{definition}[$\PP$-shellable $\PP$-complexes]\label{def:shellable}
Let $(P,\preceq, \rho)$ be a power lattice. A $\PP$-complex $S\subseteq P$ of rank $r$ is called $\PP$-shellable if $S$ is pure and there exists an ordering $f_1,f_2,\ldots,f_t$ on the facets of $S$ such that for all $1\leq i<j\leq t$, there exists $k<j$ for which $f_i \wedge f_j\preceq f_k\wedge f_j$ and $\rho(f_k\wedge f_j)=r-1$.
\end{definition}

If we take a finite set $A$ as in Example \ref{exa:1}, then we recover the notion of shellable simplicial complex.

\begin{definition}
    A simplicial complex $S$ is a non-empty set of subsets of $2^A$ such that  $S$ is closed under inclusion. A shellable simplicial complex is a simplicial complex $S$ such that its facets have the same cardinality $r$ and there exists an ordering $F_1,F_2,\cdots,F_t$ of all the facets of $S$ such that for all $i<j\leq t$, there exists $k<j$ such that $F_i \cap F_j\subseteq F_k\cap F_j$ and $|F_k\cap F_j|=r-1$.
\end{definition}

If we consider the power lattice $\Sigma(\Fq^n)$, then we have the $q$-shellable $q$-complexes

\begin{definition}[\cite{Alder10, GPR22}]
A $q$-complex $S$ is a non-empty set of subsets of $\Sigma(\Fq^n)$ such that  $S$ is closed under inclusion. A $q$-shellable $q$-complex is a $q$-complex $S$ such that its facets have the same dimension $r$ and there exists an ordering $F_1,F_2,\cdots,F_t$ of all the facets of $S$ such that for all $i<j\leq t$, there exists $k<j$ such that $F_i \cap F_j\subseteq F_k\cap F_j$ and $\dim_{\Fq} F_k\cap F_j=r-1$.
\end{definition}

\begin{definition}
Let $P$ be a power lattice of rank $n$. For $l\leq n-1$, an $l$-sphere $S_x$ of $P$ is a subposet such that there  exists $x\in P(l+1)$ such that $S_x=\{ y\in P\colon y\prec x \}$.
\end{definition}

\begin{remark}
When $P=2^{A}$ or $P=\Sigma(\Fq^n)$,  the $l$-spheres in $P$ are $\PP$-shellable.
\end{remark}

For the lattice $2^A$ of subsets, the shellability helps to describe the homology degrees of some topological space associated to the simplicial complex \cite[Appendix]{Bjo80}. For the lattice of subspaces $\Sigma(\Fq^n)$, the shellability helped partially to describe the homology degrees \cite{GPR22}. However, by considering the associated order complex, the homology can be fully obtained \cite{GPTVW24}. We generalize this results in the setting of $\PP$-complexes in a power lattice.

\begin{definition}[Order complex]
Let $P$ be a poset. A chain in $P$ is a totally ordered subset of $P$ and we write a chain as a sequence $(x_1\prec x_2\prec \dots \prec x_l)$. The order complex of $\mathcal{K}(P)$ is the poset of all chains in $P$.
\end{definition}


The order complex of a poset is known to be a simplicial complex. The next theorem says that the order complex of an $l$-sphere is shellable as a simplicial complex. For that we first need an ordering on the maximal chains.

\begin{definition}[Reverse lexicographic ordering]
Let $(P,\preceq)$ be a power lattice and let $S$ be a pure $\PP$-complex of rank $r$. Suppose that $\preceq_i$ is the total ordering on $P(i)$. The reverse ordering $\trianglelefteq$ on the facets of $\mathcal{K}(S)$ is defined as follows: Let $X=(x_0\preceq \dots \preceq x_r)$ and $Y=(y_0\preceq \dots \preceq y_r)$ be two maximal chains of $S$. $X \trianglelefteq Y$ if $X=Y$ or if $X\neq Y$ and $x_i\prec_i y_i$ where $i$ is the largest integer such that $x_i\neq y_i$.
\end{definition}
 It is clear that $\trianglelefteq$ is a total order, since this is just the reverse lexicographic order.

\begin{lemma}\label{lem:1}
Let $(P, \preceq, \rho)$ be a power lattice and let $(x_{i_1}\covered \dots \covered x_{i_s})$ be a chain in $P$ with $\rho(x_{i_j})=i_j$. If for all $i_1<i<i_s$, $x_i = \min_{\preceq_i}\{ a\in P(i)\colon x_{i-1} \prec a\prec x_{i+1} \}$, then for all $i_1<i\leq i_s$,
\[
\min_{\preceq_1} F(x_i)\backslash F(x_{i-1}) = \min_{\preceq_1} F(x_{i_s})\backslash F(x_{i-1}).
\]
\end{lemma}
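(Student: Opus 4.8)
The plan is to prove the statement by downward induction on $i$, starting from $i = i_s$ and decreasing to $i = i_1 + 1$. For the base case $i = i_s$ the claim is trivial: both sides are literally $\min_{\preceq_1} F(x_{i_s})\backslash F(x_{i_s - 1})$. For the inductive step, I would assume the equality holds for $i+1$, that is, $\min_{\preceq_1} F(x_{i+1})\backslash F(x_i) = \min_{\preceq_1} F(x_{i_s})\backslash F(x_i)$, and aim to deduce it for $i$. The key structural fact I would exploit is that since $x_{i-1} \covered x_i \covered x_{i+1}$, the factorizations satisfy $F(x_{i-1}) \subset F(x_i) \subset F(x_{i+1})$ (by the Corollary after Lemma~\ref{lemma1}), and each step adds exactly one atom (counted with multiplicity), by Remark~\ref{rem:1} together with the rank jumping by one. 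So there is a single atom $a$ with $F(x_i) = F(x_{i-1}) \sqcup \{a\}$, hence $\min_{\preceq_1} F(x_i)\backslash F(x_{i-1}) = a$, and a single atom $b$ with $F(x_{i+1}) = F(x_i) \sqcup \{b\}$.

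The heart of the argument is to show that the minimality hypothesis on $x_i$ forces $a \preceq_1 b$, in fact $a \preceq_1 (\text{every atom in } F(x_{i_s})\backslash F(x_i))$. Suppose for contradiction that there is an atom $c \in F(x_{i_s})\backslash F(x_i)$ with $c \prec_1 a$. Consider the element $x_i'$ whose factorization is $F(x_{i-1}) \sqcup \{c\}$; this is well-defined by Lemma~\ref{lemma1}, it has rank $i$, and it satisfies $x_{i-1} \prec x_i'$. Moreover $F(x_i') = F(x_{i-1}) \sqcup \{c\} \subseteq F(x_{i_s})$, and I must check $x_i' \prec x_{i+1}$: for this I would use that $F(x_{i+1}) = F(x_i) \sqcup \{b\}$ and note that $c$ already appears in $F(x_{i+1})$ (since $c \in F(x_{i_s})$ and we need $c \in F(x_{i+1})$ — this requires a small additional observation, likely that $F(x_{i+1}) \backslash F(x_i) = \{b\}$ and $c \in F(x_{i_s})$, so $c$ must already be in $F(x_{i+1})$ unless $c = b$, but $c \prec_1 a \preceq_1 b$ would need to be ruled out too). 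Granting $x_i' \prec x_{i+1}$, the element $x_i'$ is a competitor in the set $\{a \in P(i) : x_{i-1} \prec a \prec x_{i+1}\}$, and by Lemma~\ref{lem:0} the comparison $x_i'$ versus $x_i$ reduces to comparing $\min_{\preceq_1} F(x_i')\backslash F(x_i) = c$ against $\min_{\preceq_1} F(x_i)\backslash F(x_i')$; since $c \prec_1 a$ and $a$ is essentially the witness on the other side, we get $x_i' \prec_i x_i$, contradicting minimality of $x_i$.

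Once $a = \min_{\preceq_1} F(x_i)\backslash F(x_{i-1})$ is shown to be $\preceq_1$-smaller than every atom of $F(x_{i_s})\backslash F(x_i)$, I conclude as follows. We have $F(x_{i_s})\backslash F(x_{i-1}) = \{a\} \sqcup \big(F(x_{i_s})\backslash F(x_i)\big)$ as multisets, because $F(x_{i-1}) \subset F(x_i) \subset F(x_{i_s})$ and $F(x_i)\backslash F(x_{i-1}) = \{a\}$. Taking $\preceq_1$-minima, $\min_{\preceq_1} F(x_{i_s})\backslash F(x_{i-1}) = \min_{\preceq_1}\big(\{a\} \cup (F(x_{i_s})\backslash F(x_i))\big) = a$, using precisely that $a$ is $\preceq_1$-below everything in $F(x_{i_s})\backslash F(x_i)$. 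On the other hand $\min_{\preceq_1} F(x_i)\backslash F(x_{i-1}) = a$ directly. Hence the two sides agree, completing the induction.

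The main obstacle I anticipate is the verification, inside the contradiction argument, that the competitor element $x_i'$ (with factorization $F(x_{i-1}) \sqcup \{c\}$) indeed satisfies $x_i' \prec x_{i+1}$ — i.e.\ that adding the ``small'' atom $c$ still stays below $x_{i+1}$. This should follow from a careful bookkeeping of which atoms lie in $F(x_{i+1})$ versus $F(x_{i_s})$, using the inductive hypothesis at level $i+1$ to control $\min_{\preceq_1} F(x_{i_s})\backslash F(x_i)$ and hence to locate $c$; but it is the step where the chain condition and the minimality interact most delicately, and one has to be attentive to the edge case where the offending atom $c$ happens to equal $b$, the atom added in passing from $x_i$ to $x_{i+1}$. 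Handling that case — showing it cannot violate minimality either, or is excluded outright — is where I would spend the most care.
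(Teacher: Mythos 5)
There is a genuine gap at the very foundation of your argument. You assert that, because $x_{i-1}\covered x_i$ and by Remark~\ref{rem:1}, there is a single atom $a$ with $F(x_i)=F(x_{i-1})\sqcup\{a\}$, i.e., that each covering step enlarges the factorization multiset by exactly one atom. This is false in a general power lattice. Remark~\ref{rem:1} only says that two elements \emph{of the same rank} have factorizations of the same length; it does not say that the length equals the rank, nor that consecutive ranks differ in factorization length by one. In the Boolean and multiset lattices your premise happens to hold, but in the subspace lattice $\Sigma(\Fq^n)$, passing from a subspace of dimension $i-1$ to one of dimension $i$ adds $q^{i-1}$ new one-dimensional subspaces (atoms) to $A(\cdot)$, so $F(x_i)\backslash F(x_{i-1})$ has $q^{i-1}$ elements, not one. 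Your entire induction step, which hinges on identifying the unique added atom $a$ (and similarly $b$ for the next step), therefore collapses outside the multiset case, which is precisely the generality the lemma is claiming.

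A second, related problem is the ``competitor'' element: you define $x_i'$ as ``the element whose factorization is $F(x_{i-1})\sqcup\{c\}$'' and cite Lemma~\ref{lemma1} for well-definedness. But Lemma~\ref{lemma1} only gives uniqueness of an element with a given factorization, not existence; there is no reason an element with factorization $F(x_{i-1})\sqcup\{c\}$ should exist in $P$, and in the subspace lattice it generally does not (the factorization of $x_{i-1}\vee c$ contains many atoms beyond those of $x_{i-1}$ and $c$). The paper's proof sidesteps both issues by never reasoning about single atoms added across a cover. Instead, for $z=\min_{\preceq_1}F(x_{i+1})\backslash F(x_{i-1})$ it picks the least $t$ with $z^t\npreceq x_{i-1}$ and compares $x_i$ with $x_{i-1}\vee z^t$, using \emph{semimodularity} of $\rho$ to bound $\rho(x_{i-1}\vee z^t)\le i$; this produces a concrete competitor element of rank $i$ between $x_{i-1}$ and $x_{i+1}$, after which Lemma~\ref{lem:0} and the $\preceq_i$-minimality of $x_i$ yield the contradiction. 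If you want to salvage your outline, you would need to replace your ``add one atom $c$'' move by this join-with-a-power-of-an-atom move and carry the argument in terms of valuations $v_z(\cdot)$ rather than in terms of single elements of the factorization multiset.
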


\begin{proof}
Let $i_1<i< i_s$. First we show that \[
\min_{\preceq_1} F(x_{i+1})\backslash F(x_{i-1}) = \min_{\preceq_1} F(x_{i})\backslash F(x_{i-1}).
\]
Let $z = \min_{\preceq_1}  F(x_{i+1})\backslash F(x_{i-1}) $. Let $t$ be the smallest positive integer such that $z^t \npreceq x_{i-1}$ and therefore surely  $z^t \preceq x_{i+1}$. So $x_{i-1} \prec x_{i-1}\vee z^t \preceq x_{i+1}$ and by semimodularity of the lattice, we have $\rho(x_{i-1}\vee z^t) \leq \rho(x_{i-1}) + \rho(z^t) - \rho(x_{i-1}\wedge z^t) = (i-1) +t -(t -1)= i$. Since $\rho(x_{i+1}) = i+1$, we have $x_{i-1} \covered x_{i-1} \vee z^t \covered x_{i+1}$ If $x_{i-1} \vee z^t \neq x_i $, then $x_i \prec_i x_{i-1} \vee z^t$. Thus by Lemma \ref{lem:0},
\begin{equation}\label{min}
\min_{\preceq_1} F(x_i) \backslash F(x_{i-1}\vee z^t)  \prec_1 \min_{\preceq_1} F(x_{i-1}\vee z^t) \backslash F(x_i) \preceq_1 z. 
\end{equation}
We get the second inequality because $z^t \npreceq x_i$ as otherwise $x_{i-1} \vee z^t \preceq x_i$. And since they have the same rank, they would be equal which contradicts our assumption. Since $F(x_i) \backslash F(x_{i-1} \vee z^t) \subseteq F(x_i) \backslash F(x_{i-1})  \subseteq F(x_{i+1})\backslash F(x_{i-1})$, $$z = \min_{\preceq_1}F(x_{i+1})\backslash F(x_{i-1}) \preceq_1 \min_{\preceq_1} F(x_i) \backslash F(x_{i-1} \vee z^t).$$

But this together with Equation \eqref{min} implies $z \prec_1 z$, which is a contradiction. 

Therefore, $x_{i-1} \vee z^t = x_i$ and this implies $z^t \preceq x_i$. Thus $z\in F(x_i)\backslash F(x_{i-1})$. This implies that $\min_{\preceq_1}F(x_i)\backslash F(x_{i-1})\preceq_1 z$. On the other side, we know that $z=\min_{\preceq_1}  F(x_{i+1})\backslash F(x_{i-1}) \preceq_1 \min_{\preceq_1}  F(x_i)\backslash F(x_{i-1})$.
Hence $z = \min_{\preceq_1}F(x_i)\backslash F(x_{i-1})$. Thus it proves that
\[
\min_{\preceq_1} F(x_{i+1})\backslash F(x_{i-1}) = \min_{\preceq_1} F(x_{i})\backslash F(x_{i-1}),\quad \forall i, i_1<i< i_s.
\]

Let $i_1<i\leq i_s$. Now we show inductively that, for all $i<j\leq i_s$
\[
\min_{\preceq_1} F(x_{i+1})\backslash F(x_{i-1}) = \min_{\preceq_1} F(x_{j})\backslash F(x_{i-1}).
\]
The case $j=i_s$ will give the result of the Lemma.
The statement is obviously true for $j=i+1$. Suppose that it is true for $j>i$ i.e.  
\begin{equation}\label{ind:1}
\min_{\preceq_1} F(x_{i+1})\backslash F(x_{i-1}) = \min_{\preceq_1} F(x_{j})\backslash F(x_{i-1}).
\end{equation}
We want to show that 
\begin{equation}\label{ind:2}
\min_{\preceq_1} F(x_{i+1})\backslash F(x_{i-1}) = \min_{\preceq_1} F(x_{j+1})\backslash F(x_{i-1}).
\end{equation}
Since $x_j\prec x_{j+1}$, then, using Equation \eqref{ind:1}
\begin{equation}\label{ind:3}
z:=\min_{\preceq_1} F(x_{j+1})\backslash F(x_{i-1}) \preceq_1 \min_{\preceq_1} F(x_j)\backslash F(x_{i-1})=\min_{\preceq_1} F(x_{i+1})\backslash F(x_{i-1}).
\end{equation}
We claim that $z\in F(x_{i+1})\backslash F(x_{i-1})$. This implies that $\min_{\preceq_1} F(x_{i+1})\backslash F(x_{i-1})\preceq_1 z$ and therefore $\min_{\preceq_1} F(x_{j+1})\backslash F(x_{i-1})= \min_{\preceq_1} F(x_{i+1})\backslash F(x_{i-1})$ which concludes the proof.

\textbf{Proof of claim}: Suppose that $z\notin F(x_{i+1})\backslash F(x_{i-1})$. Thus $v_z(x_{i-1})=v_z(x_i)=v_z(x_{i+1})=r$. Since $v_z(x_{j+1})>v_z(x_{i-1})$ and $i+1<j+1$, then we can find $j> k\geq i$ such that $v_z(x_{k-1})=v_z(x_{k})=v_z(x_{k+1})= r$ and $v_z(x_{k+2})>r$.

Now, take $x_k\preceq x_k\vee z^{r+1} \preceq x_{k+2}$. Since $z^{r+1}\npreceq x_k$, then $x_k\prec x_k\vee z^{r+1}$. Furthermore, by semimodularity, $\rho(x_k\vee z^{r+1})\leq \rho(x_k)+\rho(z^{r+1})-\rho(x_k\wedge z^{r+1})=k+1<\rho(x_{k+2})$. Hence $x_k\vee z^{r+1} \prec x_{k+2}$. Hence $x_k\prec x_k\vee z^{r+1} \prec x_{k+2}$. By the minimality of $x_{k+1}$ in this interval, $x_{k+1}\preceq_{k+1} x_k\vee z^{r+1}$. Since $z^{r+1}\npreceq x_{k+1}$, then $x_{k+1}\prec_{k+1} x_k\vee z^{r+1}$. By Lemma \ref{lem:0}, 
\begin{equation}\label{ind:4}
\min_{\preceq_1} F(x_{k+1})\backslash F(x_k\vee z^{r+1}) \prec_1 \min_{\preceq_1} F(x_k\vee z^{r+1})\backslash F(x_{k+1}).
\end{equation}
Since $z^{r+1}\npreceq x_{k+1}$, then $z\in F(x_k\vee z^{r+1})\backslash F(x_{k+1})$. Thus $\min_{\preceq_1} F(x_k\vee z^{r+1})\backslash F(x_{k+1})\preceq_1 z$. Combining this with Equation \eqref{ind:4}, we have
\begin{equation}\label{ind:5}
\min_{\preceq_1} F(x_{k+1})\backslash F(x_k\vee z^{r+1}) \prec_1 z.
\end{equation}

Since $k<j$, $x_{k+1}\preceq x_j$. Therefore $F(x_{k+1})\subseteq F(x_j)$. This implies that $F(x_{k+1})\backslash F(x_k\vee z^{r+1})\subseteq F(x_j)\backslash F(x_k\vee z^{r+1})$. Thus
\[
\min_{\preceq_1} F(x_j)\backslash F(x_k\vee z^{r+1}) \preceq_1 \min_{\preceq_1} F(x_{k+1})\backslash F(x_k\vee z^{r+1}).
\]
Together with Equation \eqref{ind:5}, this gives us
\begin{equation}\label{ind:6}
\min_{\preceq_1} F(x_j)\backslash F(x_k\vee z^{r+1})  \prec_1 z.
\end{equation}
Now, since $i\leq k$, then  $x_{i-1}\preceq x_{i}\preceq x_k\preceq x_k\vee z^{r+1}$. Thus $F(x_j)\backslash F(x_k\vee z^{r+1})\subseteq F(x_j)\backslash F(x_{i-1})$. Thus $\min_{\preceq_1} F(x_j)\backslash F(x_{i-1}) \preceq_1 \min_{\preceq_1} F(x_j)\backslash F(x_k\vee z^{r+1})$. Together with Equation \eqref{ind:3}, this gives us  $z \preceq_1 \min_{\preceq_1} F(x_j)\backslash F(x_k\vee z^{r+1})$. Combining this with Equation \eqref{ind:6}, we have $z\prec_1 z$ which is impossible. Hence the claim.
\end{proof}

If $P$ is the lattice of subsets or lattice of subspaces, the rank function $\rho$ is modular. In this case, every sphere is shellable as $P$ is a modular lattice. For general power lattices, we only have semimodularity of the rank function $\rho$. So we cannot directly conclude whether the spheres are shellable $\PP$-complexes or not.  
{ However, we can directly show the shellability of the associated order complexes.}

\begin{theorem}[Shellability of the order complex of a sphere]\label{shellabilitysphere}
Let $P$ be a power lattice of rank $n$ and let $l$ be an integer such that $l\leq n-1$. For an $l$-sphere $S_x$ in $P$, the order complex $\mathcal{K}(S_x)$ is a shellable simplicial complex with respect to the ordering $\trianglelefteq$ on the facets of $\mathcal{K}(S_x)$.
\end{theorem}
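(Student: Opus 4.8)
The plan is to verify that the reverse lexicographic order $\trianglelefteq$ is a shelling order of $\mathcal{K}(S_x)$. First I would pin down the facets. Since $\rho(x)=l+1$ we have $\0\prec x$, and any $y\prec x$ with $\rho(y)<l$ lies below some $z$ with $y\covered z\prec x$ (take $z=y\vee w^{v_w(y)+1}$ for an atom $w$ with $v_w(x)>v_w(y)$; semimodularity gives $\rho(z)=\rho(y)+1$ because $\rho(y\wedge w^{v_w(y)+1})=v_w(y)$). So $S_x$ is pure of rank $l$, and the facets of $\mathcal{K}(S_x)$ are exactly the maximal chains $\0=x_0\covered x_1\covered\cdots\covered x_l$ with $\rho(x_i)=i$ and $x_l\covered x$. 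For a facet $Y=(y_0\covered\cdots\covered y_l)$ and $1\le i\le l$ set $W_i(Y)=\{b\in P(i)\colon y_{i-1}\prec b\prec y_{i+1}\}$, with the convention $y_{l+1}=x$. Any facet meeting $Y$ in exactly $l$ of its $l+1$ elements is obtained from $Y$ by replacing one entry $y_i$ by some element of $W_i(Y)$, and it precedes $Y$ in $\trianglelefteq$ precisely when that element is $\prec_i y_i$. Hence $Y\setminus\{y_i\}$ lies in a facet that is $\trianglelefteq$-before $Y$ if and only if $y_i\neq\min_{\preceq_i}W_i(Y)$, and the shelling condition reduces to the following: for any two distinct facets $X\triangleleft Y$ there is an index $i$ with $x_i\neq y_i$ and $y_i\neq\min_{\preceq_i}W_i(Y)$. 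Given such $i$, the facet $Z$ obtained from $Y$ by replacing $y_i$ with $\min_{\preceq_i}W_i(Y)$ satisfies $Z\triangleleft Y$, $|Z\cap Y|=l$, and $X\cap Y\subseteq Z\cap Y$ since $y_i\notin X$.

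To establish this I would argue by contradiction. Let $j$ be the largest index with $x_j\neq y_j$; the definition of $\trianglelefteq$ gives $x_j\prec_j y_j$ and $x_i=y_i$ for $i>j$, so in particular $x_{j+1}=y_{j+1}$ (reading $x_{l+1}=y_{l+1}=x$). Let $a<j$ be the largest index with $x_a=y_a$, which exists as $x_0=\0=y_0$; then $x_i\neq y_i$ for all $a<i\le j$. Suppose that $y_i=\min_{\preceq_i}W_i(Y)$ for every $a<i\le j$. Then the chain $y_a\covered y_{a+1}\covered\cdots\covered y_j\covered y_{j+1}$ satisfies exactly the hypothesis of Lemma \ref{lem:1}, whence $\min_{\preceq_1}\bigl(F(y_i)\setminus F(y_{i-1})\bigr)=\min_{\preceq_1}\bigl(F(y_{j+1})\setminus F(y_{i-1})\bigr)$ for all $a<i\le j+1$.

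I would then unwind this into a description of $F(y_j)$. Since $y_{i-1}\covered y_i$ the multiset difference $F(y_i)\setminus F(y_{i-1})$ is a single atom $w_i$, and the identity above says $w_i$ is the $\preceq_1$-least atom of $M:=F(y_{j+1})\setminus F(y_a)$ not already consumed as $w_{a+1},\dots,w_{i-1}$. As $|M|=(j+1)-a$, unrolling yields $F(y_j)=F(y_a)\cup\bigl(M\setminus\{m^{\ast}\}\bigr)=F(y_{j+1})\setminus\{m^{\ast}\}$ with $m^{\ast}=\max_{\preceq_1}M$. On the other hand $y_a=x_a\preceq x_j\preceq x_{j+1}=y_{j+1}$ with $\rho(x_j)=\rho(y_{j+1})-1$, so $F(x_j)=F(y_{j+1})\setminus\{m'\}$ for a single atom $m'$, which lies in $M$ because $v_{m'}(x_j)\ge v_{m'}(y_a)$ forces $v_{m'}(y_{j+1})>v_{m'}(y_a)$. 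If $m'=m^{\ast}$ then $F(x_j)=F(y_j)$, hence $x_j=y_j$ by Lemma \ref{lemma1} — impossible; so $m'\prec_1 m^{\ast}$. Comparing the two factorizations gives $F(y_j)\setminus F(x_j)=\{m'\}$ and $F(x_j)\setminus F(y_j)=\{m^{\ast}\}$, so Lemma \ref{lem:0} yields $y_j\prec_j x_j$, contradicting $x_j\prec_j y_j$. This contradiction proves the reduced claim, and hence the theorem.

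The step I expect to be the main obstacle is the third paragraph: recognizing that "each $y_i$ is $\preceq_i$-minimal in its window'' is exactly the hypothesis of Lemma \ref{lem:1}, and then tracking multiplicities correctly through the multiset differences — in particular that the gap between $F(y_j)$ and $F(y_{j+1})$ is precisely one copy of the $\preceq_1$-largest atom of $M$, while the gap to $F(x_j)$ is some strictly $\preceq_1$-smaller atom of $M$. Once the facets are identified, the reduction in the first paragraph and the bookkeeping of "each restriction face appears for the first time exactly once'' are routine, with only the top convention $y_{l+1}=x$ needing to be carried along uniformly with the generic covers.
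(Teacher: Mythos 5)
Your first two paragraphs match the paper's strategy: reduce to showing that some $y_i$ with $x_i\neq y_i$ is not the $\preceq_i$-minimum of its window $W_i(Y)$, and feed Lemma~\ref{lem:1} the cover chain $y_a\covered\cdots\covered y_{j+1}$. The gap is in the third paragraph. You assert that a cover $y_{i-1}\covered y_i$ forces $F(y_i)\setminus F(y_{i-1})$ to be a single atom, and more generally that $|M|=|F(y_{j+1})\setminus F(y_a)|=(j+1)-a$. Both rest on the identity $\sum_{w\in P(1)}v_w(y)=\rho(y)$, i.e.\ factorization length equals rank. That is not an axiom of a power lattice; property~(ii) of Definition~\ref{def:powerlattice} only says the total valuation is a well-defined (strictly increasing) function of the rank, not that it is the identity. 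It fails for exactly the class the paper highlights in Example~\ref{exa:1}(8): in the lattice of flats of $U_{2,k}$ with $k\geq 3$, or of a projective plane $PG(2,q)$, one has $v_w(x)\in\{0,1\}$ and $T(x):=\sum_w v_w(x)=|A(x)|$, so the top (a rank-$2$ line with $k$ or $q+1$ atoms) gives $T(x)>\rho(x)$. Concretely, for the cover $p'\covered L'$ in $PG(2,q)$, $F(L')\setminus F(p')$ has $q$ elements, not one; and with $a=0$, $j=1$ you would need $|M|=2$ but $|M|=|F(\hat 1)|=q^2+q+1$. Consequently, the unrolled formulas $F(y_j)=F(y_{j+1})\setminus\{m^\ast\}$ and $F(x_j)=F(y_{j+1})\setminus\{m'\}$ are simply false for such power lattices, and the final Lemma~\ref{lem:0} comparison does not go through.

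The paper's proof never uses the cardinality of a multiset difference. After choosing $r$ and $s$ as you did (your $a$ and $j$), it picks a further index $j$ as the largest one in $[r,s)$ with $y_j\preceq x_s$, and then derives a contradiction solely from a chain of $\preceq_1$-inequalities among the $\min_{\preceq_1}$ of various multiset differences, using Lemma~\ref{lem:1} and Lemma~\ref{lem:0}. That route sidesteps the ``factorization length $=$ rank'' assumption entirely, which is essential since a power lattice is only required to be semimodular, not modular, and need not satisfy $\rho(x)=\sum_w v_w(x)$. To repair your argument you would either need an extra hypothesis to that effect (which would exclude the equicardinal geometric lattices that motivate the definition) or replace the counting step with an inequality-only argument in the style of the paper.
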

\begin{proof}
Suppose that $S_{x}=S_{z_{l+1}}$ where $z_{l+1}\in P(l+1)$. Let $x_l,y_l\in P(l)$ such that $x_l\prec z_{l+1}$ and $y_l\prec z_{l+1}$.
Suppose that $X=(x_0\prec \dots \prec x_l)$ and $Y=(y_0\prec \dots \prec y_l)$ are two distinct maximal chains in $P$ such that $X\triangleleft Y$. We want to find a chain $U\triangleleft Y$ such that $X\cap Y\subset U\cap Y$ and $|U\cap Y|=l$.

Let $x_{l+1}=y_{l+1}=z_{l+1}$. Let $s$ be the largest integer such that $s\leq l$ and $x_s\neq y_s$.
Therefore, $x_s\prec_s y_s$ and $x_{s+1}=y_{s+1}$. 
Since $x_0=y_0$, we can find integer $r$ such that $0\leq r<s$, $x_r=y_r$ and $x_i\neq y_i$ for $r< i \leq s$. If $r=s-1$, then take the chain 
\[
U=(y_0\prec \dots \prec y_{s-1} \prec x_{s} \prec y_{s+1}\prec \dots \prec y_l).
\]
Otherwise, assume that $r\leq s-2$. We claim that $y_i\neq \min_{\preceq_{i}}\{ a\in P\colon y_{i-1}\prec a\prec y_{i+1} \}$ for some $r<i\leq s$. In this case we take the chain
\[
U=(y_0\prec \dots \prec y_{i-1}\prec  \min_{\preceq_{i}}\{ a\in P\colon y_{i-1}\prec  a\prec y_{i+1} \} \prec y_{i+1}\prec \dots \prec y_l).
\]
That would conclude the proof.

\textbf{Proof of claim}: Assume that for all $i$ such that $r<i\leq s$, $y_i= \min_{\preceq_{i}}\{ a\in P\colon y_{i-1}\prec a\prec y_{i+1} \}$.

Let $j$ be the largest integer such that $r\leq j < s$ and $y_j\preceq x_{s}$. Such $j$ exists since $y_r=x_r\prec x_s$.

First, since $y_j\prec y_s\prec y_{s+1}$, then $F(y_j)\subset  F(y_s)\subset  F(y_{s+1})$. Therefore
\begin{equation}\label{1}
    \min_{\preceq_{1}}F(y_{s+1}) \backslash F(y_j) \preceq_1  \min_{\preceq_{1}} F(y_{s+1}) \backslash F(y_s)
\end{equation}
Now, since $x_s\prec x_{s+1} = y_{s+1}$, then $F(x_s)\subset F(y_{s+1})$. Therefore,
\begin{equation}\label{2}
    \min_{\preceq_1} F(y_{s+1})\backslash F(y_s) \preceq_{1} \min_{\preceq_1}F(x_s)\backslash F(y_s)
\end{equation}

Equations \eqref{1} and \eqref{2} imply that 

\begin{equation}\label{3}
    \min_{\preceq_{1}}F(y_{s+1}) \backslash F(y_j) \preceq_{1} \min_{\preceq_1}F(x_s)\backslash F(y_s)
\end{equation}

Now, since $x_s\prec_s y_s$, by Lemma \ref{lem:0}, $\min_{\preceq_1}F(x_s)\backslash F(y_s)\prec_1 \min_{\preceq_1}F(y_s)\backslash F(x_s)$ and together with Equation \eqref{3}, this implies that
\begin{equation}\label{4}
    \min_{\preceq_{1}}F(y_{s+1}) \backslash F(y_j) \prec_{1} \min_{\preceq_1}F(y_s)\backslash F(x_s)
\end{equation}
Now, by Lemma \ref{lem:1}, we have $a:=\min_{\preceq_{1}}F(y_{s+1}) \backslash F(y_j) = \min_{\preceq_{1}}F(y_{j+1}) \backslash F(y_j)$ and the second equality implies that $v_a(y_{j}) < v_a(y_{j+1}) \leq v_a(y_s)$.
Hence $a\in F(y_{j+1})\subseteq F(y_s)$. If $a\in F(y_s)\backslash F(x_s)$, then $\min_{\preceq_1} F(y_s)\backslash F(x_s) \preceq_1 a$ i.e.
\[
\min_{\preceq_1} F(y_s)\backslash F(x_s) \preceq_1 \min_{\preceq_{1}}F(y_{s+1}) \backslash F(y_j).
\]
Together with Equation \eqref{4}, this gives $\min_{\preceq_1} F(y_s)\backslash F(x_s) \prec_{1} \min_{\preceq_1}F(y_s)\backslash F(x_s)$. 
This is impossible and therefore, $a\notin F(y_s)\backslash F(x_s)$. 
This implies that $v_a(y_s) \leq v_a(x_s)$ and thus, $v_a(y_j) <v_a(y_{j+1})\leq v_a(y_s) \leq v_a(x_s)$. Since $a\preceq y_s$, this also implies that $a\preceq x_s$. Let $t = v_a(y_{j+1})$ and therefore, $a^t \npreceq y_j$, but $a^t \preceq y_{j+1} $ and $a^t \preceq x_s$.
Since $y_j\prec x_s$, then $a^t\vee y_j \preceq x_s$. 
Now $y_j\preceq a^t\vee y_j\preceq y_{j+1}$. The first inequality is strict because $a^t\npreceq y_j$ and thus comparing the ranks, we must have  $a^t\vee y_j= y_{j+1}$. 
Therefore, $y_{j+1}\preceq x_s$ and 
by the property of $j$, we have $j+1\geq s$. Hence $y_s\preceq y_{j+1}\preceq x_s$.
Since $\rho(y_s)=\rho(x_s)$, then $x_s=y_s$ which is a contradiction. 
Hence we conclude that there exists $r<i\leq s$ such that $y_i\neq \min_{\prec_{i}}\{ a\in P\colon y_{i-1}\preceq a\prec y_{i+1} \}$ and we are done.
\end{proof}

\begin{definition}
Let $P$ be a power lattice and suppose $S$ is a shellable $\PP$-complex of rank $r$ with the linear ordering of facets given by $\preceq_r$. We define a linear ordering $\ll$ on the maximal chains of $S$ as follows. Suppose $X=(x_0\prec x_1\prec \dots \prec x_r)$ and $Y=(y_0\prec y_1\prec \dots \prec y_r)$ with $X\neq Y$. Define $X\ll Y$ if
either $x_r\prec_l y_r$ or  
    \[ x_r=y_r, \quad  \text{ and }  \quad (x_0\preceq \dots \preceq x_{r-1})\triangleleft (y_0\preceq \dots \preceq y_{r-1}).
\]
\end{definition}

This is again a total order. Now, we show the main theorem of this section relating the shellability of a $\PP$-complex to the shellability of the associated order complex.

\begin{theorem}\label{thm:shellabilityordercompex}
Let $P$ be a power lattice. If $S$ is a shellable $\PP$-complex with the linear ordering of facets given by $\preceq_r$, then the order complex $\mathcal{K}(S)$ is a shellable simplicial complex whose facets are ordered with $\ll$.
\end{theorem}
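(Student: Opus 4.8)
The strategy is to verify the shellability condition directly for the facet ordering $\ll$ on $\mathcal{K}(S)$. Take two maximal chains $X = (x_0 \prec \dots \prec x_r)$ and $Y = (y_0 \prec \dots \prec y_r)$ of $S$ with $X \ll Y$; I must produce a maximal chain $U \ll Y$ with $X \cap Y \subseteq U \cap Y$ and $|U \cap Y| = r$ (i.e., $U$ and $Y$ differ in exactly one entry). There are two cases coming from the definition of $\ll$. First, if $x_r \prec_r y_r$, then since $S$ is a $\PP$-shellable $\PP$-complex, the facet ordering $\preceq_r$ gives an index $k$ with $f_k$ a facet, $f_k \neq y_r$, $x_r \wedge y_r \preceq f_k \wedge y_r$ and $\rho(f_k \wedge y_r) = r-1$; moreover because $\preceq_r$ linearly orders facets and $x_r \prec_r y_r$, I can arrange $f_k \prec_r y_r$. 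Then $f_k \wedge y_r$ is an element of rank $r-1$ below $y_r$, so there is a maximal chain through it inside the sphere $S_{y_r}$; extend it to a maximal chain $U$ of $S$ ending at $f_k$, chosen so that $U \cap Y$ contains $x_r \wedge y_r$ and has exactly the element $f_k \wedge y_r$ at level $r-1$ — this forces $|U \cap Y| = r$. Since $f_k \prec_r y_r$, the first clause of $\ll$ gives $U \ll Y$.

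The second, more delicate case is $x_r = y_r =: z$ and $(x_0 \prec \dots \prec x_{r-1}) \triangleleft (y_0 \prec \dots \prec y_{r-1})$ in the reverse lexicographic order on the $(r-1)$-sphere $S_z$. Here the plan is to apply Theorem~\ref{shellabilitysphere}: the order complex $\mathcal{K}(S_z)$ of the $(r-1)$-sphere $S_z$ is shellable with respect to $\trianglelefteq$. That theorem, applied to the two maximal chains $(x_0 \prec \dots \prec x_{r-1})$ and $(y_0 \prec \dots \prec y_{r-1})$ of $S_z$ with $X' \triangleleft Y'$, yields a maximal chain $U' = (u_0 \prec \dots \prec u_{r-1})$ of $S_z$ with $U' \triangleleft Y'$, $X' \cap Y' \subseteq U' \cap Y'$, and $|U' \cap Y'| = r-1$ (so $U'$ and $Y'$ differ in a single entry). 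Appending $z$ gives $U = (u_0 \prec \dots \prec u_{r-1} \prec z)$, a maximal chain of $S$. Then $X \cap Y \subseteq U \cap Y$ (both contain $z$, and the lower parts satisfy the inclusion), $|U \cap Y| = r$, and since $u_r = z = y_r$ while $U' \triangleleft Y'$, the second clause of $\ll$ gives $U \ll Y$.

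The main obstacle is the bookkeeping in the first case: ensuring that the maximal chain $U$ through $f_k \wedge y_r$ can be chosen so that $U \cap Y$ is exactly $\{\, w \in X \cap Y : \rho(w) \le r-1,\ w \preceq f_k \wedge y_r \,\} \cup \{f_k \wedge y_r\}$ and in particular has cardinality exactly $r$ — not more. This requires completing the chain below $f_k \wedge y_r$ along elements of $X \cap Y$ where possible (using $x_r \wedge y_r \preceq f_k \wedge y_r$, so the low part of $X$ that lies in $Y$ also lies below $f_k \wedge y_r$), and above $f_k \wedge y_r$ (i.e., between $f_k \wedge y_r$ and $f_k$) choosing any saturated chain, which meets $Y$ only possibly at the top — but the top of $U$ is $f_k \neq y_r = $ top of $Y$, so no extra agreement occurs there. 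A short argument that $f_k \wedge y_r$ is the unique common element at level $r-1$ (if $U$ and $Y$ agreed at a level $\le r-1$ strictly above where they must, one derives $f_k \succeq y_r$, contradicting $f_k \prec_r y_r$) closes the gap. Once these two cases are handled, every pair $X \ll Y$ admits the required $U$, so $\mathcal{K}(S)$ is shellable with facet order $\ll$.
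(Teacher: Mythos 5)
Your Case 2 (the $x_r = y_r$ case) matches the paper's first case and is sound: pass to the $(r-1)$-sphere $S_z$ with $z = x_r = y_r$, invoke Theorem~\ref{shellabilitysphere}, and append $z$. (The paper truncates at $e+1$, the lowest level from which $X$ and $Y$ agree upward, rather than at $r$, but this is an inessential variation.)

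Your Case 1 (the $x_r \prec_r y_r$ case) has a genuine gap. After shellability of $S$ hands you a facet $f_k \prec_r y_r$ with $x_r \wedge y_r \preceq f_k \wedge y_r$ and $\rho(f_k \wedge y_r) = r-1$, you try to build $U$ directly by running a saturated chain through $f_k \wedge y_r$ up to $f_k$, and you assert that this forces $|U \cap Y| = r$. It does not: $|U \cap Y| = r$ requires $U$ and $Y$ to coincide at exactly $r$ of the $r+1$ levels, i.e., to differ in a single entry. But $f_k \wedge y_r$ need not equal $y_{r-1}$ (both are rank-$(r-1)$ elements below $y_r$, but nothing forces them to coincide), so $U$ and $Y$ may already disagree at level $r-1$ as well as at level $r$; moreover the elements of $U$ at levels $0,\ldots,r-2$ come from an arbitrary saturated chain below $f_k \wedge y_r$ and there is no mechanism making them agree with $y_0,\ldots,y_{r-2}$. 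Your ``completing the chain along elements of $X \cap Y$ where possible'' cannot rescue this, since $X \cap Y$ may be far smaller than $r$, and the ``short argument'' about uniqueness at level $r-1$ addresses a different issue and leaves the cardinality bound unproved.

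The missing idea, and the crux of the paper's proof of this case, is to \emph{not} build $U$ by hand but to invoke Theorem~\ref{shellabilitysphere} a second time, now inside the $r$-sphere $S_{y_r \vee z_r}$ (with $z_r = f_k$). Semimodularity together with $\rho(z_r) = \rho(y_r) = r$ and $\rho(z_r \wedge y_r) = r-1$ forces $\rho(y_r \vee z_r) = r+1$, so both $y_r$ and $z_r$ lie below the rank-$(r+1)$ element $y_r \vee z_r$. One then compares $Y$ with a chain $Z$ that agrees with $Y$ up to the largest index $t$ with $y_t \prec z_r$ and then runs up to $z_r$; since $Z \triangleleft Y$ and $X \cap Y \subseteq Z \cap Y$, applying Theorem~\ref{shellabilitysphere} in $S_{y_r \vee z_r}$ returns the desired $U$ with $X \cap Y \subseteq U \cap Y$, $|U \cap Y| = r$, and $U \ll Y$. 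Without this second application of the sphere theorem, the argument in your Case 1 does not close.
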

\begin{proof}
Let $X=(x_0\prec \dots \prec x_r)$ and $Y=(y_0\prec \dots \prec y_r)$ be two distinct maximal chains in $P$ such that $X\ll Y$. Let $e$ be the largest index such that $x_e\neq y_e$.

\textbf{Case 1.} Suppose that $e<r$ and thus $x_{e+1} = x_{e+1}$, $e\leq r$. Let $f$ be the largest index smaller than $e$ such that $U_f = V_f$. Such $f$ exists because $U_0 = V_0$. Consider the two chains
\begin{align*}
    &\overline{X}=(y_0\preceq \dots \preceq y_{f}\preceq x_{f+1} \preceq \dots \preceq x_{e})\\
    &\overline{Y}=(y_0\preceq \dots \preceq y_{f}\preceq y_{f+1} \preceq \dots \preceq y_{e}) 
\end{align*}
We see that $\rho(x_{e})=\rho(y_{e})=e$ and $x_e\preceq x_{e+1}$, $y_e\preceq x_{e+1}$, with $\rho(x_{e+1})=e+1$. 
Applying Theorem \ref{shellabilitysphere}, we can find a chain $\overline{U}=(u_0\preceq \dots \preceq u_{e})$ such that $\overline{X}\cap\overline{Y}\subset \overline{U}\cap \overline{Y}$, $|\overline{U}\cap \overline{Y}|=e$ and $\overline{U}\triangleleft \overline{Y}$. Take $U=(u_0\preceq \dots \preceq u_{e}\preceq y_{e+1}\preceq \dots \preceq y_{r})$ and we have $X\cap Y\subset U\cap Y$, $|U\cap Y|=r$ and $U\ll Y$.

\textbf{Case 2.} Suppose that $e=r$, we have $x_r\prec_r y_r$. Then let $f$ be the largest index such that $x_f=y_f$. Again $f$ exists because $x_0 = y_0$. Now, by the shellability of $S$, there is $z_r$ such that $z_r\prec_r y_r$, $\rho(z_r\wedge y_r)=r-1$ and $x_r\wedge y_r\subset z_r\wedge r_r$. Let $t$ be the largest index such that $y_t\prec z_r$. Then $t\geq f$. Moreover, since $z_r\neq y_r$ and $\rho(z_r)=\rho(y_r)$, then $y_r\npreceq z_r$ i.e. $t<r$.
Now, take the chains
\begin{align*}
    &Z=(y_0\prec \dots \prec y_{f}\prec \dots \prec y_{t}\prec z_{t+1}\prec \dots \prec z_{r}),\\
    &Y=(y_0\prec \dots \prec y_{f}\prec \dots \prec y_{t}\prec y_{t+1}\prec \dots \prec y_{r}).
\end{align*}
Since $y_r\neq z_r$, then $y_r\prec y_r\vee z_r$. Therefore $r=\rho(y_r)<\rho(y_r\vee z_r)\leq \rho(y_r)+\rho(z_r)-\rho(y_r\wedge z_r)$ (because of the semimodularity of $P$). Since $\rho(z_r) = \rho(y_r) = r$ and $\rho(z_r\wedge y_r) = r-1$, then $r<\rho(y_r\vee z_r)\leq r+1$. Therefore $z_r$ and $y_r$ precede $y_r+z_r$ which is of rank $r+1$. Thus we can apply Theorem \ref{shellabilitysphere} to find a chain $U=(u_0\preceq \dots \preceq u_{e})$ such that $Z\cap  Y\subset U\cap Y$, $|U\cap Y|=e$ and $U\ll Y$. This concludes the proof.
\end{proof}

The main consequence of this theorem is that the homotopy type of a shellable $\PP$-complex is understood. 

As topological spaces, a finite poset and its associated order complex are weakly homotopy equivalent \cite{McCord}. Moreover,  a shellable simplicial complex has the same homotopy type as a wedge of spheres, i.e., the reduced simplicial homology is zero except at maximal dimension \cite[Appendix]{Bjo80}. Therefore, by Theorem \ref{thm:shellabilityordercompex}, we have the following corollary that generalizes \cite{GPTVW24} and \cite[Corollary 3.10]{Cim06}.
\begin{corollary}
Let $P$ be a power lattice. If $S$ is a shellable $\PP$-complex of rank $r$ with the linear ordering of facets given by $\preceq_r$, then it has the homotopy type of a wedge of spheres.
\end{corollary}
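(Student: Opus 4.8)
The plan is to obtain the corollary as an essentially immediate consequence of Theorem~\ref{thm:shellabilityordercompex} together with two classical facts, about finite posets on the one hand and shellable simplicial complexes on the other; since the substantive work has already been carried out in that theorem, I do not expect any serious obstacle at this stage.

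First I would pass from $S$ to its order complex $\mathcal{K}(S)$. The topological space attached to a $\PP$-complex is, by convention, the geometric realization $|\mathcal{K}(S)|$ of its order complex; alternatively, regarding the poset $S$ itself as a finite topological space, McCord's theorem~\cite{McCord} provides a weak homotopy equivalence between $S$ and $|\mathcal{K}(S)|$. Either way, it suffices to determine the homotopy type of $|\mathcal{K}(S)|$.

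Next I would invoke Theorem~\ref{thm:shellabilityordercompex}, which states precisely that $\mathcal{K}(S)$ is a shellable simplicial complex, with the facets ordered by $\ll$. I would also note that $\mathcal{K}(S)$ is pure of dimension $r$: since $S$ is downward closed it contains $\0$, so every maximal chain of $S$ runs from $\0$ up to some facet $f$ of $S$; purity of $S$ gives $\rho(f)=r$; and because the whole interval $[\0,f]$ lies in $S$, such a chain is also a maximal chain of the graded lattice $P$ between $\0$ and $f$, hence has exactly $r+1$ elements by the Jordan--H\"older property of graded lattices. Thus every facet of $\mathcal{K}(S)$ is an $r$-simplex.

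Finally, by the classical result of Bj\"orner~\cite[Appendix]{Bjo80}, a shellable simplicial complex has the homotopy type of a wedge of spheres, and being pure of dimension $r$ the wedge consists of $r$-spheres, so that the reduced homology of $|\mathcal{K}(S)|$ vanishes outside degree $r$. Combining this with the previous step gives the claim. The only point requiring any care is a bookkeeping one, namely checking that the ``space associated to $S$'' is indeed the one to which McCord's equivalence and Bj\"orner's theorem apply; once $\mathcal{K}(S)$ has been interpolated, the argument is routine and rests entirely on the already-established shellability of $\mathcal{K}(S)$.
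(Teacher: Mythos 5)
Your argument coincides with the paper's: pass to the order complex $\mathcal{K}(S)$, invoke Theorem~\ref{thm:shellabilityordercompex} for its shellability, then combine McCord's weak homotopy equivalence between a finite poset and its order complex with Bj\"orner's result that a shellable simplicial complex is a wedge of spheres. The extra observation about purity of $\mathcal{K}(S)$ is a minor elaboration that the paper leaves implicit.
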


As mentioned in the introduction, another way to obtain shellable simplicial complexes is to start with EL-shellable or CL-shellable posets. The order complexes associated to these posets give us a shellable simplicial complex \cite{Bjo80,BW83}. Our condition is different as instead of using the EL/CL-shellability, we are using the $\PP$-shellability of the poset as a $\PP$-complex in a power lattice. Shellable simplicial complexes are of interests as they have nice topological and algebraic properties and the construction via the order complex provides new classes of shellable simplicial complexes.
\section{Matroids in power lattices}\label{sec:matroidsinpowerlattices}

In the classical theory of matroids, the independent sets of a matroids form a shellable simplicial complex. To continue with the analogy, we want to provide constructions of shellable $\PP$-complexes via a generalization of matroids. Then we show how to get shellable $\PP$-complexes from the matroids. 

\begin{definition}
Let $(P,\preceq)$ be a power lattice with rank function $\rho$. A matroid $(P,\I)$  on $P$ is defined by a subset $\I$ of $P$ such that
\begin{enumerate}[({I}1)]
    \item\label{I1} $\0\in \I$,
    \item\label{I2} if $x\preceq y$ and $y\in \I$, then $x\in \I$,
    \item\label{I3} For all $x,y\in \I$ such that $\rho(x)<\rho(y)$, there exists an atom $a\in P$ such that $v_a(x) < v_a(y)$, $x\vee a^{v_a(x)+1} \in \I$.
\end{enumerate}
\end{definition}

\begin{remark}
    Notice that, this definition is more general to the definition of $q$-matroids in the lattice of subspaces \cite{JP18}. However, the axioms in the definition were enough to get shellability of both for matroid and $q$-matroid \cite{GPR22,Bjo92}.
\end{remark}

\begin{example}\
\begin{enumerate}
    \item Take $(P,P)$. This is a trivial matroid.
    \item  More generally we can define the uniform matroids $U_k(P)$. Let $P$ be a power lattice with $\rho(\1)=n$. Suppose $k\leq n$ and let $\I=U_k(P):=\{x\in P\colon \rho(x)\leq k\}$. Then (I1) and (I2) are clear. To check (I3). Suppose that $\rho(x)<\rho(y)\leq k$ and hence $\rho(x)\leq k-1$. There exists an atom $a\in P$ such that $v_a(x)<v_a(y)$. Now take $x\vee a^{v_a(x)+1}$. 
    By semimodularity, we have
    \begin{align*}
    \rho(x\vee a^{v_a(x)+1})&\leq \rho(x)+\rho(a^{v_a(x)+1})-\rho(x\wedge a^{v_a(x)+1})\\
    &\leq \rho(x)+\rho(a^{v_a(x)+1})-\rho(a^{v_a(x)})\\
    &\leq \rho(x)+1\\
    &\leq k
    \end{align*}
    Hence $x\vee a^{v_a(x)+1}\in \I$.
\end{enumerate}
\end{example}

\begin{example}\
    \begin{enumerate}[(i)]
        \item The classical matroids on the lattice of subsets \cite{Oxley}. They consist of a set $I$ of a subsets of subsets of a finite set. $I$ is closed under inclusion and for two subsets $A,B\in I$ such that $|A|<|B|$, there exists $x\in B\setminus A$ such that $A\cup \{x\}\in I$.
        \item The relatively recent $q$-matroids on the lattice of subspaces \cite{JP18}. They consist of a set $I$ of subspaces of a vector space $\Fq^n$. $I$ is closed under inclusion and for two subspaces $A,B\in I$ such that $\dim A<\dim B$, there exists $x\in B\setminus A$ such that $A\cup \{x\}\in I$.
        \item The discrete polymatroids on the lattice of multiset subsets \cite{HH02}. They consist of a set $I$ of multiset subsets of a finite multiset. $I$ is closed under inclusion and for two subsets $A,B\in I$ such that $|A|<|B|$, there exists $x\in B\setminus A$ (the difference takes into account the multiplicity) such that $A\cup \{x\}\in I$, the last union means that the multiplicity of $x$ in $A$ is increased by $1$.
        \item The sum-matroids on the Cartesian product of lattice of subspaces. Although in \cite{PPR23}, the sum matroids were defined by rank functions, one can show that by taking the independent elements, i.e. the elements such that whose rank is the same as the total dimension, we get a matroid over the Cartesian product of lattice of subspaces.
    \end{enumerate}
\end{example}

\begin{definition}
    Let $(P,\preceq)$ be a power lattice with rank function $\rho$ and let $(P,\I)$ be a matroid in $P$, the elements of $\I$ are called the independent elements of $(P,\I)$. A maximum element in $\I$ with respect to $\preceq$ is called a basis of $(P,\I)$. Let $\B$ denote the set of all bases of $(P,\I)$.
\end{definition}

\begin{lemma}\label{lem:samerank}
    Let $(P,\preceq)$ be a power lattice with rank function $\rho$ and let $(P,\I)$ be a matroid in $P$. Let $\B$ be the set of bases of the matroid. All elements of $\B$ have the same rank.
\end{lemma}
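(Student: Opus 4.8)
The plan is to mimic the classical exchange argument showing all bases of a matroid have the same size. Suppose for contradiction that $B_1, B_2 \in \B$ have $\rho(B_1) < \rho(B_2)$. Both are independent, so axiom (I3) applies with $x = B_1$ and $y = B_2$: there is an atom $a \in P(1)$ with $v_a(B_1) < v_a(B_2)$ and $B_1 \vee a^{v_a(B_1)+1} \in \I$. The crux is then to show that $B_1 \vee a^{v_a(B_1)+1}$ strictly dominates $B_1$ in $\preceq$, which contradicts $B_1$ being a maximum element of $\I$.

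So the key step is: if $v_a(B_1)+1 \le v_a(B_2)$, then $B_1 \prec B_1 \vee a^{v_a(B_1)+1}$. Write $t = v_a(B_1)+1$, so $a^t \in \power{a}$ with $v_a(a^t) = t$. By definition $B_1 \preceq B_1 \vee a^t$, so it suffices to rule out equality. If $B_1 = B_1 \vee a^t$, then $a^t \preceq B_1$, which by Lemma~\ref{lem:iniqualityvaluation} gives $v_a(B_1) \ge v_a(a^t) = t = v_a(B_1)+1$, a contradiction. Hence $B_1 \prec B_1 \vee a^t$, and since $B_1 \vee a^t \in \I$ by (I3), this contradicts the maximality of $B_1$ in $\I$ with respect to $\preceq$.

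Therefore no two bases can have distinct ranks, i.e., all elements of $\B$ have the same rank. I expect the main (and only) subtlety to be invoking the valuation machinery correctly — specifically that $v_a(x^{v_a(x)+1}) = v_a(x)+1$ follows directly from the convention $y = x^{\rho(y)} = x^{v_x(y)}$ for powers of an atom established just after Definition~\ref{def:powerlattice} — but this is immediate, so the argument is short. One should also note that (I3) guarantees such an atom exists at all; no appeal to semimodularity or the finer structure of power lattices is needed here.
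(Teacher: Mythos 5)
Your proof is correct and takes essentially the same approach as the paper: assume two bases of different rank, apply (I3) to obtain an independent element $x \vee a^{v_a(x)+1}$ that strictly dominates $x$, contradicting maximality. The only cosmetic difference is that you rule out equality by noting $a^t \preceq B_1$ would contradict Lemma~\ref{lem:iniqualityvaluation}, whereas the paper directly cites Lemma~\ref{lem:valuation}(2) to see the valuation at $a$ strictly increases.
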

\begin{proof}
    Let $x,y\in \B$. Suppose $\rho(x)<\rho(y)$. By (I\ref{I3}), there exists an atom $a\in P$ such that $v_a(x) < v_a(y)$, $x\vee a^{v_a(x)+1} \in \I$. It is clear that $x\preceq x\vee a^{v_a(x)+1}$ and by Lemma \ref{lem:valuation}, $v_a(x)<v_a(x\vee a^{v_a(x)+1})$ and this implies $x\neq x\vee a^{v_a(x)+1}$. This contradicts the maximality of $x$ in $\I$. Therefore $\rho(x)=\rho(y)$.
\end{proof}

Similar to classical matroids and $q$-matroids, the bases of a matroid satisfies the following nice properties.
\begin{theorem}
    Let $(P,\preceq)$ be a power lattice with rank function $\rho$ and let $(P,\I)$ be a matroid in $P$. Let $\B$ be the set of bases of the matroid. Then
    \begin{enumerate}[({B}1)]
    \item\label{B1} $\B\neq \emptyset$,
    \item\label{B2} For all $x,y\in \B$, if $x\preceq y$, then $x=y$,
    \item\label{B3} For all $x,y\in \B$, and for each $u\preceq x$ with $\rho(u)=\rho(x)-1$ and $x\wedge y\preceq u$, there exists an atom $a$ such that $v_a(u) < v_a(y)$ and $u\vee a^{v_a(u)+1}\in \B$.
\end{enumerate}
\end{theorem}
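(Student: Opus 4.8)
The plan is to dispose of (B\ref{B1}) and (B\ref{B2}) immediately, then isolate a rank characterization of bases that does the real work, and finally combine it with (I\ref{I3}) and one semimodularity computation to obtain (B\ref{B3}).

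For (B\ref{B1}): by (I\ref{I1}) we have $\0\in\I$, so $\I\neq\emptyset$, and since $P$ is finite, $\I$ has a $\preceq$-maximal element, which is a basis by definition. For (B\ref{B2}): if $x,y\in\B$ with $x\preceq y$, then $x=y$, since $x$ is maximal in $\I$ and $y\in\I$ lies above it. The crucial intermediate step is the following \emph{rank characterization}: writing $\rho_0$ for the common rank of the bases (well defined by Lemma \ref{lem:samerank}), every element of $\I$ has rank at most $\rho_0$, and hence any $z\in\I$ with $\rho(z)=\rho_0$ is automatically a basis. Indeed, if some $z\in\I$ had $\rho(z)>\rho_0=\rho(b)$ for a basis $b$, then (I\ref{I3}) applied to $(b,z)$ would give an atom $a$ with $v_a(b)<v_a(z)$ and $b\vee a^{v_a(b)+1}\in\I$; by Lemma \ref{lem:valuation}(2) this element has strictly larger $a$-valuation than $b$, hence strictly contains $b$, contradicting maximality of $b$. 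Given this bound, an independent element of rank $\rho_0$ cannot sit strictly below another element of $\I$, so it is maximal, i.e.\ a basis.

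For (B\ref{B3}), take $x,y\in\B$ and $u\preceq x$ with $\rho(u)=\rho(x)-1$ and $x\wedge y\preceq u$ (this last hypothesis is in fact not needed for the conclusion as stated; it is recorded only to mirror the classical exchange axiom). By (I\ref{I2}), $u\in\I$, and $\rho(u)=\rho_0-1<\rho_0=\rho(y)$. Applying (I\ref{I3}) to the pair $(u,y)$ produces an atom $a$ with $v_a(u)<v_a(y)$ and $u':=u\vee a^{v_a(u)+1}\in\I$, so by the rank characterization it suffices to prove $\rho(u')=\rho(u)+1$. The one computation is to identify the meet $u\wedge a^{v_a(u)+1}$: it lies below the power $a^{v_a(u)+1}$, so by Lemma \ref{lem:iniqualityvaluation} it has no atom other than $a$ below it and is therefore $\0$ or a power of $a$, and by Lemma \ref{lem:valuation}(1) its $a$-valuation is $\min\{v_a(u),v_a(u)+1\}=v_a(u)$; since the $a$-valuation of a power of $a$ equals its rank, $\rho(u\wedge a^{v_a(u)+1})=v_a(u)$. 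Semimodularity then gives
\[
\rho(u')\le \rho(u)+\rho(a^{v_a(u)+1})-\rho(u\wedge a^{v_a(u)+1})=\rho(u)+(v_a(u)+1)-v_a(u)=\rho(u)+1,
\]
while $u'\succeq a^{v_a(u)+1}$ forces $v_a(u')\ge v_a(u)+1>v_a(u)$, so $u\prec u'$ and hence $\rho(u')\ge\rho(u)+1$. Thus $\rho(u')=\rho(u)+1=\rho_0$, so $u'\in\B$, which is exactly (B\ref{B3}) for this atom $a$.

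I expect the rank identity $\rho(u\vee a^{v_a(u)+1})=\rho(u)+1$ to be the only genuine obstacle: it rests on pinning down the meet $u\wedge a^{v_a(u)+1}$ exactly as the power $a^{v_a(u)}$ — using that elements below a power of an atom are again powers of that atom, and Definition \ref{def:powerlattice}(i) that such powers are determined by their rank — and then applying semimodularity in the correct direction and combining the two rank inequalities to force equality. Everything else reduces to the matroid axioms, maximality, and Lemma \ref{lem:samerank}.
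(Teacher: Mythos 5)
Your proposal is correct and follows the same route as the paper: (B1) and (B2) from finiteness and maximality, and (B3) by applying (I2) to get $u\in\I$, then (I3) to the pair $(u,y)$, and finally a rank argument to upgrade $u\vee a^{v_a(u)+1}\in\I$ to a basis; the paper is simply terser, leaving the rank bound on $\I$ implicit. One small redundancy on your side: once you have established that every element of $\I$ has rank at most $\rho_0$, the semimodularity computation showing $\rho(u')=\rho(u)+1$ is unnecessary, since $u\prec u'$ already gives $\rho(u')\ge\rho_0$ and the bound gives $\rho(u')\le\rho_0$; and your observation that the hypothesis $x\wedge y\preceq u$ is not needed for (B3) as stated is correct and consistent with the paper's own proof.
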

\begin{proof}\
    \begin{enumerate}[({B}1)]
    \item $\I$ is non-empty and it is finite so there must be a least one maximal element.
    \item Clear.
    \item Let $x,y\in \B$, and let $u\preceq x$ with $\rho(u)=\rho(x)-1$ and $x\wedge y\preceq u$.
    By (I\ref{I2}) $u\in \I$. Since $\rho(u)<\rho(x)$, by Lemma \ref{lem:samerank}, $\rho(u)<\rho(y)$. Therefore, by (I\ref{I3}),  there exists an atom $a\in P$ such that $v_a(u) < v_a(y)$, $u\vee a^{v_a(u)+1} \in \I$. Now $u\prec u\vee a^{v_a(u)+1}$ and $\rho(u)=\rho(x)-1$. Therefore $u\vee a^{v_a(u)+1}$ is a basis.
    \end{enumerate}
\end{proof}

\begin{remark}
    Again, the axioms are more general than in the case of $q$-matroids.
\end{remark}


Now we show the dual basis exchange property for matroids.

\begin{theorem}[Dual basis exchange property]\label{dualbasisexchange}
    Let $(P,\preceq)$ be a power lattice with rank function $\rho$ and let $(P,\I)$ be a matroid in $P$. Let $\B$ be the set of bases of the matroid. Let $x,y\in \B$ such that $x\neq y$ and let $a$ be an atom with $v_a(y)> v_a(x)$. Then there exist $u\in P$ with $\rho(u)=\rho(x)-1$ and an atom $b$ such that $v_b(y)< v_b(x)$, $x\wedge y\preceq u$, $x=u\vee b^{v_b(u)+1}$ and $u\vee a^{v_a(u)+1}$ is a basis.
\end{theorem}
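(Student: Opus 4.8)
The plan is to imitate the circuit-free proof of the classical symmetric basis-exchange property. Write $r:=\rho(x)=\rho(y)$, which are equal by Lemma~\ref{lem:samerank}. By Lemma~\ref{lem:valuation}(1) and the hypothesis $v_a(y)>v_a(x)$ we have $v_a(x\wedge y)=\min\{v_a(x),v_a(y)\}=v_a(x)$; also $v_a(x)+1\le v_a(y)$ gives $a^{v_a(x)+1}\preceq a^{v_a(y)}\preceq y$. The idea is to form the ``forced'' independent element $I_0:=(x\wedge y)\vee a^{v_a(x)+1}$, augment it up to a basis using only atoms already lying below $x$, and then read off $u$ and $b$ from the augmentation data together with $x$.

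First I would check $I_0\in\I$: since $x\wedge y\preceq y$ and $a^{v_a(x)+1}\preceq y$, we get $I_0\preceq y$, hence $I_0\in\I$ by (I\ref{I2}) and $\rho(I_0)\le r$. Then I would augment: put $B_0:=I_0$, and while $\rho(B_i)<r$ apply (I\ref{I3}) to the pair $B_i,x\in\I$ to obtain an atom $c_i$ with $v_{c_i}(B_i)<v_{c_i}(x)$ and $B_{i+1}:=B_i\vee c_i^{v_{c_i}(B_i)+1}\in\I$. Here $B_i\prec B_{i+1}$ (because $c_i^{v_{c_i}(B_i)+1}\not\preceq B_i$), and semimodularity together with $B_i\wedge c_i^{v_{c_i}(B_i)+1}=c_i^{v_{c_i}(B_i)}$ gives $\rho(B_{i+1})=\rho(B_i)+1$; hence after finitely many steps we reach $B:=B_m$ with $\rho(B)=r$. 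Since $c_i^{v_{c_i}(B_i)+1}\preceq c_i^{v_{c_i}(x)}\preceq x$, the element $w:=\bigvee_i c_i^{v_{c_i}(B_i)+1}$ satisfies $w\preceq x$ and $B=I_0\vee w$. Finally $B\in\B$, because an independent element of rank $r$ is necessarily a basis (otherwise it would lie strictly below a basis of rank $>r$, contradicting Lemma~\ref{lem:samerank}).

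Now I would extract the data. Set $u:=(x\wedge y)\vee w$. Then $u\preceq x$ (both joinands are), $x\wedge y\preceq u$, and $v_a(u)=v_a(x)$ since $v_a(x)=v_a(x\wedge y)\le v_a(u)\le v_a(x)$ by Lemma~\ref{lem:iniqualityvaluation}; therefore $B=I_0\vee w=u\vee a^{v_a(x)+1}=u\vee a^{v_a(u)+1}$, which is a basis. For the rank: $v_a(u)=v_a(x)<v_a(B)$ gives $u\prec B$, so $\rho(u)\le r-1$, while $r=\rho(B)=\rho(u\vee a^{v_a(u)+1})\le\rho(u)+1$ by semimodularity (using $u\wedge a^{v_a(u)+1}=a^{v_a(u)}$); hence $\rho(u)=r-1$ and $u\covered x$. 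It remains to find $b$: the valuations of $u$ and $x$ cannot all agree (otherwise $F(u)=F(x)$ and $u=x$ by Lemma~\ref{lemma1}), so by Lemma~\ref{lem:iniqualityvaluation} there is an atom $b$ with $v_b(u)<v_b(x)$; then $u\prec u\vee b^{v_b(u)+1}\preceq x$ and $u\covered x$ force $x=u\vee b^{v_b(u)+1}$, and from $v_b(x\wedge y)\le v_b(u)<v_b(x)$ we get $v_b(x\wedge y)=\min\{v_b(x),v_b(y)\}=v_b(y)$, i.e.\ $v_b(y)<v_b(x)$. Thus $u$ and $b$ satisfy every condition in the statement.

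The step I expect to be the main obstacle is forcing $\rho(u)=r-1$ exactly. The naive candidate $u=B\wedge x$ does satisfy $x\wedge y\preceq u\preceq x$ and $v_a(u)=v_a(x)$, but a power lattice is only upper semimodular, so one cannot deduce $\rho(B\wedge x)=r-1$ from $\rho(B\vee x)=r+1$ — that inference needs lower semimodularity, which already fails in equicardinal geometric lattices. Choosing $u=(x\wedge y)\vee w$ from the explicit augmentation is what makes the identity $B=u\vee a^{v_a(u)+1}$ hold by construction, and then $\rho(u)$ is squeezed: $\le r-1$ because $u\prec B$, and $\ge r-1$ by semimodularity applied to that identity.
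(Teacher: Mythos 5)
Your proof is correct, and it takes a genuinely different route from the paper. The paper proves the dual exchange property by induction on $s = r - \rho(x\wedge y)$: the base case $s=1$ takes $u = x\wedge y$ directly, and the inductive step uses (B\ref{B3}) to produce an intermediate basis $t$ with $x\wedge y \prec x\wedge t$ and $v_a(t) > v_a(x)$, then applies the induction hypothesis to the pair $(x,t)$. You instead give a one-shot construction: start from $I_0 = (x\wedge y)\vee a^{v_a(x)+1} \preceq y$, run the augmentation axiom (I\ref{I3}) against $x$ to climb to a basis $B = I_0 \vee w$ where $w$ is a join of atom-powers all $\preceq x$, then set $u = (x\wedge y)\vee w$ and read off everything from $B = u \vee a^{v_a(u)+1}$. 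The key insight — that one must take $u$ from the explicit augmentation data rather than $u = B\wedge x$, because upper semimodularity alone cannot bound $\rho(B\wedge x)$ from below — is exactly right and is a subtlety that the inductive argument sidesteps differently. Your approach is more in the spirit of classical circuit-free basis-exchange proofs: it avoids the inductive bookkeeping (in particular the paper's unjustified assertion that a rank-$(r-1)$ element $w$ with $x\wedge y \prec w \prec y$ and $v_a(w) > v_a(x\wedge y)$ exists), works directly from the independence axioms rather than the derived basis axioms, and gives an explicit description of $u$ and of the basis $u\vee a^{v_a(u)+1}$; the paper's induction, on the other hand, is structurally shorter once (B\ref{B3}) is in place. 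One very minor stylistic point: the phrase ``$v_b(x\wedge y)=\min\{v_b(x),v_b(y)\}=v_b(y)$, i.e.\ $v_b(y)<v_b(x)$'' reads slightly circularly; the clean statement is that $\min\{v_b(x),v_b(y)\} = v_b(x\wedge y) < v_b(x)$ forces $v_b(y)<v_b(x)$. The mathematics is sound throughout.
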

\begin{proof}
    Let $r=\rho(x)$ and let $s=r-\rho(x\wedge y)$. If $s=1$, then $\rho(x\wedge y)=r-1$. Take $u=x\wedge y$. Then we observe that
    \begin{enumerate}[(i)]
        \item $x\wedge y\preceq u$ is true.
        \item  Since $x\neq y$ and $\rho(x)=\rho(y)$, there exists $b$ such that $v_b(y)<v_b(x)$. Moreover, we have $x\wedge y\preceq u\prec u\vee b^{v_b(u)+1}$. Since $v_b(u)\leq v_b(y)$, $v_b(u)<v_b(x)$. Therefore $ b^{v_b(u)+1}\preceq x$. Hence $x\wedge y\preceq u\prec u\vee b^{v_b(u)+1}\preceq x$. Since $\rho(x\wedge y)=r-1$ and $\rho(x)=r$, we have $u\vee b^{v_b(u)+1}= x$.
        \item Similarly, we can check that $u\vee a^{v_b(a)+1}= y$. 
    \end{enumerate}
    We proved that the statement holds for $s=1$. We now show the statement by induction on $s$. Assume that the statement holds for $r-\rho(x\wedge y)<s$, where $s>1$. We want to show that the statement holds for $r-\rho(x\wedge y)=s$. 
    
    Suppose that $r-\rho(x\wedge y)=s\geq 2$. Therefore $x\wedge y \prec w\prec y$ for some $w$ such that $\rho(w)=r-1$ and $v_a(w)>v_a(x\wedge y)$. By (B\ref{B3}), there is an atom $z$ such that $v_z(w) < v_z(y)$ and $t=w\vee z^{v_z(w)+1}$ is a basis. Now, by Lemma \ref{lem:valuation}, $v_z(x\wedge y)=\min\{v_z(x),v_z(y)\}$. Hence $\min\{v_z(x),v_z(y)\}\leq v_z(w)<v_z(y)$ and therefore $v_z(x\wedge y)=v_z(x)$. This gives us $v_z(x)\leq v_z(w)<v_z(t)$. So $x\neq t$ and $x\wedge y \prec x\wedge t$.
    Moreover, $v_a(y)\geq v_a(w)>v_a(x\wedge y)$. By Lemma \ref{lem:valuation}, $v_a(x\wedge y)=v_a(x)$. In addition $v_a(t)\geq v_a(w)>v_a(x\wedge y)=v_a(x)$ so that $v_a(t)>v_a(x)$.  By the induction hypothesis, there exist $u$ with $\rho(u)=\rho(x)-1$ and an atom $b$ such that $v_b(t)< v_b(x)$, $x\wedge t\preceq u$, $x=u\vee b^{v_b(u)+1}$ and $u\vee a^{v_a(u)+1}$ is a basis. We easily check that $x\wedge y\preceq x\wedge t\preceq u$. Finally, $x\wedge y\preceq x\wedge t$ implies that $v_b(x\wedge y)\leq v_b(x\wedge t)$. Since $v_b(t)< v_b(x)$, then $v_b(x\wedge y)\leq v_b(t)$. If $v_b(x)\leq v_b(y)$, then $v_b(x)=\min\{v_b(x),v_b(y)\}=v_b(x\wedge y)\leq v_b(t)$ which is a contradiction. Therefore $v_b(y)< v_b(x)$.
    
\end{proof}

\begin{theorem}
    Let $(P,\preceq)$ be a power lattice with rank function $\rho$ and let $(P,\I)$ be a matroid in $P$. Let $\B$ be the set of bases of the matroid whose elements have rank $r$. Then $\I$ is shellable where the elements of $\B$ are ordered with $\preceq_r$.
\end{theorem}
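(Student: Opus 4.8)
The plan is to verify Definition~\ref{def:shellable} directly for the $\PP$-complex $\I$. Its facets (maximal elements) are precisely the bases $\B$, these all have rank $r$ by Lemma~\ref{lem:samerank}, and every face of $\I$ lies below a basis, so $\I$ is pure of rank $r$; I order the facets as $b_1 \prec_r b_2 \prec_r \cdots \prec_r b_t$, the total order $\preceq_r$ on $P(r)$ restricted to $\B$. Fix $1\le i<j\le t$. The task is to produce $k<j$ with $b_i\wedge b_j\preceq b_k\wedge b_j$ and $\rho(b_k\wedge b_j)=r-1$.

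The engine is the dual basis exchange property, Theorem~\ref{dualbasisexchange}, invoked with the \emph{reversed} assignment $x=b_j$, $y=b_i$. Since $b_i\ne b_j$ have factorizations of the same length (Remark~\ref{rem:1}), the multiset $F(b_i)\backslash F(b_j)$ is nonempty; set $a:=\min_{\preceq_1}F(b_i)\backslash F(b_j)$, so $v_a(b_i)>v_a(b_j)$. Theorem~\ref{dualbasisexchange} then yields $u\in P$ with $\rho(u)=r-1$ and an atom $b$ such that $v_b(b_i)<v_b(b_j)$, $b_i\wedge b_j\preceq u$, $b_j=u\vee b^{v_b(u)+1}$, and such that $b_k:=u\vee a^{v_a(u)+1}$ is a basis. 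I would then check $b_k$ is the required facet. From $u\preceq b_k$ and $u\preceq b_j$ we get $u\preceq b_k\wedge b_j$, hence $b_i\wedge b_j\preceq u\preceq b_k\wedge b_j$ and $\rho(b_k\wedge b_j)\ge r-1$. Moreover $b_i\wedge b_j\preceq u\preceq b_j$ together with $v_a(b_i\wedge b_j)=\min\{v_a(b_i),v_a(b_j)\}=v_a(b_j)$ (Lemma~\ref{lem:valuation}) squeezes $v_a(u)=v_a(b_j)$, while $v_a(b_k)\ge v_a(u)+1>v_a(b_j)$; so $b_k\ne b_j$, whence $b_k\wedge b_j\prec b_j$ and therefore $\rho(b_k\wedge b_j)=r-1$ (in fact $b_k\wedge b_j=u$).

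It remains to see $b_k\prec_r b_j$, i.e. $k<j$, and this is the step I expect to need the most care. Since $\rho(u)=r-1$ and $u$ lies below both $b_j$ and $b_k$, a rank count forces $F(b_j)$ to be $F(u)$ with one extra copy of the atom whose valuation strictly increases from $u$, namely $b$, and likewise $F(b_k)$ to be $F(u)$ with one extra copy of $a$. As $v_a$ strictly decreases and $v_b$ strictly increases from $b_i$ to $b_j$, we have $a\ne b$, so $F(b_k)\backslash F(b_j)=\{a\}$ and $F(b_j)\backslash F(b_k)=\{b\}$. Finally, $b_i\prec_r b_j$ gives, via Lemma~\ref{lem:0}, $a=\min_{\preceq_1}F(b_i)\backslash F(b_j)\prec_1\min_{\preceq_1}F(b_j)\backslash F(b_i)\preceq_1 b$, the last inequality because $b\in F(b_j)\backslash F(b_i)$; hence $a\prec_1 b$, and applying Lemma~\ref{lem:0} once more yields $b_k\prec_r b_j$, so $k<j$. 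The main obstacle is exactly this bookkeeping---identifying $F(b_j)$ and $F(b_k)$ relative to $F(u)$ and threading the $\preceq_1$-comparisons through Lemma~\ref{lem:0}; once one commits to the reversed instance $x=b_j$, $y=b_i$ of the exchange property, everything lines up.
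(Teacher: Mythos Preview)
Your approach is the same as the paper's---apply Theorem~\ref{dualbasisexchange} with $x=b_j$, $y=b_i$ and the atom $a=\min_{\preceq_1}F(b_i)\backslash F(b_j)$---and your verification that $\rho(b_k\wedge b_j)=r-1$ is fine. The gap is in the final ordering step. Your claim that ``a rank count forces $F(b_j)$ to be $F(u)$ with one extra copy of the atom $b$'' (and likewise for $F(b_k)$) is false in a general power lattice: property~(ii) of Definition~\ref{def:powerlattice} only says that elements of equal rank have equal total valuation, not that the total valuation equals the rank. In the subspace lattice $\Sigma(\Fq^n)$, for instance, if $u\covered b_j$ then $F(b_j)\backslash F(u)$ consists of \emph{all} one-dimensional subspaces of $b_j$ not contained in $u$, which has $q^{r-1}$ elements, not one. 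So your identifications $F(b_k)\backslash F(b_j)=\{a\}$ and $F(b_j)\backslash F(b_k)=\{b\}$ do not hold, and the appeal to Lemma~\ref{lem:0} is not yet justified.

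The repair does not require new ideas, only a more careful valuation argument. From $b_i\prec_r b_j$ and Lemma~\ref{lem:0} one gets $v_c(b_i)=v_c(b_j)$ for every atom $c\prec_1 a$, and $v_a(b_i)>v_a(b_j)$; hence $v_c(b_i\wedge b_j)=v_c(b_j)$ for all $c\preceq_1 a$. Since $b_i\wedge b_j\preceq u\preceq b_j$, this forces $v_c(u)=v_c(b_j)$ for all $c\preceq_1 a$, and then $u\preceq b_k$ gives $v_c(b_k)\ge v_c(b_j)$ for all such $c$, with strict inequality at $c=a$. Thus $a\in F(b_k)\backslash F(b_j)$ while no $c\preceq_1 a$ lies in $F(b_j)\backslash F(b_k)$, so $\min_{\preceq_1}F(b_k)\backslash F(b_j)\preceq_1 a\prec_1\min_{\preceq_1}F(b_j)\backslash F(b_k)$, and Lemma~\ref{lem:0} now legitimately yields $b_k\prec_r b_j$. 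This is essentially what the paper argues, though its write-up is terse.
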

\begin{proof}
    Let $x$ and $y$ be two bases such that $x\prec_r y$. Let $F(x)=(x_1,\dots,x_t)$ and $F(y)=(y_1,\dots,y_t)$ be the factorization of $x$ and $y$ respectively. We also assume that these factorizations are already written in such a way that $x_i\preceq_1 x_{i+1}$ and $y_i\preceq_1 y_{i+1}$. Therefore
    \[
    \min_{\preceq_1} F(x)\setminus F(y)\prec_1 \min_{\preceq_1} F(x)\setminus F(y).
    \]
    Let $i_0$ be the smallest integer such that $x_{i_0}\neq y_{i_0}$. In this case $x_{i_0}\prec_1 y_{i_0}$ and $v_{x_{i_0}}(x)>v_{x_{i_0}}(y)$. By Theorem \ref{dualbasisexchange}, There exists $u$ with $\rho(u)=\rho(x)-1$ and $y_j$ such that $v_{y_j}(x)<v_{y_j}(y)$, $x\wedge y\preceq u$ and $y=u\vee y_j^{v_{y_j}(u)+1}$ and $z=u\vee x_{i_0}^{v_{x_{i_0}}(u)+1}$ is a basis. We have $x\wedge y\preceq u\preceq z\wedge y$. The next step is to show that $z\prec_r y$.

    For clarity, write
    \[
    F(x)=\prod_{i=1}^s a_i^{v_{a_i}(x)}, \quad a_i\prec a_{i+1}
    \]
    and 
    \[
    F(y)=\prod_{i=1}^s b_i^{v_{b_i}(y)}, \quad b_i\prec b_{i+1}
    \]
    and $i_1$ is the smallest integer such that $v_{a_i}>v_{b_i}$. Notice that $x_{i_0}=a_{i_1}$. For $z$, we have $(\bigvee_{i=1}^{i_1-1} b_i^{v_{b_i}(y)})\vee b_{i_1}^{v_{b_{i_1}}+1}\prec z$. Moreover $z$ may contain a new atom $a$ which is not equal to $b_i$, $i=1,\dots,i_1-1$. But in any case, whether $a\prec_1 b_{i_1}$ or not, we always have $z\prec_r y$. This also implies that $x\wedge y\prec z$ so that $\rho(x\wedge y)\leq \rho(u)\leq \rho(z\wedge y)<\rho(z)$. Hence $\rho(z\wedge y)=r-1$.
\end{proof}

\subsection{Weighted graphic matroids in the lattice of multiset subsets}

In this subsection, we consider the particular power lattice of multiset subsets of a finite multiset. We generalize the classical graphic to matroids on the lattice of multiset subsets from weighted graphs. 

Let $S$ be a finite multiset i.e., some elements can be repeated. For simplicity, we write $S=\prod_{i=1}^l x_i^{n_i}$ where the elements of $S$ are the $x_i$'s with the corresponding multiplicities denoted by the $n_i$'s. These elements of $S$ can be thought as variables whereas $S$ can be thought as a monomial. The lattice $P$ of subset of $S$ is the set of all multiset subsets of $S$ (which can be thought as monomials) i.e.
\[
P=\left\lbrace \prod_{i=1}^l x_i^{a_i}\colon 0\leq a_i\leq n_i\right\rbrace.
\]
The atoms of $P$ are the variables $x_i$. The maximal element is $\1=S$ and the minimal element is $\0=1$. The valuation of a monomial $\prod x_j^{a_j}$ at a variable $x_i$ is $v_{x_i}(\prod x_j^{a_j})=a_i$. The meet and join between two monomials are given by 
\[
(\prod x_i^{a_i}) \cap(\prod x_i^{b_i})=\prod x_i^{\min\{a_i,b_i\}}\text{ and } 
(\prod x_i^{a_i}) \cup(\prod x_i^{b_i})=\prod x_i^{\max\{a_i,b_i\}}.
\]
In fact the meet is just the greatest common divisors of two monomials whereas the join is the least common multiple.
The rank function $\rho$ is the same as the sum of the valuation at every atoms. It is not difficult to show that we have a power lattice with the partial ordering defined by multiset inclusion, which we denote by $\subseteq$. In the remaining part of this paper, whenever we talk about a lattice of multiset subsets, we always use the above notations.

When $P$ is the lattice of multiset subsets of a finite multiset, the $\PP$-complexes in $P$ are known as multicomplexes \cite{Stan}.

{\begin{definition}
 Let $P$ be the lattice of multiset subsets of $S = \prod_{i=1}^l x_i^{n_i}$. A multicomplex $\Delta$ in $S$ is a subset of $P$ such that
    \begin{enumerate}[(i)]
        \item $\Delta\neq \emptyset$,
        \item if $x\in \Delta$ and $y | x$, then $y\in \Delta$.
    \end{enumerate}
\end{definition}}

Here we recall the definition of shellable multicomplexes from \cite{Cim06} that is also obtained as the particular case of general shellable $\PP$-complexes as we defined in Definition \ref{def:shellable}. 

\begin{definition}\cite[Definition 3.4]{Cim06}
       A pure shellable multicomplex is a multicomplex $S$ such that its facets have the same cardinality $r$ when counted with multiplicity and there exists an ordering $F_1,F_2,\cdots,F_t$ of all the facets of $S$ such that for all $i<j\leq t$, there exists $k<j$ such that $F_i \cap F_j\subseteq F_k\cap F_j$ and $|F_k\cap F_j|=r-1$, where the cardinality is counted with multiplicity.
\end{definition}
As we have seen in a previous section, one way to get a shellable multicomplex is to build it from a matroid.
For clarity, let us recall the definition of matroids in the lattice of multiset subsets.

\begin{definition}
Let $P$ be a lattice of multiset subsets of $S=\prod_{i=1}^l x_i^{n_i}$ with rank function $\rho$. A matroid $(P,\I)$  on $P$ is defined by a subset $\I$ of $P$ such that
\begin{enumerate}[({I}1)]
    \item\label{I11} $\0\in \I$,
    \item\label{I22} if $x\subseteq y$ and $y\in \I$, then $x\in \I$,
    \item\label{I33} For all $x,y\in \I$ such that $\rho(x)<\rho(y)$, there exists $x_i$ such that $v_{x_i}(x) < v_{x_i}(y)$, $x\cup x_i^{v_{x_i}(x)+1} \in \I$.
\end{enumerate}
\end{definition}

Matroids on multisets are already known as discrete polymatroids. One can have a look at the paper \cite{HH02} and the properties (D1) and (D2) in the Introduction. Various constructions of matroids on multisets therefore already exist (see also \cite{BCF23}). In the following, we present a construction similar to graphical matroids, using weighted graphs. 

\begin{definition}[Weighted graphs]
A weighted graph $G=(V,E)$ is a set of vertices $V$ together with a subset $E\subseteq V\times V$ such that to each $e\in E$ is attached a positive integer $wt_G(e)$, called the weight of $e$.
\end{definition}

\begin{definition}
Let $G=(V,E)$ be a weighted graph. A subgraph of $G$ is a weighted graph $G_1=(V_1,E_1)$ such that $V_1\subseteq V$, $E_1\subseteq E$ and for $e\in E_1$, $0<wt_{G_1}(e)\leq wt_G(e)$. For a set of edges $E_1$ we denote by $V(E_1)$ the set of vertices which are either an origin or a tail of an edge in $E_1$.
\end{definition}

\begin{definition}[Cycles]
    Given a weighted graph $(V,E)$, a cycle of $(V,E)$ is a subgraph $G_1=(V(E_1),E_1)$ where $E_1=\{e_0,\dots,e_n\}\subset E$ such that $wt_{G_1}(e_i)=wt_G(e_i)$ and the tail of $e_{i -1 \mod n}$ is the origin of $e_{i \mod n}$ for any $i$.
\end{definition}

In other words, a cycle is similar to the classical notion of cycle on simple graph except that the weight of every edge in the cycle must be maximal, i.e. the same as its weight in the original graph.

\begin{definition}[Lattice of multiset subsets associated to a weighted graph]
    Let $G=(V,E)$ be a weighted graph. Let $S=\prod_{e\in E} x_e^{wt_G(e)}$ be a multiset on the edges in $E$.
    Define $P$ be the lattice of multiset subsets of $S$. For a multiset subset $H$ in $P$, the subgraph induced by $H$ is the weighted graph $(V(H),E(H))$, where $E(H)$ is the set of elements of $H$, the weight of an edge is its multiplicity in $H$ and $V(H):=V(E(H))$. 
\end{definition}

\begin{definition}
    Let $G=(V,E)$ be a weighted graph. The independent complex $\I_G$ associated to $G$ is a set of multiset subsets $H$ of edges where $H\in \I_G$ if $(V(H),E(H))$ doesn't contain a subgraph which is a cycle of $G$. In addition, the empty set is added to $\I_G$.
\end{definition}

Let $P(G)$ be the power lattice of multiset subsets of the multiset $S$ associated to a graph $G=(V,E)$. It is clear that the independent complex $\I_G$ is a $P(G)$-complex i.e. it is not empty and it is closed under inclusion which we denote by $\preceq$. Now we show that this form a matroid $(P(G),\I_G)$ in the lattice of multiset subsets $P(G)$.

\begin{theorem}
    Let $G=(V,E)$ be a weighted graph. Let $\I_G$ be the independent complex associated to $G$ in the power lattice $P(G)$ associated to $G$. Then, $(P(G),\I_G)$ is a matroid and we call it the graphic matroid associated to a weighted graph $G=(V,E)$.
\end{theorem}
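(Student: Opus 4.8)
The plan is to recast membership in $\I_G$ as a forest condition and then verify the three matroid axioms. For $z\in P(G)$ let $F_z:=\{e\in E\colon v_{x_e}(z)=wt_G(e)\}$ be the set of edges occurring in $z$ with full multiplicity; I would first establish
\[
z\in\I_G\iff (V,F_z)\text{ is a forest, i.e.\ contains no cycle of }G.
\]
This is a direct unwinding of the definitions: every edge of a cycle of $G$ carries its maximal weight, so a cycle of $G$ contained in the subgraph induced by $z$ can involve only edges of $F_z$, and conversely every cycle in $(V,F_z)$ is a cycle of $G$ contained in $z$. Granting this, (I\ref{I11}) is immediate since $\0=1$ (the empty multiset) is placed in $\I_G$ by definition, and (I\ref{I22}) follows because $x\subseteq y$ forces $F_x\subseteq F_y$ (an edge at full multiplicity in $x$ is at full multiplicity in $y$), so $(V,F_x)$ is a subgraph of the forest $(V,F_y)$ and hence itself a forest.

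The substance is the exchange axiom (I\ref{I33}). I would fix $x,y\in\I_G$ with $\rho(x)<\rho(y)$ and argue by contradiction, assuming $x\cup x_e^{\,v_{x_e}(x)+1}\notin\I_G$ for every edge $e$ with $v_{x_e}(x)<v_{x_e}(y)$. Adding one copy of $e$ to $x$ leaves $F_x$ unchanged unless $v_{x_e}(x)+1=wt_G(e)$, in which case the saturated set becomes $F_x\cup\{e\}$; so the assumption forces, for every such $e$: (a) $v_{x_e}(x)=wt_G(e)-1$ and $v_{x_e}(y)=wt_G(e)$, hence $e\in F_y\setminus F_x$; and (b) $(V,F_x\cup\{e\})$ contains a cycle, i.e.\ the two endpoints of $e$ already lie in a single connected component of the forest $(V,F_x)$. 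Set $D:=\{e\colon v_{x_e}(x)<v_{x_e}(y)\}$. Then (a) gives $D=F_y\setminus F_x$, and (b) gives that $(V,F_x\cup D)$ has the same connected components as $(V,F_x)$. Since $F_y\subseteq F_x\cup D$, the forest $(V,F_y)$ has at least as many connected components as $(V,F_x)$, so $|F_y|\le|F_x|$ (a forest on $V$ has $|V|$ minus its number of components many edges), whence $|D|=|F_y\setminus F_x|\le|F_x\setminus F_y|$.

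The remaining numerical bookkeeping is the step I expect to be the crux. I would write $\rho(y)-\rho(x)=\sum_{e\in E}\bigl(v_{x_e}(y)-v_{x_e}(x)\bigr)$ and estimate over three disjoint classes of edges: each $e\in D$ contributes exactly $+1$; each $e\in F_x\setminus F_y$ has $v_{x_e}(x)=wt_G(e)>v_{x_e}(y)$ and so contributes $\le-1$; and every remaining edge, being outside $D$, contributes $\le 0$. Hence $\rho(y)-\rho(x)\le|D|-|F_x\setminus F_y|\le 0$, contradicting $\rho(x)<\rho(y)$. Therefore some $e$ with $v_{x_e}(x)<v_{x_e}(y)$ satisfies $x\cup x_e^{\,v_{x_e}(x)+1}\in\I_G$, which is precisely (I\ref{I33}), so $(P(G),\I_G)$ is a matroid. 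The delicate point is that the naive bound $\rho(y)-\rho(x)\le|D|$ is too weak on its own: one must also exploit that the edges saturated in $x$ but not in $y$ contribute strictly negatively, together with the component inequality $|F_y|\le|F_x|$ obtained from (b).
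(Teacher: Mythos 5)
Your proof is correct, and it rests on the same fundamental idea as the paper's: recast membership in $\I_G$ as the condition that the set $F_z$ of edges appearing with full weight be a forest, then compare the saturated edge sets $F_x$ and $F_y$. But you organize the exchange axiom by contradiction, whereas the paper argues constructively in two cases. The paper first disposes of the case where some edge in $A=\{e:v_e(x)<v_e(y)\}$ still has slack ($v_e(x)<wt_G(e)-1$) by adding one copy of it directly; in the remaining case it chases cardinalities through several auxiliary sets ($A,B,C,D,E_x,E_y$) to conclude $|E_x|<|E_y|$, then invokes pigeonhole on component counts to locate an edge of $G_y$ joining two components of $G_x$, which it then adds. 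You instead suppose no augmentation works, which forces every candidate edge $e$ to be exactly one below saturation in $x$, saturated in $y$, and to have both endpoints already joined in $(V,F_x)$; from this you get $D=F_y\setminus F_x$, the component count of $(V,F_y)$ is at least that of $(V,F_x)$, hence $|D|\le|F_x\setminus F_y|$, and then the clean estimate $\rho(y)-\rho(x)\le|D|-|F_x\setminus F_y|\le 0$ yields the contradiction directly. Your version is more compact and avoids the auxiliary partition $B\cup C$ and the explicit pigeonhole path argument, at the cost of being non-constructive: the paper exhibits a specific edge $a$ for which $x\vee a^{v_a(x)+1}\in\I_G$, while your argument only shows one must exist. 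Both are valid; yours trades explicitness for brevity.
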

\begin{proof}
    What remains to show is the property (I\ref{I3}) which we recall:

    For all $x,y\in \I_G$ such that $\rho(x)<\rho(y)$, there exists an atom $a\in P$ such that $v_a(x) < v_a(y)$, $x\cup a^{v_a(x)+1} \in \I_G$.

    For the proof, let $x,y\in \I_G$ such that $\rho(x)<\rho(y)$, then there exist an edge $e$ such that $v_e(x)<v_e(y)$. Let $A=\{e\in E\colon v_e(x)<v_e(y)\}$. Therefore $A$ is non-empty. Let $A^C=E\setminus A$ and therefore, for $e\in A^C$, $v_e(x)\geq v_e(y)$.
    We distinguish two cases:
    
    \textbf{Case 1.} Suppose there exists $e\in A$, such that $v_e(x)<w_G(e)-1$. Then $z=x\vee e^{v_e(x)+1} \in \I$, since the only change done to $x$ to obtain $z$ is to increase the weight of one edge but the weight is still small and can't be part of a cycle.
    
    \textbf{Case 2.} Suppose that for all $e\in A$, we have $wt_G(e)-1\leq v_e(x)$. Then $wt_G(e)-1\leq v_e(x)<v_e(y)\leq wt_G(e)$. Hence, for all $e\in A$, $wt_G(e)=v_e(y)=v_e(x)+1$.
    Since $\rho(x)<\rho(y)$, $\sum_{e\in E} v_e(x)<\sum_{e\in E} v_e(y)$ and therefore
    \begin{equation}\label{eq:ex}
    \sum_{e\in A^C} v_e(x)<\sum_{e\in A^C} v_e(y)+|A|.
    \end{equation} 
    Let
    \[
    B=\{e\in A^C\colon v_e(x)=v_e(y)=wt_G(e)\}\]
    and 
    \[
    C=\{e\in A^C\colon v_e(x)>v_e(y)\text{ or } v_e(x)<wt_G(e)\}.
    \]
    Hence $B\cup C=A^C$.
    Equation \eqref{eq:ex} implies
    \[
        \sum_{e\in B} v_e(x)+\sum_{e\in C} v_e(x)<\sum_{e\in B} v_e(y)+\sum_{e\in C} v_e(y)+|A|.
    \]
    By definition of $B$, the previous inequality implies
    \[
        \sum_{e\in C} v_e(x)-\sum_{e\in C} v_e(y)<|A|.
    \]
    Let $D=\{e\in A^C\colon v_e(x)>v_e(y)\}$. We have $D\subseteq C$. Then
    \[
        \sum_{e\in D} v_e(x)-\sum_{e\in D} v_e(y)\leq \sum_{e\in C} v_e(x)-\sum_{e\in C} v_e(y)<|A|.
    \]
    Hence
    \begin{equation}\label{eq:DBAB}
    |D|<|A| \quad \text{i.e. } |D|+|B|<|A|+|B|
    \end{equation}
    Notice that $B\cap D=B\cap A=\emptyset$. Consider the simple graph $G_x=(V,E_x)$ where $e\in E_x$ if $v_e(x)=wt_G(e)$. Similarly, define $G_y=(V,E_y)$ where $e\in E_y$ if $v_e(y)=wt_G(e)$. 
    One can check that $E_x\subseteq B\cup D$ and $A\cup B\subseteq E_y$. These together with Equation \eqref{eq:DBAB} implies $|E_x|<|E_y|$.
    The number of connected components in $G_x$ is $N_x=|V|-|E_x|$ and the number of connected components in $G_y$ is $N_y=|V|-|E_y|$ and we have $N_y<N_x$. By the pigeonhole principle, there is a component $z$ of $G_y$ which contains vertices from two or more components of $G_x$. Take a path $z$ in $G_y$ which connects two components of $G_x$. This path contains an edge $a$ (i.e. $a\preceq z$ which connect two components of $G_x$. Therefore adding this edge to $G_x$ cannot produce a cycle when adding $a$ to the graph $G_x$. More importantly, $x\vee a^{v_a(x)+1}$ does not contain a cycle and hence it is independent.
  
\end{proof}
\section{Stanley-Reisner rings associated to multicomplexes}\label{sec:stanleyreisnerrings}




The face rings or Stanley-Reisner rings associated to shellable simplicial complexes are Cohen-Macaulay rings \cite{Stan}. 
Thus for a shellable $\PP$-complex in a power lattice, the Stanley-Reisner ring associated with its order complex is Cohen-Macaulay.
In this section, we consider a quotient of polynomial ring associated to a multicomplex and show that the ring is sequentially Cohen-Macaulay if the multicomplex is $\PP$-shellable. 
We also explain why the ring we consider corresponding to a multicomplex is a generalization of Stanley-Reisner ring of a simplicial complex. To do this, we adapt the construction of sheaf of rings by Yuzvinsky in \cite{Yuz87} for a multicomplex.




Let $P$ be the lattice of multiset subsets of $\1=\prod_{i=1}^l x_i^{n_i}$ and suppose that $\Delta$ is a multicomplex in $P$. In this section, we also assume that $\1\notin \Delta$. To this multiset, we associate the ring $R=\FF[x_1,\dots,x_l]$, where $\FF$ is any arbitrary field.

Let $X=X(\Delta)$ be the set of all meets of facets of $\Delta$. Consider $X$ as a poset where for $A,B
\in X$, $A\preceq B$ if $B\subseteq A$. The minimal elements of the poset $X$ are therefore the elements of $B$.

Now, we construct a sheaf $\A$ of rings on $X$. For each $\sigma\in X$, let the stalk 
\[
(\A)_\sigma=\FF[x_1,\dots,x_l]/(x_1^{v_{x_1}(\sigma)+1},\dots, x_l^{v_{x_l}(\sigma)+1})
\]
be the quotient ring modulo the ideal $(x_1^{v_{x_1}(\sigma)+1},\dots, x_l^{v_{x_l}(\sigma)+1})$.

Let $\prod_{\sigma\in X} (\A)_\sigma$ be the Cartesian product of the stalks associated to the elements of $X$. For $\sigma\preceq \tau$ in $X$ (i.e., $\tau\subseteq \sigma$), we define the ring homomorphism $\rho_{\tau\sigma}:(\A)_\sigma \rightarrow (\A)_\tau$ as the natural 
projection.

One can verify that these define a sheaf of rings $\A$. 
\begin{definition}
Let $\Delta$ be a multicomplex and let $X=X(\Delta)$ be the set of all meets of facets of $\Delta$. The ring of sections $\Gamma(\A)$ on $X$ is defined by
\begin{equation}\label{eq:sheaf}
\Gamma(\A)=\{ (f_\sigma)_{\sigma\in X} \in \prod_{\sigma\in X} (\A)_\sigma\colon\rho_{\tau\sigma}(f_\sigma)=f_\tau,\sigma\preceq \tau, \sigma,\tau\in X \}.
\end{equation}
\end{definition}

The next theorem shows that the ring of sections on $X$ is, in fact, isomorphic to a quotient of $\FF[x_1, \dots, x_l]$.

\begin{theorem}
Let $P$ be the lattice of multiset subsets of $\hat{1}=\prod_{i=1}^l x_i^{n_i}$ and let $\Delta$ be a multicomplex in $P$. Suppose $X=X(\Delta)$ is the set of all meets of facets of $\Delta$ and $I_\Delta$ is the ideal of $\FF[x_1, \dots, x_l]$ generated by the monomials corresponding to the elements in $P\setminus \Delta$. Then the ring of sections on $X$ is isomorphic to the quotient of the polynomial ring by $I_{\Delta}$, i.e.,
   \[
   \Gamma(\A)\simeq \FF[x_1,\dots,x_l]/I_\Delta.
   \]
\end{theorem}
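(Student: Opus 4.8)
The plan is to exhibit an explicit surjective ring homomorphism
$\varphi\colon \FF[x_1,\dots,x_l]\to \Gamma(\A)$ and show that its kernel is exactly $I_\Delta$. For a polynomial $f\in\FF[x_1,\dots,x_l]$, define $\varphi(f)=(f+J_\sigma)_{\sigma\in X}$, where $J_\sigma=(x_1^{v_{x_1}(\sigma)+1},\dots,x_l^{v_{x_l}(\sigma)+1})$ is the ideal defining the stalk $(\A)_\sigma$. First I would check that $\varphi(f)$ actually lands in $\Gamma(\A)$: if $\sigma\preceq\tau$ in $X$ (that is, $\tau\subseteq\sigma$ as multisets), then $v_{x_i}(\tau)\le v_{x_i}(\sigma)$ by Lemma~\ref{lem:iniqualityvaluation}, hence $J_\sigma\subseteq J_\tau$ and the natural projection $\rho_{\tau\sigma}$ sends $f+J_\sigma$ to $f+J_\tau$; so the compatibility condition in \eqref{eq:sheaf} holds automatically. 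That $\varphi$ is a ring homomorphism is immediate since it is built from quotient maps, and surjectivity I would get from the fact that the sheaf is ``flasque enough'': any compatible section can be lifted — concretely, one shows that the map from $\FF[x_1,\dots,x_l]$ to $\prod_{\sigma\in X}(\A)_\sigma$ restricted to the subring $\Gamma(\A)$ is onto, using that the stalks are themselves quotients of the polynomial ring and that $X$ has a well-behaved poset structure (the minimal elements being the facets/bases of $\Delta$, whose valuations dominate all others).

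The crux is the computation of $\ker\varphi$. By definition $f\in\ker\varphi$ iff $f\in J_\sigma$ for every $\sigma\in X$, i.e. $f\in\bigcap_{\sigma\in X}J_\sigma$. Each $J_\sigma$ is a monomial ideal, so the intersection is again a monomial ideal, and a monomial $m=\prod x_i^{a_i}$ lies in $J_\sigma$ iff $a_i\ge v_{x_i}(\sigma)+1$ for some $i$, i.e. iff $m\not\subseteq\sigma$ as an element of the ambient multiset lattice (equivalently $\sigma\mid m$ fails, thinking of monomials). Thus $m\in\bigcap_{\sigma\in X}J_\sigma$ iff $m\not\subseteq\sigma$ for every $\sigma\in X$. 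I would then argue this last condition is equivalent to $m\notin\Delta$ (viewing $m$ as an element of $P$, possibly after truncating exponents at $n_i$; monomials with some $a_i>n_i$ are handled separately and clearly lie in $I_\Delta$). Indeed, if $m\in\Delta$, then $m\preceq f_j$ for some facet $f_j$, and since every facet is itself in $X$ (as the meet of the single facet with itself), $m\subseteq f_j\in X$ shows $m\notin\ker\varphi$. Conversely, if $m\notin\Delta$, I claim $m\not\subseteq\sigma$ for all $\sigma\in X$: every $\sigma\in X$ is a meet of facets, hence $\sigma\preceq$ (is contained in) some facet, hence $\sigma\in\Delta$ by the downward-closedness of $\Delta$; if $m\subseteq\sigma$ then $m\in\Delta$ by downward-closedness again, a contradiction. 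Hence $\ker\varphi$ is the monomial ideal generated by $\{\,m : m\notin\Delta\,\}$, which is precisely $I_\Delta$.

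The main obstacle I anticipate is surjectivity of $\varphi$, not the kernel computation. One must show every compatible family $(f_\sigma)_\sigma$ comes from a single polynomial; the natural strategy is a downward induction on the poset $X$ (starting from maximal elements, which correspond to deep meets with small valuations, and patching along cover relations), at each step using that the stalk maps $\rho_{\tau\sigma}$ are surjective with kernel a monomial ideal, so that a chosen lift at $\sigma$ can be corrected by an element of $\ker\rho_{\tau\sigma}$ to agree on all of $\tau$. Making the patching consistent across possibly many incomparable elements above a given $\sigma$ is where care is needed; here the fact that $\A$ is a sheaf of rings on $X$ (so the gluing axiom holds and $\Gamma(\A)$ really is the inverse limit over $X$) together with the combinatorial structure of $X=X(\Delta)$ — in particular that $X$ is a meet-subsemilattice of $P$ with the facets as its minimal elements — is what I would lean on. Once surjectivity is in hand, the first isomorphism theorem gives $\Gamma(\A)\cong\FF[x_1,\dots,x_l]/I_\Delta$, as claimed.
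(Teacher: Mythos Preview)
Your overall strategy coincides with the paper's: define the diagonal map $\varphi$ into $\Gamma(\A)$, identify $\ker\varphi$ with $\bigcap_{\sigma\in X}J_\sigma$, and then argue this intersection equals $I_\Delta$. Your kernel argument is in fact more direct than the paper's --- you use only that every facet lies in $X$ and every element of $X$ is a face, whereas the paper routes through a primary-decomposition lemma for monomial ideals. On surjectivity you worry more than the paper does (the paper dispatches it in one line); your patching sketch is fine, and becomes very short once you use the meet-closedness of $X$: for a compatible family $(f_\sigma)$ and a monomial $m\in\Delta$, the coefficient of $m$ in $f_\sigma$ is the same for every $\sigma\in X$ containing $m$ (compare via $f_{\sigma\wedge\sigma'}$), so setting $f=\sum_{m\in\Delta}c_m\,m$ gives a preimage.

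There is, however, a genuine gap at exactly the place you dismissed as routine. You assert that monomials $\prod x_i^{a_i}$ with some $a_i>n_i$ ``are handled separately and clearly lie in $I_\Delta$.'' This is false. Take $l=2$, $\hat 1=x_1^2x_2^2$, and $\Delta=\langle x_1^2x_2,\;x_1x_2^2\rangle$; then $P\setminus\Delta=\{x_1^2x_2^2\}$, so $I_\Delta=(x_1^2x_2^2)$, yet $x_1^3\in J_\sigma$ for every $\sigma\in X$ while $x_1^3\notin(x_1^2x_2^2)$. Thus $\ker\varphi\supsetneq I_\Delta$; indeed $R/I_\Delta$ is infinite-dimensional over $\FF$ here, whereas $\Gamma(\A)$ sits inside a finite product of Artinian rings and is finite-dimensional, so no isomorphism is possible. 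The paper's proof glosses over the same point (its decomposition of $I_\Delta$ into ideals $(x_1^{b_1},\dots,x_l^{b_l})$ tacitly assumes every component involves all variables with finite exponent). The statement becomes correct, and your argument goes through verbatim, if one either enlarges $I_\Delta$ by the generators $x_i^{n_i+1}$ or works in the ambient ring $\FF[x_1,\dots,x_l]/(x_1^{n_1+1},\dots,x_l^{n_l+1})$; but as written, this step fails.
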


\begin{proof}
    Consider the homomorphism \begin{align*}
\phi:\FF[x_1,\dots,x_l]&\longrightarrow \Gamma(\A)\\
f&\longmapsto (f+ (x_1^{v_{x_1}(\sigma)+1},\dots ,x_l^{v_{x_l}(\sigma)+1}))_{\sigma\in X}.
\end{align*}
The above map is a well-defined ring homomorphism. We first show that $\ker \phi=I_\Delta$.



    Let $f \in I_\Delta$ be a generator monomial. If $\phi(f) \neq 0$, then there exists $\sigma \in X$ such that $f \notin (x_1^{v_{x_1}(\sigma)+1}, \ldots, x_l^{v_{x_l}(\sigma)+1})$. This implies that $v_{x_i}(f) \leq v_{x_i}(\sigma)$ for all $i \in \{ 1, \ldots, l\}$, which contradicts that $f$ is a monomial corresponding to a nonface, i.e., elements in $P\backslash \Delta$. Thus $f \in \ker \phi$ and therefore, $I_\Delta \subseteq \ker \phi$. 

    Conversely, let $f \in \ker \phi$, i.e., $f \in (x_1^{v_{x_1}(\sigma)+1},\dots ,x_l^{v_{x_l}(\sigma)+1})$ for all $\sigma \in X$. For for any $\sigma \in X$, we associate the ideal $P_{\sigma} = (x_1^{v_{x_1}(\sigma)+1},\dots ,x_l^{v_{x_l}(\sigma)+1})$. Thus $f \in \ker \phi$ implies that $f \in \bigcap\limits_{\sigma \in X} P_{\sigma}$. We claim that $I_\Delta = \bigcap\limits_{\sigma \in X} P_{\sigma}$. That would imply that $f\in I_\Delta$ and therefore $\ker \phi \subseteq I_\Delta$. Thus $\ker \phi= I_\Delta$. Finally, we note that the map $\phi$ is a surjective map follows from Equation \eqref{eq:sheaf} since $\emptyset \in \Delta$ and $(\A)_\emptyset = \mathbb{F}[x_1, \ldots, x_l]$. This would complete the proof of the Theorem.

    \textbf{Proof of the claim:} It is well known that if $J$ is a monomial ideal of $\mathbb{F}[x_1, \ldots, x_l]$, then $(ab, J) = (a, J) \bigcap (b ,J)$ where $a,b$ are relatively prime monomials. 
    Thus the monomial ideal $I_{\Delta}$ can be written as intersection of ideals of the form $(x_1^{b_1}, \ldots, x_l^{b_l})$ i.e.,
    \[
    I_{\Delta} = \bigcap_{b_1,\dots,b_l} (x_1^{b_1}, \ldots, x_l^{b_l}).
    \]
    Note that if $I_\Delta \subseteq ( x_1^{b_1}, \ldots, x_l^{b_l})$, then the multiset $\prod_{i=1}^l x_i^{b_i-1}$ is a face of $\Delta$. 
    Indeed, otherwise, if it is a nonface, the corresponding monomial $\prod_{i=1}^l x_i^{b_i-1} \in I_\Delta \subseteq ( x_1^{b_1}  \ldots, x_l^{b_l})$, which is not possible, considering the degrees. Thus the ideal $I_{\Delta}$ is of the intersection of ideals of the form $P_{\sigma}$ for \emph{some} faces $\sigma \in \Delta$ i.e.,
    \begin{equation}\label{eq:IDPD}
    I_\Delta= \bigcap_{\sigma\in \Gamma} P_\sigma,
    \end{equation}
    where $\Gamma\subseteq \Delta$.
    On the other hand, we claim that for any facet $\sigma$ of $\Delta$, $I_\Delta \subseteq P_{\sigma}$. Let $f = \prod_{i=1}^l x_i^{b_i}$ be any generator of $I_\Delta$ corresponding to a nonface. That means for any facet $\sigma$, $b_i > v_{x_i}(\sigma)$ for some $i$. This implies $\prod_{i=1}^l x_i^{b_i} \in P_{\sigma}$ which proves the claim. Thus we have \begin{equation}\label{eq:faceDelta}
        I_\Delta\subseteq \bigcap\limits_{\sigma\text{ facet of }\Delta}P_\sigma.
    \end{equation}
    Moreover, if $\tau \subset \sigma$, then $P_{\sigma} \subset P_{\tau}$. Since any facet $\sigma$ of  $\Delta$ is an element of $X$, we have the following equalities 
    \begin{equation}\label{3equalities}
    \bigcap_{\sigma\text{ facet of }\Delta}P_\sigma= \bigcap_{\sigma\in X}P_\sigma=\bigcap_{\sigma\in \Delta}P_\sigma 
    \end{equation}
    Now, since $\bigcap\limits_{\sigma\in \Delta}P_\sigma\subseteq \bigcap\limits_{\sigma\in \Gamma}P_\sigma$, combining equations \eqref{eq:IDPD}, \eqref{eq:faceDelta} and \eqref{3equalities}, we get 
 $I_{\Delta} \subseteq \bigcap\limits_{\sigma\in X}P_\sigma = \bigcap\limits_{\sigma\in \Delta}P_\sigma \subseteq \bigcap\limits_{\sigma\in \Gamma} P_\sigma = I_{\Delta}$ and that implies $I_\Delta = \bigcap\limits_{\sigma \in X} P_{\sigma}$. This completes the proof of the claim. 

\end{proof}

In the last theorem, if we consider $\Delta$ to be a simplicial complex, then the ring of sections becomes isomorphic to the Stanley-Reisner ring associated with $\Delta$. This leads to the next definition.

\begin{definition}
Let $P$ be the lattice of multiset subsets of $\hat{1}=\prod\limits_{i=1}^l x_i^{n_i}$ and let $\Delta$ be a multicomplex in $P$. The Stanley-Reisner ideal associated to $\Delta$ is the ideal $I_\Delta$ generated by the monomials corresponding to the elements in $P\setminus \Delta$. The quotient ring $\FF[x_1,\dots,x_n]/I_\Delta$ is called the Stanley-Reisner ring associated with $\Delta$.
\end{definition}
\begin{remark}
  Notice that our ideal $I_\Delta$ is the monomial ideal generated by the monomials in $P\setminus \Delta$. On the other hand, the ideal considered in \cite{HP06,Jah07,Mur11} is the one generated by \emph{any} monomials which are not in $\Delta$, regardless of whether they are in $P$ or not. The difference is demonstrated by an example in \cite[Remark 1.5]{Cim06}.
  \end{remark}
Our goal is to show that the shellability of $\Delta$ implies that $\FF[x_1,\dots,x_l]/I_\Delta$ is sequentially Cohen-Macaulay. To do this, we recall some notions in commutative algebra. For detail, one can refer to, e.g., the `green book' by Stanley \cite{Stan}.

\begin{definition}\label{defn:polarisation}
    Let $R=\FF[x_1,\cdots,x_l]$ be a polynomial ring. Suppose $M_k=x_1^{a_{k,1}}\dots x_l^{a_{k,l}}$ is a monomial in $R$. We define the polarization of $M_k$ to be the square-free monomial
    \[
        pol(M_k)=x_{1,1}x_{1,2}\dots x_{1,a_{k,1}}x_{2,1}x_{2,2}\dots x_{2,a_{k,2}}\dots x_{l,1}x_{l,2}\dots x_{l,a_{k,l}}
    \]
    in the polynomial ring $S=\FF[x_{i,j}\colon 1\leq i\leq l,1\leq j\leq a_{k,i}]$.
    
If $I=(M_1,\dots,M_t)$ is an ideal of $R$, then the polarization of $I$ is the ideal,
\[
pol(I)=(pol(M_1),\dots,pol(M_t)),
\]
in the polynomial ring $\FF[x_{i,j}:1\leq i\leq l,1\leq j\leq \max_k a_{k,i}]$.
\end{definition}

\begin{definition}
Let $R$ be a commutative Noetherian local ring and let $M$ be an $R$-module. 
An element $a\in M$ is said to be $M$-regular if $ax\neq 0$ for all $0\neq x\in M$. A sequence $(a_1,\dots,a_r)$ of elements of $M$ is an $M$-sequence if
\begin{enumerate}[(a)]
    \item $a_1$ is $M$-regular and $a_{i+1}$ is $M/a_i M$-regular for every $i\geq 1$, and
    \item $M/(\sum_i a_i M)\neq 0$.
\end{enumerate}
\end{definition}

\begin{definition}
Let $R$ be a commutative Noetherian local ring and let $M$ be a finitely generated $R$-module. Every maximal $M$-sequences have the same length $\depth(M)$, which is called the depth of $M$.
\end{definition}

\begin{definition}
Let $R$ be a commutative Noetherian local ring and let $M$ be a finitely generated $R$-module. The Krull dimension $\dim M$ of the module $M$ is the Krull dimension of the ring $R/(Ann_R(M))$, where 
$$Ann_R(M)=\{x\in R\colon xy=0 \text{ for all } y\in M\},$$ the annihilator of $M$. 
\end{definition}

\begin{definition}
Let $R$ be a commutative Noetherian local ring and let $M$ be a finitely generated $R$-module. We say that $M$ is a Cohen-Macaulay module if $\depth (M) =\dim M$.
\end{definition}

\begin{definition}
Let $R$ be a commutative Noetherian local ring and let $M$ be a finitely generated (graded) $R$-module. A finite filtration $0=M_0\subset M_1\subset \dots \subset M_l=M$ of $M$ by submodules of $M$ is called a $CM$-filtration if every $M_i/M_{i-1}$ is Cohen-Macaulay and
\[
\dim (M_1/M_0)< \dim (M_2/M_1) < \dots < \dim (M_l/M_{l-1}).
\]
$M$ is said to be sequentially Cohen-Macaulay if $M$ admits a $CM$-filtration.
\end{definition}

For the rest of this paper, we fix an arbitrary multicomplex $\Delta$ in a lattice $P$ of multiset subsets of $\1=\prod\limits_{i=1}^l x_i^{n_i}$ and let $I_\Delta$ be the monomial ideal generated by all the monomials which are in $P\setminus \Delta$ i.e., $I_\Delta=(M_1,\dots,M_{|P\setminus \Delta|})$ where $M_i$'s are the elements of $P\setminus \Delta$. To prove the sequential Cohen-Macaulayness of $R/I_\Delta$ when $\Delta$ is shellable, we will use the following relation between an ideal and its polarization.

\begin{proposition}[{\cite[Proposition 4.11]{Sar05}}]\label{pro:Froberg}
Let $R$ and $S$ be defined as in Definition \ref{defn:polarisation}. Then $R/I_\Delta$ is sequentially Cohen-Macaulay if and only if $S/pol(I_\Delta)$ is sequentially Cohen-Macaulay.
\end{proposition}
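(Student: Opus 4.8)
The plan is to prove this by the classical observation that polarization realizes $R/I_\Delta$ as a deformation of $S/pol(I_\Delta)$ by a regular sequence of linear forms, together with the fact that the sequentially Cohen-Macaulay property is insensitive to quotienting by an element that is regular on the module.

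\textbf{Step 1: polarization as a regular-sequence quotient.} Put $m_i := \max_k a_{k,i}$, so that $S=\FF[x_{i,j}:1\le i\le l,\ 1\le j\le m_i]$, and consider the linear forms $\theta_{i,j}:=x_{i,j}-x_{i,1}$ for $1\le i\le l$, $2\le j\le m_i$; list them as $\theta_1,\dots,\theta_c$ with $c=\sum_i(m_i-1)$. The surjection $S\twoheadrightarrow R$, $x_{i,j}\mapsto x_i$, sends $pol(M_k)=\prod_i\prod_{j=1}^{a_{k,i}}x_{i,j}$ to $M_k$, hence identifies $S/\bigl(pol(I_\Delta)+(\theta_1,\dots,\theta_c)\bigr)$ with $R/I_\Delta$. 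The key structural input I would invoke — and, if needed, reprove by the standard linear-quotients argument for polarized monomial ideals — is that $\theta_1,\dots,\theta_c$ is moreover a regular sequence on $S/pol(I_\Delta)$. This part is routine.

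\textbf{Step 2: sequential Cohen-Macaulayness passes through a regular linear form.} The crux is the following lemma: if $A$ is a standard graded polynomial ring over $\FF$, $N$ a finitely generated graded $A$-module, and $\ell\in A_1$ an $N$-regular element, then $N$ is sequentially Cohen-Macaulay if and only if $N/\ell N$ is. To prove it I would use the homological characterization of sequential Cohen-Macaulayness (due to Schenzel): with $d=\dim A$, a module $N$ is sequentially Cohen-Macaulay exactly when each deficiency module $\mathrm{Ext}^{d-i}_A(N,A)$ is either zero or Cohen-Macaulay of dimension $i$. Applying $\mathrm{Ext}^\bullet_A(-,A)$ to $0\to N\xrightarrow{\ \ell\ }N\to N/\ell N\to 0$, together with the change-of-rings isomorphism $\mathrm{Ext}^{j}_{A/\ell A}(N/\ell N,A/\ell A)\cong\mathrm{Ext}^{j+1}_A(N/\ell N,A)$, yields short exact sequences, up to a degree shift,
\[
0\longrightarrow \mathrm{Ext}^{d-i-1}_A(N,A)/\ell\longrightarrow \mathrm{Ext}^{(d-1)-i}_{A/\ell A}(N/\ell N, A/\ell A)\longrightarrow \bigl(0:_{\mathrm{Ext}^{d-i}_A(N,A)}\ell\bigr)\longrightarrow 0.
\]
When $N$ is sequentially Cohen-Macaulay, each $\mathrm{Ext}^{d-j}_A(N,A)$ is $0$ or Cohen-Macaulay of dimension $j$, and one checks that the $N$-regular element $\ell$ avoids the associated primes of these modules, so it is regular on each of them; then the submodule term vanishes and the outer terms are $0$ or Cohen-Macaulay of dimension $i$, giving the same for the middle term, i.e. $N/\ell N$ is sequentially Cohen-Macaulay. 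Reading the same sequences backwards, with an induction on $i$, recovers the Cohen-Macaulayness and dimension of each $\mathrm{Ext}^{d-i}_A(N,A)$ from that of $N/\ell N$, which gives the converse.

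\textbf{Step 3: conclusion, and the main obstacle.} Applying the lemma of Step 2 successively with $A=S$, then $A=S/(\theta_1)$, and so on — at each stage $\theta_{k+1}$ is regular on the current quotient because $\theta_1,\dots,\theta_c$ is a regular sequence on $S/pol(I_\Delta)$ — one concludes that $S/pol(I_\Delta)$ is sequentially Cohen-Macaulay if and only if $S/\bigl(pol(I_\Delta)+(\theta_1,\dots,\theta_c)\bigr)\cong R/I_\Delta$ is, which is the assertion. The genuine difficulty is entirely inside Step 2, and specifically the ``only if'' direction: one must know that an element regular on a sequentially Cohen-Macaulay module $N$ is automatically regular on every Cohen-Macaulay layer of its dimension filtration (equivalently, on every nonzero deficiency module $\mathrm{Ext}^{d-i}_A(N,A)$). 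This is the precise point where sequential Cohen-Macaulayness of $N$ is used rather than arbitrariness, and it is cleanest to phrase it through the fact that $\mathrm{Ass}(N)$ is the union of the associated primes of the (Cohen-Macaulay, hence unmixed) layers.
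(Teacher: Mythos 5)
The paper does not prove this proposition at all; it is quoted as a known result from Faridi's paper [Sar05, Prop.\ 4.11], so there is no ``paper's own proof'' to compare against. That said, your plan --- realize $R/I_\Delta$ as $S/pol(I_\Delta)$ modulo a regular sequence of linear forms $\theta_{i,j}=x_{i,j}-x_{i,1}$, and then transfer sequential Cohen--Macaulayness across each quotient --- is the natural one, and Step~1 is indeed standard. Two substantive issues remain.

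First, you misidentify which direction of the Step~2 lemma is hard. The ``only if'' direction ($N$ sCM $\Rightarrow N/\ell N$ sCM), which is the one the paper actually uses downstream (from $S/pol(I_\Delta)$ sCM to $R/I_\Delta$ sCM), is the easy one: since $\ell$ is $N$-regular it avoids $\mathrm{Ass}(N)$, and for the dimension filtration of any finitely generated module one has $\mathrm{Ass}(N)=\bigcup_k\mathrm{Ass}(\mathcal{D}_k(N)/\mathcal{D}_{k-1}(N))$; hence $\ell$ is regular on every layer, quotienting the filtration by $\ell$ produces a CM filtration of $N/\ell N$, and one is done directly at the level of the filtration, with no need for the Ext/local duality machinery. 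Your worry about $\ell$ being regular on the layers is real but resolved by exactly the associated-primes observation you yourself suggest.

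Second, and more seriously, the converse direction ($N/\ell N$ sCM $\Rightarrow N$ sCM) --- which the full ``iff'' of the Proposition requires --- is not established by ``reading the same sequences backwards.'' The short exact sequences
\[
0\longrightarrow \mathrm{Ext}^{j-1}_A(N,A)/\ell\longrightarrow \mathrm{Ext}^{j-1}_{A/\ell A}(N/\ell N, A/\ell A)\longrightarrow \bigl(0:_{\mathrm{Ext}^{j}_A(N,A)}\ell\bigr)\longrightarrow 0
\]
only identify $\mathrm{Ext}^{j-1}_A(N,A)/\ell$ with the middle term once one already knows $\bigl(0:_{\mathrm{Ext}^{j}_A(N,A)}\ell\bigr)=0$; and in the forward direction that vanishing was deduced \emph{from} the sequential Cohen--Macaulayness of $N$ (regularity of $\ell$ on the canonical modules of the CM layers). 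In the converse direction one has no such input, and a priori an $N$-regular $\ell$ may fail to be regular on the deficiency modules of a non-sCM $N$; indeed the literature works with ``filter-regular'' elements precisely to control this. Some additional argument is needed here --- either a correct descending induction showing the annihilators must vanish under the hypothesis on $N/\ell N$, or the observation that for the specific polarization forms $\theta_{i,j}$ much stronger structural facts are available (polarization preserves all graded Betti numbers and commutes with Alexander duality, so one can invoke the Herzog--Hibi characterization of sequential Cohen--Macaulayness via componentwise linearity of the Alexander dual, which is the route I would expect the cited reference to take). As it stands, the ``only if'' of the Proposition (small ring sCM implies big ring sCM) is asserted but not proved.
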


A set corresponds to a square-free monomial, when written multiplicatively. We let $P_1$ be the lattice of subsets of $\prod\limits_{ i= 1}^{l } \prod\limits_{ j=1}^{n_i} x_{i,j}$. Let $J$ be the set of all monomials of $S$ which  are in $pol(I_\Delta)$. Then $\Delta_1=P_1\setminus J$ is a $\PP$-complex and it is clearly a simplicial complex. It is also clear by definition that the  Stanley-Reisner ring associated to $\Delta_1$ is given by $S/pol(I_\Delta)$.
The simplicial complex $\Delta_1$ is called the {\em polarized simplicial complex} associated to the multicomplex $\Delta$.

Here is another description of the polarized simplicial complex associated to $\Delta$ that will be useful later.


\begin{theorem}\label{thm:faces}
Let $\Delta_1$ be the polarized simplicial complex associated to a multicomplex $\Delta$. Then
    \[
    \Delta_1=\left\langle \prod\limits_{1 \le i \le l}\prod\limits_{\substack{ 1 \le j \le n_i \\ j\neq a_{i}+1}}x_{i,j}\colon x_1^{a_1}\cdots x_l^{a_l}\in \Delta\right\rangle.
    \]
\end{theorem}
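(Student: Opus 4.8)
The plan is to unravel both sides as explicit sets of subsets of the index set $\{(i,j) : 1\le i\le l,\ 1\le j\le n_i\}$ and show the two generated simplicial complexes coincide. Recall that $\Delta_1 = P_1 \setminus J$, where $J$ is the set of monomials of $S$ lying in $pol(I_\Delta)$, and $pol(I_\Delta)$ is generated by $pol(M)$ for $M$ ranging over the monomials of $P \setminus \Delta$ (the nonfaces). The first step is to write down, for a nonface $M = x_1^{b_1}\cdots x_l^{b_l}$, its polarization $pol(M) = \prod_{i}\prod_{1\le j\le b_i} x_{i,j}$, and observe that a square-free monomial $\prod_{(i,j)\in T} x_{i,j}$ is divisible by $pol(M)$ exactly when $T \supseteq \{(i,j) : 1\le j \le b_i\}$ for that nonface $M$. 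Hence $T$ corresponds to a face of $\Delta_1$ iff for \emph{every} nonface $M$ there is some index $i$ with $\{(i,1),\dots,(i,b_i)\} \not\subseteq T$, i.e.\ $(i,c_i)\notin T$ for some $c_i \le b_i$, where I write $c_i = c_i(T) := \min\{ j : (i,j)\notin T\}$ (with $c_i = n_i+1$ if the whole column is in $T$).

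The second step is to reformulate this condition. Given $T \in P_1$, define $a_i = a_i(T) := c_i(T) - 1$, so $a_i$ is the length of the initial run $(i,1),\dots,(i,a_i)$ of column $i$ contained in $T$. I claim $T$ is a face of $\Delta_1$ iff $\prod_i x_i^{a_i(T)}$ is a face of $\Delta$. Indeed, if some nonface $M = \prod_i x_i^{b_i}$ satisfied $b_i \le a_i$ for all $i$, then $\{(i,1),\dots,(i,b_i)\}\subseteq T$ for all $i$, so $pol(M) \mid \prod_{(i,j)\in T}x_{i,j}$ and $T\notin \Delta_1$; conversely if $T\notin\Delta_1$ some nonface $M$ divides it, forcing $b_i \le a_i$ for all $i$, and then $\prod_i x_i^{a_i}$ dominates the nonface $M$, so it is itself a nonface (multicomplexes are closed under going down, equivalently nonfaces are closed under going up). Thus $T\in\Delta_1 \iff \prod_i x_i^{a_i(T)} \in \Delta$.

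The third step translates the right-hand side. A generator on the right, for a face $x_1^{a_1}\cdots x_l^{a_l}\in\Delta$, is the square-free monomial supported on $G(a) := \{(i,j) : 1\le i\le l,\ 1\le j\le n_i,\ j \ne a_i+1\}$ — that is, each column $i$ has exactly the single hole $(i,a_i+1)$ removed (and if $a_i = n_i$, no hole). The subsets $T$ lying in $\langle \prod_{G(a)} x_{i,j} : \cdots\rangle$ are exactly those contained in some $G(a)$ with $a$ a face of $\Delta$. Now, for $T \subseteq G(a)$ one checks $a_i(T) \le a_i$ for every $i$ (since $(i,a_i+1)\notin G(a)\supseteq T$ cuts column $i$ at position $a_i+1$ at the latest), hence $\prod_i x_i^{a_i(T)}$ divides $\prod_i x_i^{a_i}$ and so is a face of $\Delta$; by step two, $T\in\Delta_1$. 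Conversely, if $T\in\Delta_1$, set $a = a(T)$; then $\prod_i x_i^{a_i}\in\Delta$ by step two, and $T \subseteq G(a)$ because column $i$ of $T$ is exactly the initial run of length $a_i$, which is contained in the "column $i$ minus the hole at $a_i+1$" set $G(a)$. This gives the two inclusions and hence equality of the two simplicial complexes.

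The main obstacle I anticipate is bookkeeping rather than conceptual: being careful that "contained in $T$" must be an \emph{initial segment} of each column (polarization produces $x_{i,1}\cdots x_{i,b_i}$, not an arbitrary size-$b_i$ subset), so the relevant invariant of $T$ is the vector of initial-run lengths $a_i(T)$, not merely the column sizes; and handling the boundary case $a_i = n_i$ (no hole in column $i$) uniformly. Once the invariant $a(T)$ is pinned down, the equivalence $T\in\Delta_1 \iff a(T)$ is a face of $\Delta$, together with the observation that $T\subseteq G(a) \iff a(T)\le a$ componentwise, makes both inclusions routine. One should also note explicitly that the complement of a multicomplex inside $P$ is an up-set (used in step two), which is immediate from the down-closure axiom of multicomplexes.
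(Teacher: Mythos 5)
Your proposal is correct and follows essentially the same route as the paper's proof: both arguments extract the vector $a(T)$ of initial-run lengths of a candidate face $T$, use that $\prod_i x_i^{a_i(T)}$ must be a face of $\Delta$ precisely when $T$ avoids all polarized nonfaces, and observe that $T\subseteq G(a(T))$ because $G(a(T))$ omits only the positions $(i,a_i(T)+1)$, which $T$ avoids by definition. Your formulation of the intermediate equivalence $T\in\Delta_1 \iff a(T)\in\Delta$ is a clean way to package what the paper does implicitly; the only blemish is the phrase ``column $i$ of $T$ is exactly the initial run of length $a_i$,'' which is false in general (a face $T$ of $\Delta_1$ may well contain indices $(i,j)$ with $j>a_i(T)+1$), though the inclusion $T\subseteq G(a(T))$ you draw from it does hold, since the only thing one needs is that $(i,a_i(T)+1)\notin T$.
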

\begin{proof}
    Let $\tilde{\Delta}$ be the simplicial complex given by
    \[
    \tilde{\Delta}=\left\langle \prod\limits_{1\leq i\leq l}\prod\limits_{\substack{ 1\leq j\leq n_i \\ j\neq a_{i}+1}}x_{i,j}\colon x_1^{a_1}\cdots x_l^{a_l}\in \Delta\right\rangle.
    \]
    Our goal is to show that $\Delta_1=\tilde{\Delta}$. 
    
    First, we show that $\tilde{\Delta} \subseteq \Delta_1$. To do this, since $\Delta_1$ is a simplicial complex, it is enough to show that $\prod\limits_{1\leq i\leq l}\prod\limits_{\substack{ 1\leq j\leq n_i \\ j\neq a_{i}+1}}x_{i,j}\in \Delta_1$ for every $x_1^{a_1}\cdots x_l^{a_l}\in \Delta$. So, let $x_1^{a_1}\cdots x_l^{a_l}\in \Delta$ and, by contradiction, suppose instead that \[
    \prod\limits_{1\leq i\leq l}\prod\limits_{\substack{ 1\leq j\leq n_i \\ j\neq a_{i}+1}}x_{i,j}\notin \Delta_1.
    \]
    Hence $\prod\limits_{1\leq i\leq l}\prod\limits_{\substack{ 1\leq j\leq n_i \\ j\neq a_{i}+1}}x_{i,j}\in J$. Since $J$ is a monomial ideal,   $\prod\limits_{1\leq i\leq l}\prod\limits_{\substack{ 1\leq j\leq n_i \\ j\neq a_{i}+1}}x_{i,j}$ is divisible by a monomial which is a generator in $pol(I_\Delta)$. More precisely $\prod\limits_{1\leq i\leq l}\prod\limits_{\substack{ 1\leq j\leq n_i \\ j\neq a_{i}+1}}x_{i,j}$ is divisible by $\prod\limits_{1\leq i\leq l}\prod\limits_{1\leq j\leq b_i}x_{i,j}$, where $b_i\leq a_i$ and $x_1^{b_1}\cdots x_l^{b_l}\notin \Delta$. But $x_1^{a_1}\cdots x_l^{a_l}\in \Delta$ implies that $x_1^{b_1}\cdots x_l^{b_l}\in \Delta$ and hence we have a contradiction. Therefore, by contradiction,  $\prod\limits_{1\leq i\leq l}\prod\limits_{\substack{ 1\leq j\leq n_i \\ j\neq a_{i}+1}}x_{i,j}\in \Delta_1$ and we are done with the first part of this proof.

    Conversely, let $u=\prod\limits_{1\leq i\leq l}\prod\limits_{1\leq j\leq n_i} x_{i,j}^{r_{i,j}}\in \Delta_1$, where $r_{i,j}\in \{0,1\}$. Since $u\in \Delta_1$, then $u\notin pol(I_\Delta)$.
    Let $b_i=\max\{k\colon r_{i,j}=1, \text{ for all } 1\leq j\leq k\}$ (if the set is empty, we take $b_i=0)$. 
    Now, $v=\prod\limits_{1\leq i\leq l}\prod\limits_{1\leq j\leq b_i} x_{i,j}$ is not in $pol(I_\Delta)$ (otherwise $u$ would be in $pol(I_\Delta)$ as $v$ divides $u$. Since $v\notin pol(I_\Delta)$, then $\prod\limits_{i=1}^l x_i^{b_i}\notin I_\Delta$. Therefore $\prod\limits_{i=1}^l x_i^{b_i}\in \Delta$. 
    By definition of $\tilde{\Delta}$, $w=\prod\limits_{1\leq i\leq l}\prod\limits_{\substack{ 1\leq j\leq n_i \\ j\neq b_{i}+1}}x_{i,j}\in \tilde{\Delta}$. 
    The only variables which are not in $w$ are the variables in $\prod\limits_{i=1}^l x_{i,b_{i}+1}$. And by the definition of the $b_i$'s, those are variables neither in $u$.
    Therefore $u$ divides $w$ and since $\tilde{\Delta}$ is a simplicial complex and $w\in \tilde{\Delta}$, we have $u\in \tilde{\Delta}$. Hence $\Delta_1\in \tilde{\Delta}$.


\end{proof}

\begin{remark}
    Notice that the polarized simplicial complex $\Delta_1$ associated with $\Delta$ is not necessarily pure. For example, consider the power lattice of the multiset subsets of $ \hat{1} = x_1^3 x_2^3$. Let $\Delta$ be the multicomplex with only two facets $F_1 = x_1^2 x_2^2$ and $F_2 = x_1 x_2^3$. Then the polarized simplicial complex 
    $\Delta_1$ associated to $\Delta$ has at least two facets $x_{1,1}x_{1,2} x_{2,1}x_{2,2}$ and $x_{1,1}x_{1,3} x_{2,1} x_{2,2} x_{2,3}$. 
\end{remark}


\begin{corollary}\label{cor:maxelement}
Let $\Delta_1$ be the polarised simplicial complex associated to the multicomplex $\Delta$. Then   $\Delta_1=\langle F\rangle$ where
    \[
    F=\left\lbrace \prod\limits_{1\leq i\leq l}\prod_{\substack{ 1\leq j\leq n_i \\ j\neq b_{i}+1}} x_{i,j}  \colon 0\leq b_i\leq a_i, \prod_{i=1}^l x_i^{a_i} \text{ is a facet of } \Delta \right\rbrace
    \]
\end{corollary}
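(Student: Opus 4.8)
The plan is to read this off directly from Theorem~\ref{thm:faces}. Denote by $G$ the generating set appearing there, i.e.
\[
G = \left\{ \prod_{1 \le i \le l}\;\prod_{\substack{1 \le j \le n_i \\ j \ne c_i + 1}} x_{i,j} \;\colon\; x_1^{c_1}\cdots x_l^{c_l} \in \Delta \right\},
\]
so that $\Delta_1 = \langle G\rangle$. I claim that $G = F$ as sets of (square-free) monomials; granting this, $\langle F\rangle = \langle G\rangle = \Delta_1$, which is the assertion.

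For $G \subseteq F$: given a face $x_1^{c_1}\cdots x_l^{c_l}$ of $\Delta$, finiteness of $\Delta$ yields a facet $x_1^{a_1}\cdots x_l^{a_l}$ with $x_1^{c_1}\cdots x_l^{c_l} \mid x_1^{a_1}\cdots x_l^{a_l}$, i.e. $0 \le c_i \le a_i$ for every $i$; taking $b_i = c_i$ in the description of $F$ exhibits $\prod_i \prod_{j \ne c_i + 1} x_{i,j}$ as an element of $F$. For $F \subseteq G$: given a facet $x_1^{a_1}\cdots x_l^{a_l}$ of $\Delta$ and integers $0 \le b_i \le a_i$, the monomial $x_1^{b_1}\cdots x_l^{b_l}$ divides $x_1^{a_1}\cdots x_l^{a_l}$, hence lies in $\Delta$ since $\Delta$ is a multicomplex, so $\prod_i \prod_{j \ne b_i+1}x_{i,j} \in G$. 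This proves $G = F$.

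I expect no real obstacle here: the corollary simply repackages the ``index over all faces'' description of Theorem~\ref{thm:faces} into an ``index over all facets and all admissible divisor-exponents $b$'' description, and the equivalence of the two is immediate from closure of $\Delta$ under divisors together with the existence of a facet above any face. The only things to keep track of are notational — that the substitution $b_i = c_i$ reproduces verbatim the monomial occurring in $G$, and that a facet of $\Delta$ is an element of the ambient multiset lattice, so $a_i \le n_i$ and the inner products $\prod_{1 \le j \le n_i}$ are well posed. Note also that $F$ is merely a generating set; as the preceding remark shows, $\Delta_1$ need not be pure, so $F$ need not coincide with the set of facets of $\Delta_1$.
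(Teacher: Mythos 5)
Your argument is correct and is exactly the intended one: the paper's own proof of this corollary is just "This is clear," and your expansion — showing the two index sets produce the same set of square-free monomials via closure of $\Delta$ under divisors and existence of a facet above any face — is precisely what that "clear" stands for. No issues.
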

\begin{proof}         
This is clear.
\end{proof}

We recall the notion of shellability for non-pure simplicial complexes introduced by Bj{\"o}rner and Wachs.
\begin{definition}
Let $\Delta_1$ be a (not necessarily pure) simplicial complex. $\Delta_1$ is said to be shellable if there exists an ordering of the facets of $\Delta_1$, $F_1,\dots,F_l$ such that for every $i<j$, there is $k$ such that $F_i\cap F_j\subseteq F_k\cap F_j$ and $|F_k\cap F_j| = |F_j|-1$.
\end{definition}

\begin{theorem}\label{thm:shellable}
    If $\Delta$ is a shellable multicomplex, then the polarized simplicial complex $\Delta_1$ associated to $\Delta$, is a (not necessarily pure) shellable simplicial complex.
\end{theorem}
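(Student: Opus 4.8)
The plan is to transfer a shelling order on the facets of $\Delta$ to a shelling order on the facets of $\Delta_1$, using the explicit description of the facets of $\Delta_1$ from Corollary \ref{cor:maxelement}. Suppose $F_1, \dots, F_t$ is a shelling of the multicomplex $\Delta$, with $F_p = \prod_{i=1}^l x_i^{a_i^{(p)}}$. By Corollary \ref{cor:maxelement}, every facet of $\Delta_1$ has the form $G = \prod_{1\le i\le l}\prod_{1\le j\le n_i,\, j\neq b_i+1} x_{i,j}$ for some facet $\prod x_i^{a_i^{(p)}}$ of $\Delta$ and some choice $0\le b_i\le a_i^{(p)}$; write $G = G(p; b_1,\dots,b_l)$. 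Not every such tuple gives a facet (some are contained in others coming from different $p$), but the facets of $\Delta_1$ are among these. The first step is to put a total order on the facets of $\Delta_1$: order the pairs $(p; \mathbf{b})$ first by the index $p$ of the associated facet of $\Delta$ (using the given shelling order), and then, among those with the same $p$, by some fixed linear extension of the partial order on the tuples $\mathbf{b}=(b_1,\dots,b_l)$ — for instance decreasing in the lexicographic order on $(b_1,\dots,b_l)$, so that ``larger'' faces (smaller $b_i$, hence more variables) come first. One must first check this is well defined on actual facets: if a set $G$ arises from two different representations, we need the order to be consistent, which should follow from choosing, for each facet of $\Delta_1$, its lexicographically-first representative, or by arguing the representation with minimal $p$ is canonical.

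The second step is to verify the shelling condition. Fix two facets $G = G(p;\mathbf{b})$ and $G' = G(q;\mathbf{c})$ of $\Delta_1$ with $G$ earlier than $G'$ in the order; we must produce a facet $H$ earlier than $G'$ with $G\cap G' \subseteq H \cap G'$ and $|H\cap G'| = |G'|-1$. Split into two cases mirroring the two levels of the order. \textbf{Case 1: $p = q$.} Then $G$ and $G'$ both come from the same facet $\prod x_i^{a_i}$ of $\Delta$ and differ only in the tuples $\mathbf{b}, \mathbf{c}$; here $G\cap G'$ is obtained from $\prod_{i,j} x_{i,j}$ by deleting the variables $x_{i, b_i+1}$ and $x_{i, c_i+1}$ for all $i$ with $b_i < a_i$ or $c_i < a_i$ respectively. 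Since $G$ comes first, there is some coordinate where $\mathbf b$ is lexicographically larger; one then walks $\mathbf c$ one step toward $\mathbf b$ in that coordinate to get a tuple $\mathbf c'$, set $H = G(p;\mathbf c')$, and checks directly that $|H \cap G'| = |G'|-1$ and $G\cap G'\subseteq H\cap G'$ — this is the same kind of ``single missing vertex'' bookkeeping used for shellings of skeleta, just done coordinatewise. One also has to confirm $H$ is actually a facet of $\Delta_1$ (or replace it by a facet containing it and lying earlier). \textbf{Case 2: $p < q$ (or $p=q$ but this sub-argument is unneeded).} Now use the shelling of $\Delta$: since $F_p$ precedes $F_q$, there is $k<q$ with $F_p\cap F_q \subseteq F_k\cap F_q$ and $|F_k\cap F_q| = |F_q|-1$ (counted with multiplicity), i.e. $F_k \cap F_q$ is a maximal proper sub-multiset of $F_q$, obtained by lowering exactly one exponent, say the exponent of $x_m$, from $a_m^{(q)}$ to $a_m^{(q)}-1$. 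Translate this: choosing $H$ to be the facet $G(k; \mathbf d)$ of $\Delta_1$ with $\mathbf d$ matching $\mathbf c$ where possible, one gets that $H \cap G'$ differs from $G'$ by exactly the single variable $x_{m, a_m^{(q)}}$ (the ``top'' polarization variable of the coordinate where the shelling of $\Delta$ removed an element), so $|H\cap G'| = |G'|-1$, and the containment $F_p\cap F_q \subseteq F_k\cap F_q$ polarizes to $G\cap G' \subseteq H\cap G'$ after accounting for the $\mathbf b, \mathbf c$ deletions. Since $k < q$, $H$ precedes $G'$ in our order.

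The main obstacle I expect is the careful bookkeeping in translating ``$|F_k\cap F_q| = |F_q| - 1$ with multiplicity'' into ``$|H\cap G'| = |G'|-1$ for sets of polarization variables,'' together with verifying that the candidate $H$ produced in each case is a genuine facet of $\Delta_1$ and not merely a face — and, relatedly, handling the ambiguity that a single facet of $\Delta_1$ may have several representations $(p;\mathbf b)$. The key technical point making this work is Theorem \ref{thm:faces} / Corollary \ref{cor:maxelement}: the fact that the facets of $\Delta_1$ are controlled by the facets of $\Delta$ and, for each, a ``staircase'' of choices $\mathbf b$, so that lowering one exponent of a facet of $\Delta$ by $1$ corresponds to removing exactly one variable from the corresponding polarized facet. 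Once the dictionary between the two combinatorial structures is pinned down precisely, both cases reduce to elementary arguments, and Case 1 is essentially the standard fact that a simplex (here a product of simplices indexed by coordinates) has a shellable boundary-type structure.
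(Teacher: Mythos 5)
Your general plan is the same as the paper's: order the facets of $\Delta_1$ first by the shelling index $p$ of the $\prec_\Delta$-smallest facet $F_p$ of $\Delta$ containing the ``depolarized'' face $\prod_i x_i^{b_i}$, then by a lexicographic order on the exponent tuples, and split the verification into the cases $p=q$ and $p<q$. However, your Case 1 construction, read literally, is wrong. If you ``walk $\mathbf{c}$ one step toward $\mathbf{b}$'' in the first differing coordinate $e$ in the natural sense $c'_e=c_e+1$, the resulting $H$ does satisfy $|H\cap G'|=|G'|-1$, but the containment $G\cap G'\subseteq H\cap G'$ fails whenever $b_e>c_e+1$: in coordinate $e$, $G\cap G'$ is missing $x_{e,b_e+1}$ and $x_{e,c_e+1}$, whereas $H\cap G'$ is missing $x_{e,c_e+1}$ and $x_{e,c_e+2}$, so $x_{e,c_e+2}$ lies in $G\cap G'$ but not in $H\cap G'$. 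The correct move, which is what the paper does, is to jump all the way: set $c'_e=b_e$ and leave all other coordinates equal to $\mathbf{c}$. Then $H\cap G'$ coincides with $G\cap G'$ in coordinate $e$, and the containment holds trivially in the remaining coordinates because $H$ and $G'$ agree there.

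Your Case 2 sketch (``$\mathbf{d}$ matching $\mathbf{c}$ where possible'') is too vague and in fact hides the only nontrivial choice. Once the shelling of $\Delta$ produces $F_k=W$ with $W\cap F_q$ of corank one, $W$ differs from $F_q$ by lowering one exponent, say at index $n$, and raising another; one then shows (exactly because $F_q$ is the $\prec_\Delta$-smallest facet containing $\prod_i x_i^{c_i}$ while $W\prec_\Delta F_q$) that $c_n$ equals the exponent of $x_n$ in $F_q$ and that $b_n<c_n$. The tuple $\mathbf{d}$ must be taken with $d_n=b_n$ (the $n$-th coordinate of $G$'s tuple, not a generic value $\le w_n$) and $d_i=c_i$ otherwise; a different $d_n\le w_n$ would again break $G\cap G'\subseteq H\cap G'$. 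You should also close the loop you flag yourself: check that each candidate $H$ is genuinely a facet of $\Delta_1$ and that it precedes $G'$ in your order (in Case 2 this uses $k<q$, in Case 1 it uses that replacing $c_e$ by $b_e>c_e$ makes the tuple lex-larger while keeping the $\prec_\Delta$-smallest representative equal to $F_p$).
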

\begin{proof}
    Let us denote by $\prec_\Delta$ the ordering of the facet of $\Delta$ which makes it shellable.
    For this proof, we need to define an ordering on the facets of $\Delta_1$. Notice that an element of $F$ in Corollary \ref{cor:maxelement} is not necessarily a facet of $\Delta_1$. However, a facet of $\Delta_1$ must be an element of $F$.
    Let $u = \prod\limits_{i=1}^l (\prod\limits_{j=1}^{n_i} x_{i,j}\setminus x_{i,b_i+1})$ be a facet of $\Delta_1$. To $u$, we associated the face $U_1=\prod\limits_{i=1}^l x_i^{b_i}$ of $\Delta$ and the facet $U=\prod\limits_{i=1}^l x_i^{a_i}$ such that if $U_1\subseteq U'$ for another facet $U'$ of $\Delta$, then $U\prec_\Delta U'$. This means that $U$ is the smallest facet of $\Delta$ which contains $U_1$.
    Similarly, let $v = \prod\limits_{i=1}^l (\prod\limits_{j=1}^{n_i} x_{i,j}\setminus x_{i,b'_i+1})$, $V_1=\prod\limits_{i=1}^l x_i^{b'_i}$ be the associated face of $\Delta$ and $V=\prod\limits_{i=1}^l x_i^{a'_i}$ be the associated facet. We say that $u\preceq_{\Delta_1} v$ if either $u=v$ or  $u\neq v$ and one of the following is satisfied:
    \begin{enumerate}[(i)]
        \item $U=V$ and if $e$ is the smallest integer such that $b_e\neq b'_e$, then $b_e>b'_e$. 
        \item $U\prec_{\Delta} V$.
    \end{enumerate}
    This is a total ordering $\preceq_{\Delta_1}$ on the facets of $\Delta_1$. We now show that this defines a shelling on $\Delta_1$.
    
    So, let us write $u = \prod\limits_{1\leq i\leq l}\prod\limits_{\substack{ 1\leq j\leq n_i \\ j\neq b_{i}+1}} x_{i,j}$ and $v = \prod\limits_{1\leq i\leq l}\prod\limits_{\substack{ 1\leq j\leq n_i \\ j\neq b'_{i}+1}} x_{i,j}$ and suppose $u\prec_{\Delta_1} v$. Assume that $u,v$ correspond, respectively, to $U_1=\prod\limits_{i=1}^l x_i^{b_i}\subseteq \prod\limits_{i=1}^l x_i^{a_i}=U$ and $V_1=\prod\limits_{i=1}^l x_i^{b'_i}\subseteq \prod\limits_{i=1}^l x_i^{a'_i}=V$.
    There are two cases:\
    
        \textbf{Case 1.} $U=V$. Let $e$ be the smallest integer such that $b_e\neq b'_e$. In this case $b_e>b'_e$. 
        We can then take $W_1 = x_e^{b_e}\prod\limits_{\substack{1 \le i \le l \\i\neq e}} x_i^{b'_i}\subseteq U$. Since $V_1\subseteq W_1$, then $U$ must also be the smallest (w.r.t $\preceq_\Delta$) which contains $W_1$. 
        This defines $w=\prod\limits_{1\leq i\leq l}\prod\limits_{\substack{ 1\leq j\leq n_i \\ j\neq b^*_{i}+1}} x_{i,j}$, where $b^*_i = b'_i$ for $i\neq e$ and $b^*_e=b_e$. It is not hard to see that $w\prec_{\Delta_1} v$. What remains is to show that $u\cap v\subseteq w\cap v$ and $|w\cap v| = |v|-1$. 
        \begin{align*}
            u\cap v &= \left(\prod\limits_{1\le i \le l}\prod_{\substack{ j=1 \\ j\neq b_{i}+1}}^{n_i} x_{i,j} \right) \cap \left(\prod_{1\leq i\leq l}\prod_{\substack{ 1\leq j\leq n_i \\ j\neq b'_{i}+1}} x_{i,j}\right)\\
            &= \prod\limits_{1\leq i\leq l} \left(\prod_{\substack{ 1\leq j\leq n_i \\ j\neq b_{i}+1}} x_{i,j} \right)\cap \left(\prod_{\substack{ 1\leq j\leq n_i \\ j\neq b'_{i}+1}} x_{i,j} \right)\\
            &= \left(\prod_{\substack{ 1\leq j\leq n_e \\ j\neq b_{e}+1}} x_{e,j} \right)\cap \left(\prod_{\substack{ 1\leq j\leq n_e \\ j\neq b'_{e}+1}} x_{e,j} \right)\cdot \prod\limits_{\substack{1\leq i\leq l\\i\neq e}} \left(\prod_{\substack{ 1\leq j\leq n_i \\ j\neq b_{i}+1}} x_{i,j} \right)\cap \left(\prod_{\substack{ 1\leq j\leq n_i \\ j\neq b'_{i}+1}} x_{i,j} \right)
        \end{align*}
        But for $i\neq e$,
        \[
        \left(\prod_{\substack{ 1\leq j\leq n_i \\ j\neq b_{i}+1}} x_{i,j} \right)\cap \left(\prod_{\substack{ 1\leq j\leq n_i \\ j\neq b'_{i}+1}} x_{i,j} \right) \subseteq \left(\prod_{\substack{ 1\leq j\leq n_i \\ j\neq b'_{i}+1}} x_{i,j} \right) = \left(\prod_{\substack{ 1\leq j\leq n_i \\ j\neq b^*_{i}+1}} x_{i,j} \right)\cap \left(\prod_{\substack{ 1\leq j\leq n_i \\ j\neq b'_{i}+1}} x_{i,j} \right).
        \]
        Hence 
        \begin{align*}
            u\cap v &\subseteq \left(\prod_{\substack{ 1\leq j\leq n_e \\ j\neq b_{e}+1}} x_{e,j} \right)\cap \left(\prod_{\substack{ 1\leq j\leq n_e \\ j\neq b'_{e}+1}} x_{e,j} \right)\cdot \prod\limits_{\substack{1\leq i\leq l\\i\neq e}} \left(\prod_{\substack{ 1\leq j\leq n_i \\ j\neq b^*_{i}+1}} x_{i,j} \right)\cap \left(\prod_{\substack{ 1\leq j\leq n_i \\ j\neq b'_{i}+1}} x_{i,j} \right)
        \end{align*}
        Since $b_e=b^*_e$, then
        \begin{align*}
            u\cap v &\subseteq \prod\limits_{1\leq i\leq l} \left(\prod_{\substack{ 1\leq j\leq n_i \\ j\neq b^*_{i}+1}} x_{i,j} \right)\cap \left(\prod_{\substack{ 1\leq j\leq n_i \\ j\neq b'_{i}+1}} x_{i,j} \right)\\
             &\subseteq  \left(\prod\limits_{1\leq i\leq l}\prod_{\substack{ 1\leq j\leq n_i \\ j\neq b^*_{i}+1}} x_{i,j} \right)\cap \left(\prod\limits_{1\leq i\leq l}\prod_{\substack{ 1\leq j\leq n_i \\ j\neq b'_{i}+1}} x_{i,j} \right)\\
            &\subseteq w\cap v.
        \end{align*}
        Finally, 
        \begin{align*}
            |w\cap v| &= \left| \left(\prod\limits_{1\leq i\leq l}\prod_{\substack{ 1\leq j\leq n_i \\ j\neq b^*_{i}+1}} x_{i,j} \right)\cap \left(\prod\limits_{1\leq i\leq l}\prod_{\substack{ 1\leq j\leq n_i \\ j\neq b'_{i}+1}} x_{i,j} \right) \right| \\
            &= \left| \prod\limits_{1\leq i\leq l} \left(\prod_{\substack{ 1\leq j\leq n_i \\ j\neq b^*_{i}+1}} x_{i,j} \right)\cap \left(\prod_{\substack{ 1\leq j\leq n_i \\ j\neq b'_{i}+1}} x_{i,j} \right) \right|\\
            &= \left|\left(\prod_{\substack{ 1\leq j\leq n_e \\ j\neq b_{e}+1}} x_{e,j} \right)\cap \left(\prod_{\substack{ 1\leq j\leq n_e \\ j\neq b'_{e}+1}} x_{e,j} \right) \cdot \prod\limits_{\substack{1\leq i\leq l \\ i\neq e}}  \left(\prod_{\substack{ 1\leq j\leq n_i \\ j\neq b'_{i}+1}} x_{i,j} \right)\right|.
        \end{align*}
        But \[
            \left|\left(\prod_{\substack{ 1\leq j\leq n_e \\ j\neq b_{e}+1}} x_{e,j} \right)\cap \left(\prod_{\substack{ 1\leq j\leq n_e \\ j\neq b'_{e}+1}} x_{e,j} \right)\right| = \left|\left(\prod_{\substack{ 1\leq j\leq n_e \\ j\neq b'_{e}+1}} x_{e,j} \right)\right|-1.
        \]
        Hence $|w\cap v| = |v|-1$
        
         \textbf{Case 2.} $U\prec V$. By shellability of the multicomplex, there is $W\prec_\Delta V$ such that $U\cap V\subseteq W\cap V$ and $|W\cap V|=|V|-1$. For $|W\cap V|=|V|-1$ to hold, $W$ must be of the form $W=x_m^{a'_m+1}x_n^{a'_n-1}\prod\limits_{\substack{1 \le i \le l \\i\neq m,n}} x_i^{a'_i}$ for some $1\leq m,n\leq l$. Since $W\prec_{\Delta}V$ and $V$ being the ``smallest'' to contain $V_1$, we must have $V_1\nsubseteq W$. Therefore $b'_n\geq a'_n$. By looking at the valuation at $x_n$, $v_{x_n}(W\cap V)\leq v_{x_n}(W)=a'_n-1$. Furthermore $U\cap V\subseteq W\cap V$, therefore $v_{x_n}(U\cap V) \leq v_{x_n}(W\cap V)\leq a'_n-1$. Since $v_{x_n}(V) = a'_n$ and $v_{x_n}(U\cap V) = \min\{ v_{x_n}(U),v_{x_n}(V) \}$, we must have $a_n = v_{x_n}(U)\leq a'_n-1$. Therefore $b_n\leq a_n\leq a'_n-1$. So we can take $W_1 = x_n^{b_n}\prod\limits_{\substack{1\le i\le l \\ i\neq n}} x_i^{b'_i}$ and we have $W_1\subseteq W$. This gives us $w=\prod\limits_{1\leq i\leq l}\prod\limits_{\substack{ 1\leq j\leq n_i \\ j\neq b^*_{i}+1}} x_{i,j}$, where $b^*_i=b'_i$ for $i\neq n$ and $b^*_n=b_n$. Since $W\prec_\Delta V$, then $w\prec_{\Delta_1} v$. Now $b_n\leq a'_n-1<a'_n\leq b'_n$ so that $b^*_n\neq b'_n$ and similarly to the previous case, one can show that $u\cap v \subseteq w\cap v$ and $|w\cap v|=|v|-1$.

\end{proof}

We record here a result about sequential Cohen-Macaulayness of the Stanley-Reisner ring associated to a non-pure shellable simplicial complex.

\begin{proposition}[{\cite[Sec III.2]{Stan}}]\label{pro:polCM}
    If $\Delta_1$ is a (not necessarily pure) shellable simplicial complex whose associated Stanley-Reisner ideal in a polynomial ring $R_1$ is $I_{\Delta_1}$, then $R_1/I_{\Delta_1}$ is sequentially Cohen-Macaulay.
\end{proposition}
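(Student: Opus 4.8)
The plan is to reduce the statement to the pure-skeleton criterion for sequential Cohen-Macaulayness and then to exploit that shellability is inherited by pure skeletons. Write $M = R_1/I_{\Delta_1} = \FF[\Delta_1]$ for the Stanley--Reisner ring and, for $0 \le i \le \dim\Delta_1$, let $\Delta_1^{[i]}$ be the \emph{pure $i$-skeleton} of $\Delta_1$, that is, the subcomplex generated by the $i$-dimensional faces of $\Delta_1$. The first step is to invoke the criterion recorded in \cite[Sec.~III.2]{Stan} (in this form due to Duval): $M$ is sequentially Cohen-Macaulay over $\FF$ if and only if $\FF[\Delta_1^{[i]}]$ is Cohen-Macaulay for every $i$. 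The implication we need is that if every pure skeleton is Cohen-Macaulay then $M$ is sequentially Cohen-Macaulay: in that case the dimension filtration $0 = D_{-1} \subseteq D_0 \subseteq \cdots \subseteq D_{\dim\Delta_1 + 1} = M$, where $D_i$ is the largest graded submodule of $M$ of Krull dimension at most $i$, is a $CM$-filtration, its $i$-th non-zero successive quotient having Krull dimension $i$ and being Cohen-Macaulay exactly because the pure $i$-skeleton is. Hence it suffices to show that every pure skeleton of a shellable simplicial complex is Cohen-Macaulay.

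For that I would use the theorem of Björner and Wachs on nonpure shellability: if $\Delta_1$ is shellable with shelling $F_1, \dots, F_t$, then each pure skeleton $\Delta_1^{[i]}$ is again shellable, and, being pure, it is pure shellable. The shelling of $\Delta_1^{[i]}$ is produced by running through $j = 1, \dots, t$ and, for each $F_j$ with $\dim F_j \ge i$, listing the $i$-element faces contained in $F_j$ (ordered, say, lexicographically by their complement inside $F_j$), and then deleting repeated occurrences; the shelling property is verified through the ``minimal new face'' (restriction) description of a shelling. Finally, a pure shellable complex is Cohen-Macaulay over every field --- this is classical and belongs to the material of \cite[Sec.~III.2]{Stan} underlying Reisner's criterion --- so $\FF[\Delta_1^{[i]}]$ is Cohen-Macaulay for all $i$, and the reduction above completes the proof.

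The step I expect to be the main obstacle is verifying that the concatenated, de-duplicated ordering of the $i$-faces is genuinely a shelling of $\Delta_1^{[i]}$: one must understand how the minimal new face, inside $\Delta_1^{[i]}$, of an $i$-subset $G \subseteq F_j$ is controlled by the restriction $\mathcal{R}(F_j)$ of $F_j$ in the original shelling, and rule out interference between $i$-subsets arising from two different facets $F_j$ and $F_{j'}$. At the level of generality needed here this is precisely the Björner--Wachs inheritance theorem and may simply be cited; it is the only ingredient that is not formal, the remainder being the dimension-filtration bookkeeping and the classical implication from pure shellability to Cohen-Macaulayness.
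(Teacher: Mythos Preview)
The paper does not prove this proposition at all: it is simply recorded as a known result with the citation \cite[Sec~III.2]{Stan}, so there is no ``paper's own proof'' to compare against. Your sketch follows the standard route (Duval's pure-skeleton criterion combined with the Bj\"orner--Wachs inheritance of shellability to pure skeletons, then the classical implication pure shellable $\Rightarrow$ Cohen--Macaulay), which is correct and is essentially how the result is established in the literature the paper cites.
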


As a consequence of Proposition \ref{pro:polCM}, Theorem \ref{thm:shellable} and Proposition \ref{pro:Froberg}, we have the main theorem of this section.

\begin{theorem}
    If $\Delta$ is a shellable multicomplex in the power lattice of multiset subsets of $x_1^{n_1}\cdots x_l^{n_l}$, then the Stanley-Reisner ring $\FF[x_1,\dots,x_l]/I_\Delta$ is a sequentially Cohen-Macaulay ring.
\end{theorem}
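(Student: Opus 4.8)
The plan is to deduce the statement from the already-established sequential Cohen-Macaulayness of (not necessarily pure) shellable simplicial complexes, transported along polarization. All of the substantive work has been done earlier: the polarized simplicial complex $\Delta_1$ associated to $\Delta$ has been introduced, its Stanley-Reisner ideal has been identified with $pol(I_\Delta)$ (by construction $\Delta_1 = P_1 \setminus J$, where $J$ is exactly the set of monomials of $pol(I_\Delta)$ lying in $P_1$, so $S/pol(I_\Delta)$ is the Stanley-Reisner ring of $\Delta_1$); Theorem \ref{thm:shellable} shows $\Delta_1$ is shellable whenever $\Delta$ is; Proposition \ref{pro:polCM} guarantees that a shellable (possibly non-pure) simplicial complex has a sequentially Cohen-Macaulay Stanley-Reisner ring; and Proposition \ref{pro:Froberg} records that a monomial ideal yields a sequentially Cohen-Macaulay quotient if and only if its polarization does.

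Then the proof is the following short chain. Suppose $\Delta$ is shellable. By Theorem \ref{thm:shellable}, the polarized simplicial complex $\Delta_1$ is a shellable simplicial complex, with Stanley-Reisner ideal $I_{\Delta_1} = pol(I_\Delta)$ inside the polynomial ring $S$ in the variables $x_{i,j}$. Applying Proposition \ref{pro:polCM} to $\Delta_1$ gives that $S/pol(I_\Delta) = S/I_{\Delta_1}$ is sequentially Cohen-Macaulay. Finally, Proposition \ref{pro:Froberg} transfers this back: since $S/pol(I_\Delta)$ is sequentially Cohen-Macaulay, so is $\FF[x_1,\dots,x_l]/I_\Delta$, which is the claim.

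The only point requiring a little care in the assembly is a compatibility of indexing: Proposition \ref{pro:Froberg} polarizes into the ring with variables $x_{i,j}$ for $1 \le j \le \max_k a_{k,i}$, whereas $\Delta_1$ was defined inside the lattice $P_1$ of subsets of $\prod_{i=1}^l\prod_{j=1}^{n_i} x_{i,j}$. These match because every generator $M_k$ of $I_\Delta$ corresponds to an element of $P = \{\prod_{i=1}^l x_i^{a_i} : 0 \le a_i \le n_i\}$, so $\max_k a_{k,i} \le n_i$ for each $i$; the extra variables $x_{i,j}$ with $j > \max_k a_{k,i}$ act merely as a polynomial-ring extension, which affects neither Cohen-Macaulayness nor sequential Cohen-Macaulayness. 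Beyond this bookkeeping there is no obstacle: the genuine difficulty has been concentrated in Theorem \ref{thm:shellable}, whose proof builds an explicit shelling order on the facets of $\Delta_1$ from a shelling order on $\Delta$ and verifies the shelling condition case by case.
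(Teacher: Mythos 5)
Your proof is correct and follows essentially the same route as the paper: combine Theorem~\ref{thm:shellable} (shellability of $\Delta_1$), Proposition~\ref{pro:polCM} (shellable $\Rightarrow$ sequentially Cohen-Macaulay), and Proposition~\ref{pro:Froberg} (transfer along polarization). The extra remark on reconciling the variable ranges $j\le\max_k a_{k,i}$ versus $j\le n_i$ is a sensible bookkeeping observation the paper leaves implicit, but it does not change the argument.
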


\paragraph{\textbf{Acknowledgement}} We thank Ananthnarayan Hariharan and Neeraj Kumar for helpful discussion on questions related to Stanley-Reisner rings.

\bibliographystyle{plain}
\bibliography{references}

\begin{thebibliography}{10}

\bibitem{Alder10}
S.~J. Alder.
\newblock On $q$-simplicial posets.
\newblock PhD Thesis, September 2010.

\bibitem{ABS91}
N.~Alon, L.~Babai, and H.~Suzuki.
\newblock Multilinear polynomials and frankl-ray-chaudhuri-wilson type intersection theorems.
\newblock {\em Journal of Combinatorial Theory, Series A}, 58(2):165--180, 1991.

\bibitem{Bjo80}
A.~Bj\"{o}rner.
\newblock Shellable and cohen-macaulay partially ordered sets.
\newblock {\em Transactions of the American Mathematical Society}, 260(1):159–183, 1980.

\bibitem{Bjo92}
A.~Bj\"{o}rner.
\newblock {\em Homology and Shellability of Matroids and Geometric Lattices}, pages 226--283.
\newblock Encyclopedia of Mathematics and its Applications. Cambridge University Press, 1992.

\bibitem{BW83}
A.~Bj\"{o}rner and M.~Wachs.
\newblock On lexicographically shellable posets.
\newblock {\em Transactions of the American Mathematical Society}, 277(1):323–341, 1983.

\bibitem{BCF23}
J.~E. Bonin, C.~Chun, and T.~Fife.
\newblock The natural matroid of an integer polymatroid.
\newblock {\em SIAM Journal on Discrete Mathematics}, 37(3):1751--1770, 2023.

\bibitem{BM}
H.~Bruggesser and P~Mani.
\newblock Shellable decompositions of cells and spheres.
\newblock {\em Mathe. Scand.}, 29(2):197 -- 205, 1971.

\bibitem{BH}
W.~Bruns and J.~Herzog.
\newblock {\em Cohen-Macaulay Rings}.
\newblock Revised Cambridge University Press, Cambridge, 1998.

\bibitem{Cim06}
M.~Cimpoeaş.
\newblock Finite simplicial multicomplexes.
\newblock {\em Analele Ştiinţifice ale Universităţii “Ovidius" Constanţa. Seria: Matematică}, 14(2):9--30, 2006.

\bibitem{Sar05}
S.~Faridi.
\newblock {\em Commutative Algebra: Geometric, Homological, Combinatorial and Computational Aspects}, chapter Monomial ideals via square-free monomial ideals, page 107–136.
\newblock CRC Press, 2005.

\bibitem{folkman}
J.~Folkman.
\newblock The homology groups of a lattice.
\newblock {\em Journal of Mathematics and Mechanics}, pages 631--636, 1966.

\bibitem{GPR22}
S.~R. Ghorpade, R.~Pratihar, and T.~H. Randrianarisoa.
\newblock Shellability and homology of q-complexes and q-matroids.
\newblock {\em Journal of Algebraic Combinatorics}, 56(4):1135--1162, Dec 2022.

\bibitem{GPTVW24}
S.~R. Ghorpade, R.~Pratihar, T.~H. Randrianarisoa, H.~Verdure, and G.~Wilson.
\newblock Homotopy type of shellable $q$-complexes and their homology groups, 2024.
\newblock arXiv:2403.07102.

\bibitem{GSSV}
S.~R. Ghorpade, A.~R. Shastri, M.~K. Srinivasan, and J.~K. Verma.
\newblock {\em (Eds.): Combinatorial Topology and Algebra}.
\newblock RMS Lecture Notes Series, Ramanujan Mathematical Society, Mysore, 2013.

\bibitem{Hatcher}
A.~Hatcher.
\newblock {\em Algebraic topology}.
\newblock Cambridge University Press, Cambridge, 2002.

\bibitem{HH02}
J.~Herzog and T.~Hibi.
\newblock Discrete polymatroids.
\newblock {\em Journal of Algebraic Combinatorics}, 16(3):239--268, Nov 2002.

\bibitem{HP06}
J.~Herzog and D.~Popescu.
\newblock Finite filtrations of modules and shellable multicomplexes.
\newblock {\em Manuscripta mathematica}, 121(3):385--410, Nov 2006.

\bibitem{Jah07}
A.~S. Jahan.
\newblock Prime filtrations of monomial ideals and polarizations.
\newblock {\em Journal of Algebra}, 312(2):1011--1032, 2007.

\bibitem{JP18}
R.~Jurrius and R.~Pellikaan.
\newblock Defining the $q$-analogue of a matroid.
\newblock {\em Electron. J. Combin.}, 25(3):3--2, 2018.

\bibitem{Sch}
S.~Ludwig.
\newblock {\em Theorie der vielfachen Kontinuit{\"a}t}, pages 167--387.
\newblock Springer Basel, Basel, 1950.

\bibitem{McCord}
M.~C. McCord.
\newblock Singular homology groups and homotopy groups of finite topological spaces.
\newblock {\em Duke Math. J.}, 33:465--474, 1966.

\bibitem{Oxley}
J.~G. Oxley.
\newblock {\em Matroid Theory (Oxford Graduate Texts in Mathematics)}.
\newblock Oxford University Press, Inc., USA, 2006.

\bibitem{PPR23}
A.~Panja, R.~Pratihar, and T.~H. Randrianarisoa.
\newblock Some matroids related to sum-rank metric codes.
\newblock {\em Discrete Mathematics}, 346(4):113301, 2023.

\bibitem{Porvan}
J.~S. Provan.
\newblock Decompositions, shellings, and diameters of simplicial complexes and convex polyhedra.
\newblock {\em Thesis, Cornell University, Ithaca, N. Y.}, 1977.

\bibitem{Rota71}
G.~C. Rota.
\newblock On the combinatorics of the euler characteristic.
\newblock {\em In: L. Mirsky (ed.) Studies in Pure Mathematics (presented to Richard Rado), pp. 221–233. Academic Press, London (1971) [Reprinted in: J. P. S. Kung (Ed.), Gian-Carlo Rota on Combinatorics: Introductory Papers and Commentaries, Birkhäuser Boston}, 1995.

\bibitem{Mur11}
M.~Satoshi.
\newblock Spheres arising from multicomplexes.
\newblock {\em Journal of Combinatorial Theory, Series A}, 118(8):2167--2184, 2011.

\bibitem{Stan}
R.~P. Stanley.
\newblock {\em Combinatorics and Commutative Algebra}, volume 2nd Ed.
\newblock Birkhäuser Boston, 1966.

\bibitem{Yuz87}
S.~Yuzvinsky.
\newblock Cohen-macaulay rings of sections.
\newblock {\em Advances in Mathematics}, 63(2):172--195, 1987.

\end{thebibliography}

\end{document}